\newtheorem{theorem}{Theorem}[section]
\newtheorem{corollary}[theorem]{Corollary}
\newtheorem{proposition}[theorem]{Proposition}
\theoremstyle{definition}
\newtheorem{example}{Example}[section]
\theoremstyle{definition}
\newtheorem{remark}{Remark}[section]
\theoremstyle{definition}
\newtheorem{definition}{Definition}[section]
\newcommand{\defeq}{\vcentcolon=}
\newcommand*\patchAmsMathEnvironmentForLineno[1]{%
  \expandafter\let\csname old#1\expandafter\endcsname\csname #1\endcsname
  \expandafter\let\csname oldend#1\expandafter\endcsname\csname end#1\endcsname
  \renewenvironment{#1}%
     {\linenomath\csname old#1\endcsname}%
     {\csname oldend#1\endcsname\endlinenomath}}%
\newcommand*\patchBothAmsMathEnvironmentsForLineno[1]{%
  \patchAmsMathEnvironmentForLineno{#1}%
  \patchAmsMathEnvironmentForLineno{#1*}}%
\numberwithin{equation}{section}
\numberwithin{table}{section}
\numberwithin{figure}{section}
\title{Boundary Treatment and Multigrid Preconditioning for Semi-Lagrangian Schemes Applied to Hamilton-Jacobi-Bellman Equations}
\author{
C.~Reisinger\thanks{Mathematical Institute, University of Oxford, Oxford, OX2 6GG, {\tt reisinge@maths.ox.ac.uk}.}
~and J. Rotaetxe Arto\thanks{Mathematical Institute, University of Oxford, Oxford, OX2 6GG, {\tt rotaetxe@maths.ox.ac.uk}. This author was partially supported by the \textit{Programa de Formaci\'{o}n de Investigadores del DEUI del Gobierno Vasco}.}
}
\begin{document}

\maketitle

\begin{abstract}
We analyse two practical aspects that arise in the numerical solution of Hamilton-Jacobi-Bellman (HJB) equations by a particular class of monotone approximation schemes known as semi-Lagrangian schemes. These schemes make use of a wide stencil to achieve convergence and result in discretization matrices that are less sparse and less local than those coming from standard finite difference schemes. This leads to computational difficulties not encountered there.
In particular, we consider the overstepping of the domain boundary and analyse the accuracy and stability of stencil truncation.
This truncation imposes a stricter CFL condition for explicit schemes in the vicinity of boundaries than in the interior, such that implicit schemes become attractive.
%Moreover,
 %the application of multigrid methods to efficiently solve the corresponding non-linear system of equations. We . For the second one, 
We  then study the use of geometric, algebraic and aggregation-based multigrid preconditioners to solve the resulting discretised systems from implicit time stepping schemes efficiently. Finally, we illustrate the performance of these techniques numerically for benchmark test cases from the literature. %and the pricing of European options under uncertain volatility.
\end{abstract}

\noindent
{\bf Key words:} fully non-linear PDEs, monotone 
approximation schemes, wide stencils, semi-Lagrangian schemes, multigrid \\

\noindent
{\bf AMS subject classification: 65M06, 65M12,  65M55,  49L25,  93E20}

\section{Introduction}

%{\color{LimeGreen} \sc{Check AMS classification and keywords}}

We consider semi-Lagrangian schemes, as described in \cite{camilli1995approximation, debrabant2013semi}, for the numerical approximation of solutions to the %following 
Hamilton-Jacobi-Bellman (HJB) equation
\begin{align}
\label{eq:1.1} u_t - \inf_{\alpha \in \mathcal{A}} \left\lbrace L^{\alpha}[u](t, x) + c^{\alpha}(t, x) u(t,x) + f^{\alpha}(t, x) \right\rbrace &= 0, & (t,x) \in %\text{in } 
(0, T] \times \Omega, \\
\label{eq:1.2}  u(0, x) &= g(x), %&\text{in } 
& x \in \bar \Omega,  \\
\label{eq:1.3} u(t, x) &= \psi(x), %&\text{ on }  &
& (t,x) \in (0, T] \times \partial \Omega,
\end{align}
where $\Omega$ is a domain, $Q_T \defeq (0, T] \times \bar{\Omega}$ with $\bar{\Omega} \defeq \Omega \cup \partial \Omega \subseteq \mathbb{R}^d$, $\mathcal{A}$ is a compact set,
\begin{align} \label{eq:def_L}
L^{\alpha}[u](t, x) = \text{tr}[a^{\alpha}(t, x) D^2u(t, x)]	+ b^{\alpha}(t,x)Du(t,x)
\end{align}
is a second order differential operator, and $\psi$ and $g$ %is the operator determining 
are the Dirichlet and initial conditions.% applied on the boundary of the spatial domain.

The coefficients $a^{\alpha} = \frac{1}{2} \sigma^{\alpha} \sigma^{\alpha, T}$, $b^{\alpha}$, $c^{\alpha}$, $f^{\alpha}$, the initial data $g$ and the boundary conditions $\psi$ take their values, respectively, in $\mathbb{S}^d$, the space of $d \times d$ symmetric matrices, $\mathbb{R}^d$, $\mathbb{R}$, $\mathbb{R}$, $\mathbb{R}$, and $\mathbb{R}$, and $\sigma^{\alpha} \in \mathbb{R}^{d \times P}$ such that $a^{\alpha}$ is positive semi-definite. 
We also assume the usual well-posedness conditions on the PDE coefficients, i.e.\ Lipschitz continuous in $x$ uniformly in $\alpha$, Hölder continuous with exponent $\frac{1}{2}$ in time and continuous in $\alpha$ for each $(t, x) \in Q_T$ \cite{LionsControl2}.
The relevant notion of solution for this type of non-linear equations is that of viscosity solutions \cite{crandall1992user} and the above conditions guarantee existence and uniqueness.

In general, the viscosity solution to \eqref{eq:1.1}--\eqref{eq:1.3} is unknown, thus it is necessary in practice to  compute approximations numerically. 
Sufficient conditions for a numerical scheme to converge to the unique viscosity solution of \eqref{eq:1.1}--\eqref{eq:1.3} were proved by Barles and Souganidis \cite{barles1991convergence} in terms of consistency, $L^\infty$-stability and monotonicity. We restrict our attention to finite difference discretizations of the differential operator \eqref{eq:def_L}. 

The requirement of %the discretization to be 
monotonicity drastically affects the properties and construction of finite difference schemes. Theorem 4 in \cite{oberman2006convergent} proves that local monotone discretizations have at most first order for first-order equations and second order for second-order equations. 
What is more, standard fixed stencil methods are monotone only under restrictions on the diffusion matrix, such as diagonal dominance \cite{debrabant2013semi, forsythVetzal2012}. 
Results from \cite{crandallLionsConvergentSchemes, MotzkinWasow} further illustrate the limitations of %monotone fixed stencil finite difference 
such methods for the monotone approximation of second order derivatives.

This implies that generally approximations have to be non-local on the discrete level, i.e.\ the distance between mesh points involved in the scheme at a given point grows in relation to the mesh width as the mesh is refined.
%the bandwidth of discretisation matrices grows.  
Such schemes are referred to as wide stencils. % with arbitrary diffusion matrices. 
For general diffusion matrices, first order accurate wide stencils of the type considered here have been proposed in \cite{camilli1995approximation, debrabant2013semi}, and a mixed fixed- and wide-stencil scheme in \cite{ma2014unconditionally}.
%Wide stencils are finite difference methods for which the length of the stencil increases as the grid is refined.
%, making the schemes non-local at the discrete level.

In this article, we analyse two issues arising in practice when numerically solving \eqref{eq:1.1}--\eqref{eq:1.3} using the class of schemes described in \cite{menaldi1989some, camilli1995approximation, debrabant2013semi} to discretize the second order differential operator \eqref{eq:def_L}. This approximation combines wide stencils in the directions determined by the columns of the diffusion matrix $\sigma^\alpha$ and the drift $b^\alpha$, together with (linear) interpolation. 
Following the notation in \cite{debrabant2013semi}, we write the matrix $\sigma^\alpha \in \mathbb{R}^{d \times P}$ as $(\sigma^\alpha_1, \sigma^\alpha_2, \ldots, \sigma^\alpha_P)$, where $\sigma^\alpha_p \in \mathbb{R}^d$ for $p \in \{1, 2, \ldots, P\}$ denotes the $p$-th column of $\sigma^\alpha$, and observe that for $k > 0$ and any smooth function $\phi$,
\begin{align}
\label{eq:D2approx}
\frac{1}{2} \text{tr} \left[ \sigma^\alpha \sigma^{\alpha ~ T} D^2 \phi(x) \right] &= \frac{1}{2} \sum_{p = 1}^P  \frac{\phi (x + k \sigma^\alpha_p ) -2 \phi(x) + \phi (x - k \sigma^\alpha_p) }{k^2} + \mathcal{O}(k^2), \\
\label{eq:D1approx}
b^\alpha D\phi(x) &= \frac{\phi(x + k^2 b^\alpha ) - \phi (x)}{k^2} + \mathcal{O}(k^2),
\end{align}
where $\mathcal{O}(k^2)$ is the local truncation error of the finite difference and for compactness we write $b^\alpha \equiv b^\alpha(t, x)$ and $\sigma^\alpha \equiv \sigma^\alpha(t, x)$. As these approximations will be used for points lying on a discrete spatial grid $\Omega_{\Delta x}$
with nodes $\{x_j: 1 \le j \le N\}$, the displaced points $x + k^2 b^\alpha$, $x \pm k \sigma^\alpha_p$ do not generally coincide with nodes of $\Omega_{\Delta x}$. Therefore, $\phi$ is replaced by an interpolant $\mathcal{I}_{\Delta x} \phi$ on that grid. We restrict our attention to linear interpolants, defined by the standard piecewise multilinear non-negative basis functions $\{w_j(\cdot): 1 \le j \le N\}$ associated with the mesh nodes, such that for any function $\phi$ %, there is a set of non-negative weight functions $\{w_j (x)\}_j$ such that
\begin{align} \label{eq:monotone_int}
(\mathcal{I}_{\Delta x} \phi) (x) = \sum_{j\in \mathcal{N}(x)} \phi(x_j) w_j(x),
\end{align}
for all $x \in \Omega$, $x_j \in \Omega_{\Delta x}$, where 
$\mathcal{N}(x)$ is the set of neighbours of $x$ on the mesh $\Omega_{\Delta x}$, i.e.\ the mesh points with non-zero interpolation weight.
The resulting scheme is referred to as the Linear Interpolation Semi-Lagrangian (LISL) scheme.

It is shown in \cite{debrabant2013semi} that the leading order terms of the local truncation error are proportional to $k^2$ and $\frac{\Delta x^2}{k^2}$, where the last quantity corresponds to the linear interpolation error in the finite difference formulae (\ref{eq:D2approx}) and (\ref{eq:D1approx}) by replacing $\phi$ by its interpolant. Therefore, by choosing $k = \sqrt{\Delta x}$, the resulting scheme is locally of first order in $\Delta x$.

%\begin{align}
%\label{eq:D2approx} \frac{1}{2} \text{tr} \left[ \sigma \sigma^\top D^2 \phi(x) \right] &\approx \frac{1}{2} \sum_{m = 1}^P  \frac{(\mathcal{I}_{\Delta x}\phi)(x + k \sigma_m ) -2 \phi(x) + (\mathcal{I}_{\Delta x}\phi)(x - k \sigma_m) }{k^2}, \\
%\label{eq:D1approx} b D\phi(x) &\approx \frac{1}{2} \frac{(\mathcal{I}_{\Delta x}\phi)(x + k^2 b ) -2 (\mathcal{I}_{\Delta x}\phi)(x) + (\mathcal{I}_{\Delta x}\phi)(x + k^2 b) }{k^2}.
%\end{align}
%Writing the approximation of the first derivative in this way is helpful to present some of the approximation examples we will consider later on.

Following the notation in \cite{debrabant2013semi}, the LISL finite difference approximations for the differential operator in \eqref{eq:def_L} can be expressed as 
\begin{align} \label{eq:FDscheme}
L_{\Delta x}^{\alpha}[\mathcal{I}_{\Delta x}\phi](t, x) \defeq \sum_{p =1}^M \frac{(\mathcal{I}_{\Delta x}\phi)(t, x + y_{p}^{\alpha, +}(t, x)) - 2 (\mathcal{I}_{\Delta x}\phi)(t,x) + (\mathcal{I}_{\Delta x}\phi)(t, x + y_{p}^{\alpha, -}(t, x))}{2 \Delta	x},
\end{align}
for $x \in \Omega_{\Delta x}$, and some $M \geq 1$. %The functions $y_{\cdot}^{\alpha, \pm}(t, x)$ determine the stencil of the scheme at $(t,x)$.

Different schemes can be obtained depending on the values taken by $M$ and $y_{p}^{\alpha, \pm}(t, x)$. 
In particular, \cite{debrabant2013semi} discusses the following three schemes:
\paragraph{Examples of LISL schemes.}
\label{sec:LISLexamples}
 % to discretize the second order differential operator \eqref{eq:def_L}:
\begin{enumerate}
\item 
{\bf Scheme 1}: The approximation of Camilli and Falcone \cite{camilli1995approximation}, corresponding to $y^{\alpha, \pm}_{p} =  \pm \sqrt{\Delta x} \sigma^{\alpha}_{p} + \frac{\Delta x}{P} b^{\alpha}$ and $M = P$.
\item 
{\bf Scheme 2}: The approximation in \cite{debrabant2013semi}, corresponding to $y^{\alpha, \pm}_{p} =  \pm \sqrt{\Delta x} \sigma^{\alpha}_{p}$ for $p \leq P$, $y^{\alpha, \pm}_{P+1} = \Delta x b^{\alpha}$, and $M = P + 1$.
\item 
{\bf Scheme 3}:
A more efficient version of the Camilli-Falcone approximation, corresponding to $y^{\alpha, \pm}_{p} =  \pm \sqrt{\Delta x} \sigma^{\alpha}_{p}$ for $p < P$, $y^{\alpha, \pm}_{P} =  \pm \sqrt{\Delta x} \sigma^{\alpha}_{P} + \Delta x b^{\alpha}$, and $M = P$.
\end{enumerate}

%{\color{LimeGreen} \sc{Need to present the different approximations?}} 

The authors show that this family of discretizations of \eqref{eq:def_L} is consistent and monotone. Monotonicity of the scheme is fulfilled as the discrete approximation $L_{\Delta x}^{\alpha}\left[ \mathcal{I}_{\Delta x} \phi \right]$ is the composition of monotone finite differences and a monotone interpolation operation. Once discretized in space, the final scheme arises from discretising in time using the standard $\theta$-time stepping scheme for $\theta \in [0, 1]$, where $\theta = 0$ corresponds to the explicit Euler time stepping and $\theta = 1$ to the implicit case, on a time grid represented by a strictly increasing sequence of points $\{t_n\}_{n = 0}^{N_t+ 1}$ with $t_0 = 0$, $t_{N_t+1} = T$, and $\Delta t_n \defeq t_n - t_{n-1} \leq \Delta t$ for all $n$. The scheme being monotone, it can be written as described in the following definition, where for any grid function $V:\{t_n\}_{n = 0}^{N_t+ 1} \times \Omega_{\Delta x} \to \mathbb{R}$, $V^n_i \equiv V(t_n, x_i)$.

\begin{definition}[Equation (4.1) in \cite{debrabant2013semi}] 
\label{def:positiveType}
A scheme is said to be of positive type, if it can be written as
\begin{align} \label{eq:positiveScheme}
\max_{\alpha \in \mathcal{A}} \left\lbrace  \mathcal{B}^{\alpha, n,n}_{j, j} U^n_j - %\sum_{\scriptsize \begin{array}{c} i=1 \\ i \neq j \end{array}}^N  
\sum_{ i \neq j}
\mathcal{B}^{\alpha, n,n}_{j, i} U^n_i - \sum_{i=1}^N  \mathcal{B}^{\alpha, n,n-1}_{j, i} U^{n-1}_i  - F^{\alpha, n - 1 + \theta}_j   \right\rbrace = 0,
%\quad
\end{align}
for $ j = 1,\ldots, N$, on the discrete domain $\{t_n\}_{n = 0}^{N_t+ 1} \times \Omega_{\Delta x}$, where $U^n_i$ is the numerical solution at node $(t_n, x_i)$ and all the coefficients $\mathcal{B}$ are non-negative.
\end{definition}

For the convenience of the reader, we reproduce the expressions for $\mathcal{B}^{\alpha, n, \cdot}_{ j, \cdot}$ of the LISL schemes as in \cite{debrabant2013semi}, for all $1\le i \neq j \le N$, $x_i,x_j \notin \partial \Omega$,
%\begin{align*}
%&\mathcal{B}^{\alpha, n, n}_{j, j} = 1 + \theta  \Delta t_n \left( \frac{M}{2 \Delta x} - l^{\alpha, n}_{j, j} - c^{\alpha, n-1+\theta}_j \right), \\
%&\mathcal{B}^{\alpha, n, n-1}_{j, j} = 1 - (1 - \theta) \Delta t_n \left( \frac{M}{2 \Delta x} - l^{\alpha, n-1}_{j, j} - c^{\alpha, n-1+\theta}_j \right), \\
%&\mathcal{B}^{\alpha, n, n}_{j, i} = \theta  \Delta t_n  \, l^{\alpha, n}_{j, i}, \qquad \mathcal{B}^{\alpha, n, n-1}_{j, i} = (1-\theta)  \Delta t_n \, l^{\alpha, n-1}_{j, i},
%\end{align*}
\begin{align*}
&\mathcal{B}^{\alpha, n, n}_{j, j} = 1 + \theta  \Delta t_n \left( \frac{M}{2 \Delta x} - l^{\alpha, n}_{j, j} - c^{\alpha, n-1+\theta}_j \right), 
&& \mathcal{B}^{\alpha, n, n}_{j, i} = \theta  \Delta t_n  \, l^{\alpha, n}_{j, i},
\\
&\mathcal{B}^{\alpha, n, n-1}_{j, j} = 1 - (1 - \theta) \Delta t_n \left( \frac{M}{2 \Delta x} - l^{\alpha, n-1}_{j, j} - c^{\alpha, n-1+\theta}_j \right), 
&&\mathcal{B}^{\alpha, n, n-1}_{j, i} = (1-\theta)  \Delta t_n \, l^{\alpha, n-1}_{j, i},
\end{align*}
where $c^{\alpha, n-1+\theta}_j = c^\alpha(t_{n-1}+\theta \Delta t,x_j)$ and
\begin{align*}
l^{\alpha, n}_{ j, i} = \sum^{M}_{p=1} \frac{w_{i} (x_j + y^{\alpha, +}_{p}(t_{n}, x_j)) +  w_{i} (x_j + y^{\alpha, -}_{p} (t_{n}, x_j))}{2 \Delta x}.
\end{align*}

%{\color{blue} Repeated ideas in the next 2 paragraphs}

%The notion of positive schemes is discussed and shown to satisfy a discrete maximum principle in \cite{MotzkinWasow}. For such schemes convergence to the viscosity solution can be shown using the Barles and Souganidis framework \cite{barles1991convergence}. To compute the numerical approximation $U$ to the viscosity solution $u$ we need to solve \eqref{eq:positiveScheme}, 

The schemes described above have a wide stencil as the length of the stencil, being proportional to the ratio $k/\Delta x \sim 1/\sqrt{\Delta x}$,
%$\frac{k}{\Delta x} = \frac{1}{\sqrt{\Delta x}}$
tends to $\infty$ as $\Delta x \to 0$. Hence, when applied on a bounded discrete grid, the stencil will generally exceed the domain for points close to its boundary. As discussed in \cite{debrabant2013semi}, the overstepping may pose a problem depending on the equation and the type of boundary conditions imposed. We consider Dirichlet boundary conditions here.

Our first goal is to present and analyse a modification of the LISL scheme to deal with overstepping for problems on bounded domains with Dirichlet boundary conditions, and general drift and diffusion coefficients. We describe how to truncate the LISL stencil so that the truncation remains consistent and monotone. 
%We also analyse how the truncation affects the stability of the LISL scheme. 
We prove that the resulting stencil for Scheme 2 above is of positive type (as per Definition \ref{def:positiveType}), and since the coefficients $\mathcal{B}$ in (\ref{eq:positiveScheme}) do not depend on $U$, it is also monotone. This is not the case for Schemes 1 and 3. 
We also observe that the truncation has both local and global impacts on the properties of the scheme. 
Locally, the modification of the scheme leads to a loss of accuracy of half an order in the consistency error, i.e.\ $\mathcal{O}(\sqrt{\Delta x})$ instead of $\mathcal{O}(\Delta x)$, % This reduction is 
due to the loss of symmetry. % in the finite difference approximation. 
We compare the accuracy of the truncation with extrapolations of the boundary conditions by way of numerical tests for benchmark problems.
As the mesh points requiring truncation of the scheme are restricted to an $\mathcal{O}(\sqrt{\Delta x})$ layer at the boundary, convergence rates close to $\mathcal{O}(\Delta x)$ are observed empirically for the new scheme. 
The truncation has a global effect in the sense that it modifies the CFL condition of explicit schemes by at least half an order, from $\Delta t = \mathcal{O}(\Delta x)$ to $\Delta t = \mathcal{O}(\Delta x^{3/2})$.
As the empirical error is $\mathcal{O}(\Delta t) + \mathcal{O}(\Delta x)$ for fully implicit schemes, the computationally most efficient choice is $\Delta t \sim \Delta x$,
outside the stability region of explicit schemes.

The second goal is therefore the use of implicit schemes and the efficient solution of the discrete system \eqref{eq:positiveScheme} using multigrid preconditioning. 
%For $\theta = 0$ this is straightforward, but there a CFL condition dictates the choice of small time steps, exacerbated by the truncation of the  stencil near the boundary, resulting in an inefficient overall algorithm.
For $\theta \neq 0$, the coupling of the optimal control and the coefficients makes \eqref{eq:positiveScheme} a non-linear system of algebraic equations,
% We know that for every value of the control $\alpha$ the system matrix associated to a positive scheme is an $M$-matrix, 
%This system is non-linear due to the coupling between the control and the coefficients of the equation. At each time step we will need to find the vector of unknowns $X$ solving non-linear systems of algebraic equations of the following form
\begin{align} \label{eq:nonlinear_sys}
\max_{\alpha \in \mathcal{A}} \left(A_i^\alpha X - F_i^\alpha \right) = 0, \qquad i=1,\ldots, N, %:= |\Omega_{\Delta x}|,
\end{align}
where $A_i^\alpha$ is the $i$-th row of a matrix $A^\alpha$ with elements $A_{i,j}^\alpha$, $i,j=1,\ldots, N$, and control $\alpha \in \mathcal{A}$. Comparing with (\ref{eq:positiveScheme}),
$A_{i,j}^\alpha = \mathcal{B}^{\alpha, n, n}_{i, j}$, $F_i^\alpha = F^{\alpha, n - 1 + \theta}_i$, and $X= (X_i) = (U^n_i)$ is the solution vector for the $n$-th time step. The maximisation over $\alpha$ in (\ref{eq:nonlinear_sys}) is row-wise and usually done by linear search.
By construction of the LISL scheme, $A^\alpha$ is an M-matrix with non-negative row sum. 
Therefore, following results in \cite{bokanowski_Howard}, we can use policy iteration to compute $U$. Then, within each policy iteration, a linear system
$A_i^{\alpha_i} X = F_i^{\alpha_i}$, $i=1,\ldots, N$, with fixed control vector $(\alpha_i)_{1\le i \le N}$ has to be solved. 
We find (in contrast to \cite{ma2014unconditionally}) that this last step is the computationally most costly part of the overall algorithm if direct linear solvers or standard iterative solvers are used.\footnote{Which of the two steps is more costly depends crucially on the type of control problem. The optimisation step is typically fast if the control is taken from a finite set, if the local control problem is analytically solvable (e.g., quadratic or of `bang-bang'-type), or if the coefficients are a smooth convex function of the control, such that standard Newton-type methods can be used. It will be more costly, if the optimal control has to be approximated by exhaustive search over a discretised control set, especially if the dimension of the control space is higher than the spatial dimension of the PDE. In the examples considered in this paper, the control is scalar and we optimise by linear search over a one-dimensional control mesh.
}
We therefore study multigrid preconditioners. (See Table \ref{Tab:solverpercentage}.)

In the literature on multigrid for HJB equations, two main approaches are observed: on the one hand, multigrid is applied directly to the non-linear problem, as in \cite{BlobetaHoppe89, hanWanMGHJBI, hoppeMGHJB}; and on the other hand, multigrid is applied to a linearised problem, as in \cite{akian1995finite}. 
In particular, \cite{BlobetaHoppe89, hoppeMGHJB} provide the first multigrid algorithms for HJB equations and prove convergence,
while
%the authors in 
\cite{hanWanMGHJBI} presents a novel smoother for HJB equations based on damped value iteration \cite{kushner2001numerical}.
These articles have in common the use of standard fixed stencil finite difference approximations %to discretize the second order differential operator \eqref{eq:def_L} 
and the use of a geometric structure when building the hierarchy of multigrid subspaces. 
%However, this approach is not adequate in our case due to the wide stencil. 
%On the other hand, \cite{BlobetaHoppe89, hoppeMGHJB} solve \eqref{eq:nonlinear_sys} at the coarsest level and use this to correct an approximate solution at the finest level.
%; indeed, the stencil changes depending on the refinement parameter of the grid and, as we will see, geometric multigrid is ill-fitted for wide stencil discretizations. 
%Even for simple linear problems, changes in the value of the coefficients affect the performance of standard geometric multigrid cycles, as seen in Section \ref{sec:NE_GMG}.

The novelty of this article is to study the application of multigrid preconditioning to a wide stencil discretization.
We will demonstrate, both by Fourier analysis of a model problem and by numerical tests in a more complex application, that standard geometric multigrid does not give mesh-size independent convergence.

We then investigate algebraic multigrid methods.
The basis for the specific algorithm we use was introduced in \cite{notay2010aggregation} for linear elliptic PDEs. It empirically showed that ``aggregation based methods could yield robust\footnote{In this context, a robust method is referred to as one showing good performance for a large range of problems without changing the smoother.} and convergent schemes if used as preconditioners of a Krylov method, and were part of an enhanced multigrid cycle, not simple V- or W-cycles" as considered in \cite{stubenAMGreview}. By enhanced multigrid cycles, the authors refer to recursive schemes in which at each coarse level the solution to the residual equation is computed using a number of Krylov subspace iterations as in \cite{notay2008krylov} or with a semi-iterative method based on Chebyshev polynomials called the AMLI cycle, see Section 5.6 of \cite{vassilevski2008multilevel}. The aggregates were formed using heuristic criteria following coupling in the strongest direction.
%, and no theoretical convergence proofs of the scheme as a whole were provided.

In \cite{napov2012AGMGguaranteed} the authors introduced an aggregation-based multigrid method with guaranteed convergence rate for symmetric M-matrices with non-negative row sum. 
%The key to proving mesh independent multilevel convergence is the coarsening or aggregation algorithm.  
%This new coarsening algorithm is shown to relate the global convergence estimate to a local property of the aggregates that the authors refer to as the aggregate's quality. 
%The convergence factor being determined by the quality of the worst of the aggregates, the coarsening algorithm is designed to control the convergence rate, rather than the complexity of the overall multigrid scheme. 
A LISL discretization matrix is only symmetric in very specific cases with limited practical interest. For non-symmetric matrices, in \cite{notay2012aggregation} convergence of a simplified two-grid scheme using aggregation is proved for non-singular M-matrices with non-negative row and column sums. This requirement ensures that the symmetric part of the coefficient matrix $A$ given by $A + A^T$ meets the assumptions in \cite{napov2012AGMGguaranteed} and allows the use of its theoretically justified algorithms. 
We will derive conditions on the coefficients of the HJB equation such that this theory applies, and
show empirically that aggregation-based multigrid gives roughly mesh-size independent convergence. 

The rest of the article is organised as follows. Section \ref{sec:Boundary} discusses the truncation of the LISL scheme for points whose stencil exceeds the domain and compares its performance to na\"{\i}ve extrapolations of the boundary conditions. Section \ref{sec:MG} considers the application of three different multigrid methods to linear systems where the coefficient matrix arises from LISL discretizations. Section \ref{sec:End} contains the final remarks.

\section{Boundary treatment for the LISL scheme}
\label{sec:Boundary}

In this section, we analyse adaptations of Schemes 1--3 for initial-boundary value problems on bounded domains.
As described in the introduction, for points $x$ close to the boundaries of the domain, the stencil points $x + y^{\alpha, \pm}_{p} (t, x)$ in (\ref{eq:FDscheme}) generally do not lie in a mesh element. % for some $t, x, \alpha, p$. 
In the following, we therefore discuss the truncation of \eqref{eq:FDscheme} so that the resulting scheme remains monotone, consistent, and $L^{\infty}$-stable. The proposed truncation samples the boundary points on the straight lines defined by the point $x$ and $x + y^{\alpha, \pm}_{p}(t, x)$ and adjusts the corresponding finite difference weights for consistency. 
%In general, for a bounded spatial domain $\Omega \subset \mathbb{R}^d$ and $d \geq 2$, such a truncation can only be built for Example 2.

\subsection{Definition of truncated stencils}

We take $\Omega \subset \mathbb{R}^d$ for $d \geq 2$. 
We first outline how the method can be defined on a general domain with curved boundary, but later (especially in the numerical tests) focus for simplicity on rectangular domains.
We start with a Cartesian mesh on $\mathbb{R}^d$ with uniform mesh width $\Delta x$ and then choose $\Omega_{\Delta x}$ as all the points which lie inside $\Omega$.
See Figure \ref{fig:domain}.
\begin{figure}[htp]
  \centering
	\includegraphics[width=0.5\textwidth]{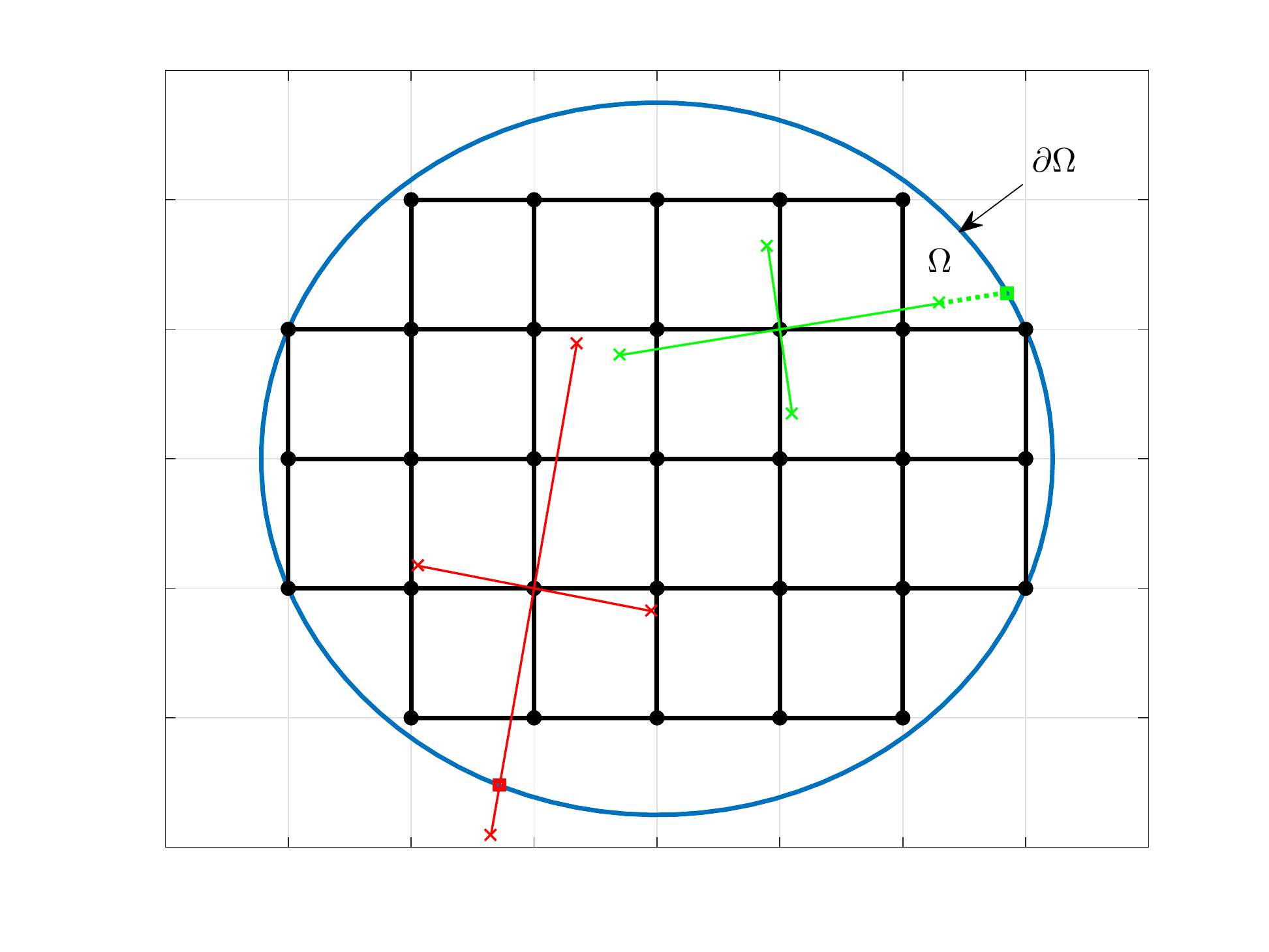}
  \caption{Truncation and extrapolation of the stencil for an elliptical domain and a mesh made of square cells. The modified stencil samples the domain boundary.} %, for consistency, the corresponding finite difference weights are adjusted.}
           \label{fig:domain}
\end{figure}

We now fix a mesh node $x \in \Omega_{\Delta x}$ .
There are two distinct situations where interpolation at the point $x + y^{\alpha, \pm}_{p}(t, x)$ as per (\ref{eq:FDscheme}) is not possible  for given $t, \alpha$ and $p$:
\begin{itemize}
\item[A.]
$x + y^{\alpha, \pm}_{p}(t, x) \notin \bar{\Omega}$ (bottom left in Fig.~\ref{fig:domain});
\item[B.]
$x + y^{\alpha, \pm}_{p}(t, x) \in \bar{\Omega}$, but the element it is contained in has vertices outside $\bar{\Omega}$ (top right).
\end{itemize}
We say the stencil ``oversteps''.
In such cases,
the objective is to find truncated or extended stencil vectors $\hat{y}^{\alpha, \pm}_{p}(t, x)$ and corresponding finite difference weights $A^\alpha_p\equiv A^\alpha_p(t, x)$ and $B^\alpha_p \equiv B^\alpha_p(t, x)$, such that $x + \hat{y}^{\alpha, \pm}_{p}(t, x) \in \partial \Omega$ and the truncated scheme
\begin{align} \label{eq:TruncScheme}
& \hat{L}_{\Delta x}^{\alpha}[\mathcal{I}_{\Delta x}\phi](t, x)  \defeq \nonumber \\
& \quad \sum_{p =1}^M \frac{A^\alpha_p (\mathcal{I}_{\Delta x}\phi)(t, x + \hat{y}_{p}^{\alpha, +}(t, x)) - (A^\alpha_p + B^\alpha_p) \phi(t, x) + B^\alpha_p (\mathcal{I}_{\Delta x}\phi)(t, x + \hat{y}_{p}^{\alpha, -}(t, x)) }{2 \Delta x}
\end{align}
is a consistent approximation of \eqref{eq:def_L} as $\Delta x \to 0$. If the stencil does not overstep, we have that $\hat{y}_{p}^{\alpha, \pm}(t, x) = y_{p}^{\alpha, \pm}(t, x)$ and $A^\alpha_p = B^\alpha_p = 1$. If it does, for any $t$ we define
%$\hat{y}^{\alpha, \pm}_{p} = \mu^\pm_p y_{p}^{\alpha, \pm}$ and $\mu^\pm_p \in [0, 1)$ is the smallest constant such that 
%$x + \mu^{\pm}_p y_{p}^{\alpha, \pm} \in \partial \Omega$.
\[
\hat{y}^{\alpha, \pm}_{p}(t, x) = \mu^{\alpha, \pm}_p(t, x) y_{p}^{\alpha, \pm}(t, x), \quad \text{ where } \quad \mu^{\alpha, \pm}_p(t, x) = \min \left \{ \mu \ge 0 \,: \,
x + \mu y_{p}^{\alpha, \pm}(t, x) \in \partial \Omega\right \}.
\]
In case {\rm A}, this means $\mu <1$, while in case {\rm B} we have $\mu >1$.

%We remark that the method proposed could also be used to extrapolate the boundary conditions for problems posed on irregular domains. This extrapolation has the same reduced consistency error as the truncation but does not modify the stability condition of the scheme. Figure \ref{fig:domain} represents the truncation and extrapolation of the stencil for an elliptical domain and a mesh with square cells. 

In the remainder of this section we restrict our attention to the truncation of the scheme on rectangular domains, in which case the elements of the Cartesian mesh cover exactly the domain and case {\rm B} does not occur.
Moreover, this means that interior mesh points cannot be arbitrarily close to the boundary, but are always at least $\Delta x$ away\footnote{This
can also be enforced in the general case by removing the outermost layer of cells, such that again a distance of $\Delta x$ between non-boundary mesh points and the domain boundary is ensured.}. This allows the derivation of CFL conditions for the explicit schemes as given below in Section \ref{subsec:properties}.

%in the rectangular case, whereas in the general case arbitrarily small time steps may be needed. This makes implicit schemes and the solution methods in Section \ref{sec:MG} even more relevant in the general case.

\subsection{Consistency conditions}
\label{sec:Consistency}

In the truncated scheme (\ref{eq:TruncScheme}) there are $M$ pairs of weights, which can be chosen freely, subject to positivity, in order to obtain a consistent scheme.
%The consistency conditions give for each pair
%%ach of which is the solution to 
%an overdetermined system of $(d^2 + 3d)/2$ equations, $d$ from the condition on the Jacobian and $(d^2 +d)/2$ from the condition on the Hessian. For each of these systems to have a unique solution, the $d$ conditions coming from the Jacobian need to be linearly dependent and so have to be those coming from the Hessian. 
As we will see below, this is only possible for Scheme 2. % yields systems with such properties. 

In the following, we denote $[[1, j]] \equiv [1, j] \cap \mathbb{Z}$ and for a vector $v \in \mathbb{R}^d$, $(v)_i$ denotes its $i$-th element. % and
As in the introduction, we have that  $b^{\alpha} \in \mathbb{R}^{d}$, and $\sigma^{\alpha} = (\sigma^{\alpha}_1, \ldots, \sigma^{\alpha}_p, \ldots, \sigma^{\alpha}_P) \in \mathbb{R}^{d \times P}$ where $\sigma^{\alpha}_p \in \mathbb{R}^d$ denotes the $p$-th column vector. 
For compactness, we omit the dependence of the coefficients and the stencil related functions with respect to the position, that is $b^{\alpha} \equiv b^{\alpha}(t, x)$, $\sigma^{\alpha}_p \equiv \sigma^{\alpha}_p(t, x)$, $y_{p}^{\alpha, \pm} \equiv y_{p}^{\alpha, \pm}(t, x)$ and $\mu_{p}^{\alpha, \pm} \equiv \mu_{p}^{\alpha, \pm}(t, x)$. 
We add a second subscript taking values 1, 2 or 3 to $A^\alpha_{p}$, $B^\alpha_{p}$ and $y_{p}^{\alpha, \pm}$ to make the discretization scheme explicit.

%\subsubsection{Truncation of the approximation by Camili-Falcone}
%
%The scheme given in example 1 yields the following system of equations. The subscripts of $\hat{y}^{\alpha, \pm}_{1, p}$ denote that approximation example 1 and $p$-th column of the matrix $\sigma^\alpha$ are being considered.

\begin{proposition}
The truncated version of Schemes 1 and 3 is generally not consistent.
\end{proposition}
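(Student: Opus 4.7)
The plan is to Taylor-expand the truncated scheme $\hat{L}_{\Delta x}^\alpha[\phi](t,x)$ for a smooth test function $\phi$, collect terms by order in $\sqrt{\Delta x}$, and read off the algebraic constraints that the weights $(A_p^\alpha, B_p^\alpha)$ must satisfy in order to reproduce $L^\alpha[\phi](x) = \tfrac12\sum_p (\sigma_p^\alpha)^T D^2\phi\,\sigma_p^\alpha + b^\alpha \cdot D\phi$ in the limit $\Delta x \to 0$. The key structural feature distinguishing Schemes 1 and 3 from Scheme 2 is that the unscaled stencil vectors $y_p^{\alpha,\pm}$ genuinely combine a $\sqrt{\Delta x}$-scale diffusion component $\pm\sqrt{\Delta x}\,\sigma_p^\alpha$ with a $\Delta x$-scale drift component. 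Under truncation $\hat{y}_p^{\alpha,\pm} = \mu_p^{\alpha,\pm}\,y_p^{\alpha,\pm}$ rescales both pieces by the \emph{same} scalar, so the two scales cannot be tuned independently by the two scalar weights $(A_p^\alpha, B_p^\alpha)$ assigned to that pair, and the resulting consistency system turns out to be over-determined.

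For Scheme 3, the cleanest argument is to concentrate on the $P$-th pair, which is the only one carrying the drift. Expanding $\phi$ to second order around $x$ and dividing by $2\Delta x$, three consistency conditions on $(A_P^\alpha, B_P^\alpha)$ emerge from the three leading contributions: the potential $\Delta x^{-1/2}$ blow-up in the direction $\sigma_P^\alpha$ must cancel, forcing $A_P^\alpha\mu_P^{\alpha,+} = B_P^\alpha\mu_P^{\alpha,-}$; the coefficient of $b^\alpha\cdot D\phi$ must equal one, giving $A_P^\alpha\mu_P^{\alpha,+} + B_P^\alpha\mu_P^{\alpha,-} = 2$; and the coefficient of $(\sigma_P^\alpha)^T D^2\phi\,\sigma_P^\alpha$ must equal $\tfrac12$, giving $A_P^\alpha(\mu_P^{\alpha,+})^2 + B_P^\alpha(\mu_P^{\alpha,-})^2 = 2$. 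The first two equations uniquely force $A_P^\alpha = 1/\mu_P^{\alpha,+}$ and $B_P^\alpha = 1/\mu_P^{\alpha,-}$; substitution into the third reduces to $\mu_P^{\alpha,+} + \mu_P^{\alpha,-} = 2$. Since the rectangular-domain hypothesis excludes case B and hence forces $\mu_p^{\alpha,\pm}\le 1$, with strict inequality for at least one sign exactly when truncation is triggered, this identity fails in the generic truncated case and proves inconsistency for Scheme 3.

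For Scheme 1, the drift $\tfrac{\Delta x}{P}b^\alpha$ appears in every pair, so the drift reproduction condition couples all the weights. Taking the generic setting $P = d$ with linearly independent $\sigma_1^\alpha,\dots,\sigma_P^\alpha$, the no-blow-up condition still decouples pairwise into $A_p^\alpha\mu_p^{\alpha,+} = B_p^\alpha\mu_p^{\alpha,-}$, and together with the per-pair diffusion match $A_p^\alpha(\mu_p^{\alpha,+})^2 + B_p^\alpha(\mu_p^{\alpha,-})^2 = 2$ it uniquely yields $A_p^\alpha = 2/[\mu_p^{\alpha,+}(\mu_p^{\alpha,+} + \mu_p^{\alpha,-})]$ and $B_p^\alpha = 2/[\mu_p^{\alpha,-}(\mu_p^{\alpha,+} + \mu_p^{\alpha,-})]$. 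Substituting into the remaining global drift reproduction condition $\sum_{p=1}^P (A_p^\alpha\mu_p^{\alpha,+} + B_p^\alpha\mu_p^{\alpha,-}) = 2P$ reduces it to $\sum_{p=1}^P 2/(\mu_p^{\alpha,+} + \mu_p^{\alpha,-}) = P$, which again holds only when every $\mu_p^{\alpha,\pm} = 1$.

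The main obstacle is handling the degenerate configurations where the over-determination collapses: when the $\sigma_p^\alpha$ are linearly dependent so that the no-blow-up constraint no longer splits pairwise, when $b^\alpha = 0$ so the drift condition becomes vacuous, or when one is willing to let $A_p^\alpha, B_p^\alpha$ carry $O(\sqrt{\Delta x})$ corrections that re-inject drift through the symmetric pairs at the cost of globally coupled, non-local weights. This is exactly what the qualifier ``generally'' guards against. The cleanest way to make the proof watertight is to exhibit a single explicit counterexample, e.g.\ a two-dimensional problem with non-degenerate $\sigma^\alpha$, non-zero $b^\alpha$, and a mesh node where exactly one sign triggers truncation (so $\mu_P^{\alpha,+}<1$, $\mu_P^{\alpha,-}=1$), and to verify directly that the associated $3\times 2$ linear system for $(A_P^\alpha, B_P^\alpha)$ has no solution, pinning down the claim for both Schemes 1 and 3.
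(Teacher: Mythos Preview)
Your argument is correct and follows the same overall strategy as the paper---Taylor-expand the truncated scheme and examine the resulting consistency constraints---but the execution differs. The paper's proof is a degree-of-freedom count: for Scheme~1 it writes down the first- and second-order consistency conditions and observes that there are $2|\mathcal{P}|$ free weights against $(d^2+3d)/2$ equations ($d$ from the gradient, $d(d+1)/2$ from the Hessian), hence an overdetermined system with no solution except in degenerate cases; it then remarks that the same obstruction applies to the $P$-th pair of Scheme~3. You instead solve the system explicitly: from the no-blow-up and diffusion conditions you pin down $A_p^\alpha, B_p^\alpha$ uniquely, substitute into the drift condition, and reduce everything to the scalar obstruction $\mu_P^{\alpha,+}+\mu_P^{\alpha,-}=2$ (Scheme~3) or $\sum_p 2/(\mu_p^{\alpha,+}+\mu_p^{\alpha,-})=P$ (Scheme~1), which fail whenever any $\mu_p^{\alpha,\pm}<1$. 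Your route is more explicit and in fact tighter---a dimension count alone does not rule out accidental solvability, whereas your reduction exhibits the precise algebraic incompatibility and shows exactly when it is triggered. The paper compensates by following its proof with concrete numerical examples (e.g.\ $x_0=(0,0)^T$, $\bar\Omega=[-5,1]^2$), which serve the same certifying role as the counterexample you propose in your final paragraph.
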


\begin{proof}
By Taylor expansion of a smooth test function we find that the consistency conditions for Scheme 1 are
\begin{align*}
\sum_{p \in \mathcal{P}} \left( A^\alpha_{1, p} (\hat{y}^{\alpha, +} _{1, p})_i + B^\alpha_{1, p} (\hat{y}^{\alpha, -}_{1, p})_i \right)
	&= 2 \Delta x \frac{|\mathcal{P}|}{P} (b^{\alpha})_i + o(\Delta x), \\
\sum_{p \in \mathcal{P}} \left( A^\alpha_{1, p} (\hat{y}^{\alpha, +}_{1, p})_{i_1} (\hat{y}^{\alpha, +}_{1, p})_{i_2} + B^ \alpha_{1, p} (\hat{y}^{\alpha, -}_{1, p})_{i_1} (\hat{y}^{\alpha, -}_{1, p})_{i_2} \right)
	&= 2\Delta x \sum_{p \in \mathcal{P}} (\sigma^{\alpha}_{p})_{i_1} (\sigma^{\alpha}_{p})_{i_2} + o(\Delta x),
	%+ 2 \Delta x^2 \frac{|\mathcal{P}|}{P^2} (b^{\alpha})_{i_1} (b^{\alpha})_{i_2},
\end{align*}
where $\mathcal{P} \subseteq [[1, P]]$ denotes the set of stencils overstepping the domain and $i, i_1, i_2 \in [[1, d]]$. 
%These equations only have to be satisfied up to a term $o(\Delta x)$, but the solvability does not change.

In Scheme 1, there are $2 |\mathcal{P}| \le 2 d$ variables, but $(d^2 + 3d)/2$ %The consistency conditions give for each pair
%ach of which is the solution to 
%an overdetermined system of $(d^2 + 3d)/2$ 
equations, $d$ from the condition on the Jacobian and $(d^2 +d)/2$ from the condition on the Hessian.
This overdetermined system has a solution only if there is linear dependence between the equations. Except for special cases, e.g.\ $|\mathcal{P}|=0$ or $\sigma^\alpha_p$ parallel to $b^\alpha$ for some $p$, this is not the case. Hence, in general the truncated Scheme 1 is not consistent.

%For the system to have a solution we require the existence of constants $\lambda^{\alpha, \pm}_p$ such that
%\begin{align*}
%\frac{(\hat{y}^{\alpha, \pm}_{1, p})_i}{(b^{\alpha})_i} = \lambda^{\alpha, \pm}_p,
%\end{align*}
%for all $(i, p) \in [[1, d]]\times \mathcal{P}$. This implies that the vectors $\{\sigma^{\alpha}_p\}_{p \in \mathcal{P}}$ are all parallel to $b^{\alpha}$. This is not the case in general. Alternatively, if $|\mathcal{P}| = (d^2 + 3d)/4$ then the system could have a solution, however usually $|\mathcal{P}| \leq d$. Hence, we conclude that in general the system above has no solution.
% for points whose stencil oversteps the domain (the system is overdetermined). 

We observe that the same principle applies to Scheme 3 for 
%the stencil for %the $P$-th addend, that is, the one having 
$y^{\alpha, \pm}_{3, P} = \pm \sqrt{\Delta x} \sigma_P^{\alpha} + \Delta x b^{\alpha}$.

%For example, consider $x_0 = (0, 0)^T$, $\bar{\Omega} = [-5, 1]^2$,
%\begin{align*}
%\sqrt{\Delta x} \sigma^\alpha(x_0) = \begin{pmatrix}
%2 & 0 \\
%0 & 1
%\end{pmatrix}, \text{and}
%\quad
%\Delta x b^\alpha(x_0) = \begin{pmatrix} 0 \\ 1 \end{pmatrix},
%\end{align*}
%then the truncated version of Example 1 is not consistent, but the one for Example 3 is. However, if $\Delta x b^\alpha(x_0) = ( 1, 1)^T$ then neither of them is consistent.
\end{proof}

For example, consider $x_0 = (0, 0)^T$, $\bar{\Omega} = [-5, 1]^2$, $\sqrt{\Delta x} \sigma^\alpha_1(x_0) = (2, 0)^T$, $\sqrt{\Delta x} \sigma_2^\alpha(x_0) = (0, 1)^T$, and $\Delta x b^\alpha(x_0) = (0, 1)^T$, then the truncated version of Scheme 1 is not consistent, but the one for Scheme 3 is. 
However, if $\Delta x b^\alpha(x_0) = ( 1, 1)^T$ then neither of them is consistent.

We conclude that for points whose stencil oversteps the boundary, the approximations of the first and second derivative should be considered separately, as done in Scheme 2.

\begin{proposition}
\label{pro:scheme2}
%For Example 2, %it is possible to truncate the stencil consistently. L
%let $n \in [[1, N]]$ and %let the weights for the truncated approximation to the first order derivative term in \eqref{eq:def_L} be
%\begin{align} \label{eq:DriftCoeff}
%A^\alpha_{2, P+1} = B^\alpha_{2, P+1} = \Delta x \frac{(b^{\alpha})_n}{(\hat{y}^{\alpha, \pm}_{2, P+1})_n},
%\end{align}
%where $\hat{y}^{\alpha, \pm}_{2, P+1} = \lambda \Delta x b^{\alpha}$ and $\lambda \in [0, 1)$ is the smallest constant such that $x + \lambda \Delta x b^{\alpha} \in \partial \Omega$,
%and %$p \in [[1, P]]$ denote one of the columns of $\sigma^{\alpha}$ and $n \in [[1, N]]$, then 
%%the weights for the truncated approximation to the $p$-th addend of the second order derivative in \eqref{eq:def_L} be given by
%\begin{align} \label{eq:ABcoeff}
%A^\alpha_{2, p} =  \frac{2 \Delta x (\sigma^{\alpha}_{p})^2_{n} }{(\hat{y}^{\alpha, +}_{2, p})_{n} ((\hat{y}^{\alpha, +}_{2, p})_{n} - (\hat{y}^{\alpha, -}_{2, p})_{n})}, \qquad B^\alpha_{2, p} = \frac{2 \Delta x (\sigma^{\alpha}_{p})^2_{n} }{(\hat{y}^{\alpha, -}_{2, p})_{n} ((\hat{y}^{\alpha, -}_{2, p})_{n} - (\hat{y}^{\alpha, +}_{2, p})_{n})},
%\end{align}
%where $\hat{y}^{\alpha, \pm}_{2, p} = \pm \mu^\pm_p \sqrt{\Delta x} \sigma^{\alpha}_p$ and $\mu^\pm_p \in (0, 1]$ are the smallest constants with $x \pm \mu^\pm_p \sqrt{\Delta x} \sigma^{\alpha}_p \in \partial \Omega$.
For Scheme 2 %it is possible to truncate the stencil consistently. L
and all $p \in [[1, P+1]]$, let %let the weights for the truncated approximation to the first order derivative term in \eqref{eq:def_L} be
$\mu^{\alpha, \pm}_{p} \in (0, 1]$ be the largest constant such that 
$x + \mu \ y_{2,p}^{\alpha, \pm} \in \bar{\Omega}$ for all $\mu \in [0,\mu^{\alpha, \pm}_{p}]$, and define
\begin{align}
\label{eqn_APp1}
A^\alpha_{2, P+1} = B^\alpha_{2, P+1} = \frac{1}{\mu^{\alpha, +}_{P+1}} \left(= \frac{1}{\mu^{\alpha, -}_{P+1}}\right),
%\Delta x \frac{(b^{\alpha})_n}{(\hat{y}^{\alpha, \pm}_{2, P+1})_n},
\end{align}
and, for $p \in [[1, P]]$,
%where $\hat{y}^{\alpha, \pm}_{2, P+1} = \lambda \Delta x b^{\alpha}$ and $\lambda \in [0, 1)$ is the smallest constant such that $x + \lambda \Delta x b^{\alpha} \in \partial \Omega$,
%$p \in [[1, P]]$ denote one of the columns of $\sigma^{\alpha}$ and $n \in [[1, N]]$, then 
%the weights for the truncated approximation to the $p$-th addend of the second order derivative in \eqref{eq:def_L} be given by
\begin{align}
\label{eqn_Ap}
A^\alpha_{2, p}  = \frac{2}{(\mu^{\alpha, +}_{p})^2 + \mu^{\alpha, +}_{p} \mu^{\alpha, -}_{p}}, \qquad B^\alpha_{2, p} = \;\;  \frac{2}{(\mu^{\alpha, -}_{p})^2 + \mu^{\alpha, -}_{p} \mu^{\alpha, +}_{p}}.
\end{align}
%where $\hat{y}^{\alpha, \pm}_{2, p} = \pm \mu^\pm_p \sqrt{\Delta x} \sigma^{\alpha}_p$ and $\mu^\pm_p \in (0, 1]$ are the smallest constants with $x \pm \mu^\pm_p \sqrt{\Delta x} \sigma^{\alpha}_p \in \partial \Omega$.

Then the scheme defined by (\ref{eq:TruncScheme}) is consistent unless both $\mu^{\alpha, +}_{p}, \mu^{\alpha, -}_{p} \sim O(\sqrt{\Delta x})$.
\end{proposition}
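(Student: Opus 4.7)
My plan is to show consistency by Taylor-expanding $\hat{L}^\alpha_{\Delta x}[\mathcal{I}_{\Delta x}\phi](t,x)$ for a smooth test function $\phi$ and verifying that, with the weights in \eqref{eqn_APp1}--\eqref{eqn_Ap}, the leading terms reproduce $\tfrac{1}{2}\text{tr}(\sigma^\alpha \sigma^{\alpha, T} D^2 \phi) + b^\alpha D\phi$ with a vanishing remainder. I would split the analysis into the pure-diffusion indices $1 \le p \le P$ and the drift index $p = P+1$, and decompose $\mathcal{I}_{\Delta x}\phi = \phi + (\mathcal{I}_{\Delta x}\phi - \phi)$ so that the finite-difference truncation error and the linear-interpolation error can be bounded separately.

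For $p = P+1$, the equality $y^{\alpha,+}_{2,P+1} = y^{\alpha,-}_{2,P+1} = \Delta x\, b^\alpha$ forces $\mu^{\alpha,+}_{P+1} = \mu^{\alpha,-}_{P+1}$, and \eqref{eqn_APp1} collapses the $(P{+}1)$-th term of \eqref{eq:TruncScheme} acting on $\phi$ to the one-sided difference $(\phi(x + \mu^{\alpha,+}_{P+1}\Delta x\, b^\alpha) - \phi(x))/(\mu^{\alpha,+}_{P+1}\Delta x)$, which Taylor-expands to $b^\alpha D\phi(x) + O(\Delta x)$ since $\mu^{\alpha,+}_{P+1} \le 1$. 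For $1 \le p \le P$, Taylor-expanding $\phi$ at $x \pm \mu^{\alpha,\pm}_p \sqrt{\Delta x}\,\sigma^\alpha_p$ and collecting terms, the key algebraic identities $A^\alpha_{2,p}\mu^{\alpha,+}_p = B^\alpha_{2,p}\mu^{\alpha,-}_p = 2/(\mu^{\alpha,+}_p + \mu^{\alpha,-}_p)$ and $A^\alpha_{2,p}(\mu^{\alpha,+}_p)^2 + B^\alpha_{2,p}(\mu^{\alpha,-}_p)^2 = 2$ follow directly from \eqref{eqn_Ap}. The first makes the first-derivative coefficient in the numerator vanish exactly, removing an otherwise $O(1/\sqrt{\Delta x})$ inconsistency arising from the broken symmetry, and the second yields the correct second-derivative coefficient so that summation over $p$ reproduces $\tfrac{1}{2}\text{tr}(\sigma^\alpha \sigma^{\alpha, T} D^2\phi)$ exactly. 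The leading Taylor remainder is $\tfrac{\mu^{\alpha,+}_p - \mu^{\alpha,-}_p}{6}\sqrt{\Delta x}\,(\sigma^\alpha_p \cdot \nabla)^3\phi = O(\sqrt{\Delta x})$ because $\mu^{\alpha,\pm}_p \in (0,1]$, and higher-order terms have the form $\frac{(\mu^{\alpha,+}_p)^{k-1} + (-1)^k (\mu^{\alpha,-}_p)^{k-1}}{k!(\mu^{\alpha,+}_p + \mu^{\alpha,-}_p)} \Delta x^{(k-2)/2}$, whose prefactor is uniformly bounded in $\mu^{\alpha,\pm}_p \in (0,1]$ and which therefore vanish as $\Delta x \to 0$.

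The subtle step, and in my view the main obstacle, is controlling the linear-interpolation error $E := \mathcal{I}_{\Delta x}\phi - \phi$. The standard estimate $|E(y)| \le C\Delta x^2 \|D^2\phi\|_\infty$ gives a contribution to $\hat{L}^\alpha_{\Delta x}[\mathcal{I}_{\Delta x}\phi](t,x)$ bounded by $\tfrac{C\Delta x^2}{2\Delta x}\sum_p (A^\alpha_{2,p} + B^\alpha_{2,p})$. From \eqref{eqn_Ap} one computes $A^\alpha_{2,p} + B^\alpha_{2,p} = 2/(\mu^{\alpha,+}_p \mu^{\alpha,-}_p)$, so the $p$-th diffusion stencil produces an interpolation error of order $\Delta x/(\mu^{\alpha,+}_p \mu^{\alpha,-}_p)$; this tends to zero if and only if $\mu^{\alpha,+}_p \mu^{\alpha,-}_p \gg \Delta x$, and the excluded case in which both $\mu^{\alpha,\pm}_p$ are $O(\sqrt{\Delta x})$ is precisely the regime in which this product is $O(\Delta x)$ and the amplified interpolation error fails to vanish. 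The drift contribution has $A^\alpha_{2,P+1} + B^\alpha_{2,P+1} = 2/\mu^{\alpha,+}_{P+1}$, and since interior nodes lie at distance at least $\Delta x$ from $\partial\Omega$ while $|y^{\alpha,\pm}_{2,P+1}| = O(\Delta x)$, the factor $\mu^{\alpha,+}_{P+1}$ is bounded below by a positive constant depending only on $\|b^\alpha\|_\infty$, so that term is harmless. Combining the $O(\sqrt{\Delta x})$ Taylor remainder with the interpolation bound yields the claimed consistency under the stated exclusion.
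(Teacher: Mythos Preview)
Your proposal is correct and follows essentially the same approach as the paper: Taylor-expand the truncated operator, verify that the weights \eqref{eqn_APp1}--\eqref{eqn_Ap} enforce exactly the first- and second-order consistency conditions (your identities $A^\alpha_{2,p}\mu^{\alpha,+}_p = B^\alpha_{2,p}\mu^{\alpha,-}_p$ and $A^\alpha_{2,p}(\mu^{\alpha,+}_p)^2 + B^\alpha_{2,p}(\mu^{\alpha,-}_p)^2 = 2$ are precisely the paper's equations \eqref{eq:AB1}--\eqref{eq:AB2} rewritten in the $\mu$-parametrisation), and then bound the interpolation contribution by $(\Delta x)^{-1}\sum_p(A^\alpha_{2,p}+B^\alpha_{2,p})\,\Delta x^2$, identifying $A^\alpha_{2,p}+B^\alpha_{2,p}=2/(\mu^{\alpha,+}_p\mu^{\alpha,-}_p)$ as the source of the failure when both $\mu^{\alpha,\pm}_p \sim \sqrt{\Delta x}$. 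Your treatment is in fact slightly more explicit than the paper's, as you compute the third- and higher-order Taylor remainders and check their uniform boundedness in $\mu^{\alpha,\pm}_p$, material the paper postpones to the subsequent corollary.
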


\begin{proof}
If the stencil oversteps, then the truncated stencil consists of the point at the intersection between the boundary $\partial \Omega$ and one of the segments $\{x, x+ \sqrt{\Delta x} \sigma^{\alpha}_p\}$, $\{x, x - \sqrt{\Delta x} \sigma^{\alpha}_p\}$, or $\{x, x+ \Delta x b^{\alpha}\}$. 
For each point $(t, x)$ Scheme 2 requires the calculation of at most $2P + 1$ different weights, i.e.\ $2P$ for the second order term and one for the first order term. 
For the latter we have that $\hat{y}^{\alpha, +}_{2, P+1} = \hat{y}^{\alpha, -}_{2, P+1}$, therefore $A^\alpha_{2, P+1}= B^\alpha_{2, P+1}$. 
Ignoring the interpolation error for the time being, the coefficients are obtained from the consistency conditions (up to a term $o(\Delta x)$), 
\begin{align}
\label{eq:AB_drift} (A^\alpha_{2, P+1} + B^\alpha_{2, P+1}) (\hat{y}^{\alpha, \pm}_{2, P+1})_i &= 2 \Delta x (b^{\alpha})_i, &  &\forall i \in [[1, d]],
\end{align}
for the first order term, and 
\begin{align}
\label{eq:AB1}A^{\alpha}_{2, p} (\hat{y}^{\alpha, +}_{2,p})_i + B^{\alpha}_{2, p} (\hat{y}^{\alpha, -}_{2, p})_i &= 0, &  &\forall i \in [[1, d]], \\
\label{eq:AB2}A^{\alpha}_{2, p} (\hat{y}^{\alpha, +}_{2, p})_{i_1} (\hat{y}^{\alpha, +}_{2, p})_{i_2} + B^{\alpha}_{2, p} (\hat{y}^{\alpha, -}_{2, p})_{i_1} (\hat{y}^{\alpha, -}_{2, p})_{i_2} &= 2 \Delta x (\sigma^{\alpha}_{p})_{i_1} (\sigma^{\alpha}_{p})_{i_2}, &  &\forall (i_1, i_2) \in [[1, d]]^2,
\end{align}
for the second order term.

By construction of the truncated stencil \eqref{eq:AB_drift} and \eqref{eq:AB1} are linearly dependent across $i$, and \eqref{eq:AB2} across $i_1$ and $i_2$, resulting in one (linearly independent) equation for the first order term weights and two for $A^\alpha_{2, p}$, $B^\alpha_{2, p}$,
with solutions given by
\begin{align} \label{eq:DriftCoeff}
A^\alpha_{2, P+1} = B^\alpha_{2, P+1} = \Delta x \frac{(b^{\alpha})_i}{(\hat{y}^{\alpha, \pm}_{2, P+1})_i},
\end{align}
and
\begin{align} \label{eq:ABcoeff}
A^\alpha_{2, p} =  \frac{2 \Delta x (\sigma^{\alpha}_{p})^2_{i} }{(\hat{y}^{\alpha, +}_{2, p})_{i} ((\hat{y}^{\alpha, +}_{2, p})_{i} - (\hat{y}^{\alpha, -}_{2, p})_{i})}, \qquad B^\alpha_{2, p} = \frac{2 \Delta x (\sigma^{\alpha}_{p})^2_{i} }{(\hat{y}^{\alpha, -}_{2, p})_{i} ((\hat{y}^{\alpha, -}_{2, p})_{i} - (\hat{y}^{\alpha, +}_{2, p})_{i})},
\end{align}
which are seen to be equivalent to equations (\ref{eqn_APp1}) and (\ref{eqn_Ap}).

The contribution to the consistency error of (\ref{eq:TruncScheme}) from the bilinear interpolation operator $\mathcal{I}$ is bounded by $(\Delta x)^{-1} \sum_p (|A_p|+|B_p|) (\Delta x)^2$, which is goes to 0 if $|A_p|+|B_p| = o((\Delta x)^{-1})$ for all $p$, which is violated if and only if $\mu^{\alpha, +}_{p}, \mu^{\alpha, -}_{p} \sim O(\sqrt{\Delta x})$.
\end{proof}

\begin{corollary} \label{cor:Growth}
For the truncated Scheme 2, (\ref{eq:TruncScheme}), (\ref{eqn_APp1}) and (\ref{eqn_Ap}), the following holds:
\begin{enumerate}[label=\alph*)]
\item
The scheme is of positive type and monotone with $A^{\alpha}_{2, p}, B^{\alpha}_{2, p} \geq 1$ for all $p \in [[1, P+1]]$.
\item
For points $x$ within a distance $\mathcal{O}(\Delta x)$ of the boundary and $p \neq P+1$,
as $\Delta x \to 0$, %$A^{\alpha}_{2, p}$ and/or $B^{\alpha}_{2, p}$ can be unbounded, in particular:
\begin{align*}
\text{if } |\hat{y}^{\alpha, +}_{2,p}| < \sqrt{\Delta x}|\sigma^{\alpha}_{p}|  \text{ and } |\hat{y}^{\alpha, -}_{2,p}| =  \sqrt{\Delta x}|\sigma^{\alpha}_{p}| 
%	&\implies A^{\alpha}_{2, p} \sim \mathcal{O}(\Delta x^{-3/4}) \text{ and } \lim_{\Delta x \to 0} B^{\alpha}_{2, p} = 2, \\
	&\implies A^{\alpha}_{2, p} \sim \mathcal{O}(\Delta x^{-1/2}) \text{ and } \lim_{\Delta x \to 0} B^{\alpha}_{2, p} = 2, \\
\text{if } |\hat{y}^{\alpha, -}_{2,p}| < \sqrt{\Delta x}|\sigma^{\alpha}_{p}| \text{ and } |\hat{y}^{\alpha, +}_{2,p}| = \sqrt{\Delta x}|\sigma^{\alpha}_{p}| 
	%&\implies \lim_{\Delta x \to 0} A^{\alpha}_{2, p} = 2 \text{ and } B^{\alpha}_{2, p} \sim \mathcal{O}(\Delta x^{-3/4}), \\
	&\implies \lim_{\Delta x \to 0} A^{\alpha}_{2, p} = 2 \text{ and } B^{\alpha}_{2, p} \sim \mathcal{O}(\Delta x^{-1/2}), \\
\text{if } |\hat{y}^{\alpha, \pm}_{2,p}| < \sqrt{\Delta x}|\sigma^{\alpha}_{p}| 
	& \implies A^{\alpha}_{2, p}, B^{\alpha}_{2, p} \sim \mathcal{O}(\Delta x^{-1}).
\end{align*} 
\item
The local consistency error for points with truncation and $p \neq P+1$ is $\mathcal{O}(\sqrt{\Delta x})$ if only one side of the stencil oversteps, and $\mathcal{O}(1)$ if both sides overstep. 
\end{enumerate}
\end{corollary}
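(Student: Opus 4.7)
My plan is to address the three subclaims in turn, each reducing to algebraic manipulation of the closed forms \eqref{eqn_APp1}--\eqref{eqn_Ap} together with information already established in Proposition \ref{pro:scheme2}. For part (a), I would first read off $A^\alpha_{2,p}, B^\alpha_{2,p} \geq 1$ directly from the explicit formulas: since $\mu^{\alpha,\pm}_p \in (0,1]$ by definition, the denominator satisfies $\mu^{\alpha,+}_p(\mu^{\alpha,+}_p + \mu^{\alpha,-}_p) \leq 2\mu^{\alpha,+}_p \leq 2$, giving $A^\alpha_{2,p} \geq 1$; the estimate for $B^\alpha_{2,p}$ is symmetric and the $p=P+1$ case is immediate from \eqref{eqn_APp1}. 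I would then rewrite \eqref{eq:TruncScheme} in the form \eqref{eq:positiveScheme} for the $\theta$-time stepping: the off-diagonal entries of $\mathcal{B}^{\alpha,n,n}$ and $\mathcal{B}^{\alpha,n,n-1}$ are linear combinations of the non-negative basis functions $w_i$ weighted by the non-negative constants $A^\alpha_{2,p}, B^\alpha_{2,p}$, and so are non-negative; the diagonal coefficients yield a CFL-type constraint on $\Delta t$ for $\theta<1$ in analogy with the untruncated case. Monotonicity then follows because none of the coefficients depends on $U$.

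For part (b), the key geometric observation is that when the stencil is truncated in the $+$ direction, the endpoint $\hat y^{\alpha,+}_{2,p} = \mu^{\alpha,+}_p \sqrt{\Delta x}\,\sigma^{\alpha}_p$ lies on $\partial\Omega$, so $|\hat y^{\alpha,+}_{2,p}|$ equals the distance from $x$ to $\partial\Omega$ in the direction of $\sigma^{\alpha}_p$. Under the standing hypothesis that $x$ is within $\mathcal{O}(\Delta x)$ of the boundary and that $\sigma^{\alpha}_p$ is not asymptotically tangent to the relevant face, this distance is $\mathcal{O}(\Delta x)$, so $\mu^{\alpha,+}_p = \mathcal{O}(\sqrt{\Delta x})$. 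In the first regime $\mu^{\alpha,-}_p = 1$, and substituting into \eqref{eqn_Ap} gives $A^\alpha_{2,p} = 2/(\mu^{\alpha,+}_p(\mu^{\alpha,+}_p+1)) \sim 2/\mu^{\alpha,+}_p = \mathcal{O}(\Delta x^{-1/2})$ and $B^\alpha_{2,p} = 2/(1+\mu^{\alpha,+}_p) \to 2$; the second regime is symmetric; in the third both $\mu^{\alpha,\pm}_p = \mathcal{O}(\sqrt{\Delta x})$, so both denominators in \eqref{eqn_Ap} are $\mathcal{O}(\Delta x)$, yielding $A^\alpha_{2,p}, B^\alpha_{2,p} = \mathcal{O}(\Delta x^{-1})$.

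For part (c), I would combine two sources of local error: the interpolation contribution on the $p$-th stencil, bounded by $(A^\alpha_{2,p}+B^\alpha_{2,p})\Delta x$ as recorded at the end of the proof of Proposition \ref{pro:scheme2}, and the third-order Taylor remainder after the moment matching \eqref{eq:AB1}--\eqref{eq:AB2}, bounded by $\bigl(A^\alpha_{2,p}|\hat y^+_{2,p}|^3 + B^\alpha_{2,p}|\hat y^-_{2,p}|^3\bigr)/\Delta x$. Substituting the growth rates from (b) together with $|\hat y^{\alpha,\pm}_{2,p}| = \mu^{\alpha,\pm}_p \sqrt{\Delta x}|\sigma^{\alpha}_p|$, both contributions are $\mathcal{O}(\sqrt{\Delta x})$ when exactly one side oversteps, and both reduce to $\mathcal{O}(1)$ (driven by the interpolation term) when both sides overstep, matching the claim.

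The main obstacle I anticipate is not inside the algebra but the geometric identification in part (b): the step $\mu^{\alpha,+}_p = \mathcal{O}(\sqrt{\Delta x})$ requires a mild non-degeneracy condition preventing $\sigma^{\alpha}_p$ from being almost tangent to $\partial\Omega$, which is automatic on rectangular domains away from corners but would need care on curved boundaries. Once the growth rates of $A^\alpha_{2,p}, B^\alpha_{2,p}$ are in hand, the work in (a) and (c) is routine bookkeeping.
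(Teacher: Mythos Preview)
Your proposal is correct and follows essentially the same route as the paper's own proof: part (a) from the explicit formulas \eqref{eqn_APp1}--\eqref{eqn_Ap} and $\mu^{\alpha,\pm}_p\in(0,1]$; part (b) from the identification $\mu^{\alpha,\pm}_p\sim\sqrt{\Delta x}$ for points a distance $\Theta(\Delta x)$ from the boundary; part (c) from the third-order Taylor remainder combined with the interpolation error bound at the end of Proposition~\ref{pro:scheme2}. The paper is terser in (a) (it defers the explicit positive-type rewriting to Section~\ref{subsec:properties}) and in (b) secures the lower bound $\mu^{\alpha,\pm}_p\not\in o(\sqrt{\Delta x})$ via the standing mesh assumption that interior nodes sit at least $\Delta x$ from $\partial\Omega$, rather than via your non-tangency remark; otherwise the arguments coincide.
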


\begin{proof}
The claim in \textit{a)} follows from (\ref{eqn_APp1}), (\ref{eqn_Ap}), and the fact that $\mu^{\alpha, \pm}_{p} \in (0, 1]$ and the coefficients $A^{\alpha}_{2, p}, B^{\alpha}_{2, p}$ do not depend on the numerical solution $U$.
The limits in \textit{b)} follow from (\ref{eqn_Ap}) and noting that if the stencil oversteps for a point $x$ lying $\mathcal{O}(\Delta x)$ away from the boundary, but at least $\Delta x$ by the assumption made on the mesh, then $\mu^{\alpha, +}_{p} \sim \mathcal{O}(\sqrt{\Delta x})$ and/or $\mu^{\alpha, -}_{p} \sim \mathcal{O}(\sqrt{\Delta x})$, but not $o(\sqrt{\Delta x})$.

To prove \textit{c)} we use Taylor expansions for each $p$ and conclude using the limits in \textit{b)}. 
Let $\phi : \bar{\Omega} \to \mathbb{R}$ be a smooth function and for any $p \in (\mathcal{P} \cap [[1, P]])$, where $\mathcal{P}$ denotes the set of stencils overstepping the domain, then by Taylor expansion and the consistency conditions \eqref{eq:AB1}--\eqref{eq:AB2} the local consistency error $\tau$ for the $p$-th addend of \eqref{eq:TruncScheme} using multi-index notation is given by
\begin{align*}
\tau &\defeq \frac{A^\alpha_p \phi(t, x + \hat{y}_{p}^{\alpha, +}) - (A^\alpha_p + B^\alpha_p) \phi(t, x) + B^\alpha_p \phi(t, x + \hat{y}_{p}^{\alpha, -}) }{2 \Delta x} - \frac{1}{2} \mathrm{tr}[\sigma^\alpha_p \sigma^{\alpha, T}_p D^2 \phi] \\
&= \frac{1}{2 \Delta x} \sum_{|\beta| \geq 3} \frac{1}{|\beta|!} ( A^\alpha_p (\hat{y}_{p}^{\alpha, +})^\beta + B^\alpha_p (\hat{y}_{p}^{\alpha, -})^\beta ) D^\beta \phi,
\end{align*}
where, due to the truncation of the stencil, the scheme is not central and therefore the terms for odd $|\beta|$ do not cancel out.
If only one side of the stencil oversteps then for $|\beta| = 3$
$$\frac{A^\alpha_p (\hat{y}_{p}^{\alpha, +})^\beta + B^\alpha_p (\hat{y}_{p}^{\alpha, -})^\beta}{\Delta x} \sim \mathcal{O}( \sqrt{\Delta x}),$$
whereas if both sides overstep then
the error from interpolation dominates and is $\mathcal{O}(1)$ for points $\mathcal{O}(\Delta x)$ from the boundary, as seen at the end of the proof of Proposition \ref{pro:scheme2}.
%$$\frac{A^\alpha_p (\hat{y}_{p}^{\alpha, +})^\beta + B^\alpha_p (\hat{y}_{p}^{\alpha, -})^\beta}{\Delta x} \sim \mathcal{O}( \Delta x).$$
\end{proof}

\begin{remark}[Two-sided overstepping] We note that it is possible for both sides of the stencil to overstep if the diffusion direction $\sigma^\alpha_p$ is (almost) parallel to the domain boundary, for points close to a locally convex smooth boundary with high curvature in that direction, as well as close to corners; see Remark \ref{rem:two-over} and Table \ref{Tab:shifted} below.

The scheme is consistent at points with two-sided overstepping if the truncated scheme is not interpolated at the boundary but uses the exact boundary values. In that case, the consistency error for those points is 
%($\beta = 3$)
%$$\frac{A^\alpha_p (\hat{y}_{p}^{\alpha, +})^\beta + B^\alpha_p (\hat{y}_{p}^{\alpha, -})^\beta}{\Delta x} \sim \mathcal{O}( \Delta x).$$
$\mathcal{O}( \Delta x)$.
\end{remark}

\subsection{Properties of the truncated stencil}
\label{subsec:properties}

The changes in the finite difference weights of scheme \eqref{eq:TruncScheme} introduced by the truncation, modify the positivity conditions given in Lemma 4.1 in \cite{debrabant2013semi}. % for the non-truncated version. 
We will show that the scheme remains conditionally $L_\infty$-stable and monotone, but the CFL conditions are more restrictive in the truncated case for time-stepping schemes with $\theta < 1$. We start by writing the scheme on a discrete time-space grid with mesh parameters $\Delta t$ and $\Delta x$ as
\begin{align}
&~\hat{L}^{\alpha}_{\Delta x} [\mathcal{I}_{\Delta x} \phi(t, \cdot)](t_{n}, x_j) \nonumber \\ 
&= \sum^{M}_{p=1} \frac{1}{2 \Delta x} \left[ A^{\alpha, n}_p (\mathcal{I}_{\Delta x} \phi(t_{n}, \cdot))(x_j + \hat{y}^{\alpha, +}_{p}) - (A^{\alpha, n}_p + B^{\alpha, n}_p) \phi(t_{n}, x_j) \right. \nonumber \\
&\qquad \qquad \qquad \left. + B^{\alpha, n}_p (\mathcal{I}_{\Delta x} \phi(t_{n}, \cdot))(x_j + \hat{y}^{\alpha, -}_{p})\right] \nonumber \\ 
&= \sum^{M}_{p=1} \Bigg\{ \sum_{i \in \mathcal{N}(x_j + \hat{y}^{\alpha, +}_{p})} \frac{1}{2 \Delta x} \left[ A^{\alpha, n}_p w_{i} (x_j + \hat{y}^{\alpha, +}_{p} )\right] (\phi(t_{n}, x_i) - \phi(t_{n}, x_j)) \, +  \nonumber \\ 
&\quad \qquad \sum_{i \in \mathcal{N}(x_j + \hat{y}^{\alpha, -}_{p})} \frac{1}{2 \Delta x} \left[ B^{\alpha, n}_p w_{ i} (x_j + \hat{y}^{\alpha, -}_{p} )\right] (\phi(t_{n}, x_i) - \phi(t_{n}, x_j)) \Bigg\} \nonumber \\ 
& = \sum_{i=1}^N  \sum^{M}_{p=1} \frac{A^{\alpha, n}_p w_{i} (x_j + \hat{y}^{\alpha, +}_{p}) + B^{\alpha, n}_p w_{i} (x_j + \hat{y}^{\alpha, -}_{p})}{2 \Delta x} (\phi(t_{n}, x_i) - \phi(t_{n}, x_j))  \nonumber \\
\label{Lhat}
& = \sum_{i = 1}^N \hat{l}^{\alpha, n}_{j, i}  (\phi(t_{n}, x_i) - \phi(t_{n}, x_j)),
\end{align}
where 
$\mathcal{N}$ is the set of neighbours as in \eqref{eq:monotone_int}, and
%the sum is done for $p \in [[1, M]]$ where $M$ is determined by the discretization example employed and
\begin{align*}
\hat{l}^{\alpha, n}_{ j, i} = \sum^{M}_{p=1} \frac{A^{\alpha, n}_p w_{ i} (x_j + \hat{y}^{\alpha, +}_{p}(t_{n}, x_j)) + B^{\alpha, n}_p w_{i} (x_j + \hat{y}^{\alpha, -}_{p} (t_{n}, x_j))}{2 \Delta x}.
\end{align*}
The first equality follows from (\ref{eq:TruncScheme}), the second from (\ref{eq:monotone_int}) and since
for all $1\le i, j \le N$ %we have that
\begin{align} \label{eq:monotone_int_2}
w_j(x) \geq 0, \quad w_i(x_j) = \delta_{ij}, \quad \text{and} \quad \sum_{i \in \mathcal{N}(x)} w_i (x) \equiv 1,
\end{align}
for multi-linear interpolation.
Here,
\[
\sum_{i=1}^N \hat{l}^{\alpha, n}_{j, i} = \sum^{M}_{p=1} \frac{A^{\alpha, n}_p + B^{\alpha, n}_p}{2\Delta x} \geq \frac{M}{\Delta x},
\]
with equality only in the absence of domain overstepping for all $p \in [[1, M]]$ at $(t_n, x_j, \alpha)$.

Writing the overall scheme in the form \eqref{eq:positiveScheme} of Definition \ref{def:positiveType}, we have that
\begin{align} \label{eq:FullScheme}
\sup_{\alpha} & \left\{ \left[ 1 + \theta  \Delta t_n \left( \sum^{M}_{p= 1} \frac{A^{\alpha, n}_p + B^{\alpha, n}_p}{2 \Delta x} - \hat{l}^{\alpha, n}_{ j, j} - c^{\alpha, n-1+\theta}_j \right) \right] U^n_j  - \theta \Delta t_n \sum_{i \neq j} \hat{l}^{\alpha, n}_{ j, i} U^n_i + \right. \nonumber \\
& \left. - \left[1 - (1 - \theta) \Delta t_n \left( \sum^{M}_{p= 1} \frac{A^{\alpha, n-1}_p + B^{\alpha, n-1}_p}{2 \Delta x} - \hat{l}^{\alpha, n-1}_{j, j} - c^{\alpha, n-1+\theta}_j \right) \right] U^{n-1}_j + \right.  \nonumber  \\
& \left. - (1-\theta) \Delta t_n \sum_{i \neq j} \hat{l}^{\alpha, n-1}_{j, i} U^{n-1}_i - \Delta t_n f^{\alpha, n-1+\theta}_{j} \right\} =0.
\end{align}
It is straightforward to write down the expressions for the coefficients in \eqref{eq:positiveScheme}:
\begin{align*}
&\mathcal{B}^{\alpha, n, n}_{j, j} = 1 + \theta  \Delta t_n \left( \sum^{M}_{p= 1} \frac{A^{\alpha, n}_p + B^{\alpha, n}_p}{2 \Delta x} - \hat{l}^{\alpha, n}_{ j, j} - c^{\alpha, n-1+\theta}_j \right), \\
&\mathcal{B}^{\alpha, n, n-1}_{ j, j} = 1 - (1 - \theta) \Delta t_n \left( \sum^{M}_{p= 1} \frac{A^{\alpha, n-1}_p + B^{\alpha, n-1}_p}{2 \Delta x} - \hat{l}^{\alpha, n-1}_{j, j} - c^{\alpha, n-1+\theta}_j \right), \\
&\mathcal{B}^{\alpha, n, n}_{j, i} = \theta \Delta t_n \, \hat{l}^{\alpha, n}_{j, i}, \qquad \mathcal{B}^{\alpha, n, n-1}_{j, i} = (1-\theta) \Delta t_n  \, \hat{l}^{\alpha, n-1}_{j, i}.
\end{align*}

\begin{remark}
\label{rem:rhs}
In writing down (\ref{Lhat}), we assumed that the value at the boundary is interpolated from other mesh points, which is feasible on rectangular cuboids, but not for general domain boundaries. In both cases, the Dirichlet boundary value at $x_j + \hat{y}^{\alpha, \pm}_{p}$ can be used.
This has the advantage that interpolation error is avoided. Moreover, as this value then contributes to the right-hand-side $f$ of equation (\ref{eq:FullScheme}) instead of the off-diagonal matrix elements, the system matrix becomes more diagonally dominant. This is advantageous for the iterative solution, see Section \ref{subsec:agmg}.
\end{remark}

The next proposition contains the positivity conditions for the coefficients $\mathcal{B}$ defined above.
\begin{proposition} \label{prop:CFL}
The scheme \eqref{eq:FullScheme} is of positive type if the following conditions hold,
\begin{align} \label{eq:CFL_trunc}
(1 - \theta) \Delta t_n \left[ \sum^{M}_{p=1} \frac{A^{\alpha, n-1}_p + B^{\alpha, n-1}_p}{2\Delta x} - c^{\alpha, n-1+\theta}_i \right] \leq 1, \quad \text{and } \;\; \theta \Delta t_n c^{\alpha, n-1+\theta}_i \leq 1,
\end{align}
for all $\alpha, n, i$.
\end{proposition}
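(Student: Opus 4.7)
The strategy is to verify, one by one, that each of the four families of coefficients $\mathcal{B}$ written out after \eqref{eq:FullScheme} is non-negative under the two CFL conditions in \eqref{eq:CFL_trunc}. The backbone is a sandwich inequality on $\hat{l}^{\alpha,n}_{j,j}$, namely
\begin{align*}
0 \;\le\; \hat{l}^{\alpha,n}_{j,j} \;=\; \sum_{p=1}^{M} \frac{A^{\alpha,n}_p\, w_j(x_j+\hat y^{\alpha,+}_p) + B^{\alpha,n}_p\, w_j(x_j+\hat y^{\alpha,-}_p)}{2\Delta x} \;\le\; \sum_{p=1}^{M} \frac{A^{\alpha,n}_p + B^{\alpha,n}_p}{2\Delta x},
\end{align*}
which I would derive directly from \eqref{eq:monotone_int_2} (so $0\le w_j\le 1$) together with $A^{\alpha,n}_p,B^{\alpha,n}_p\ge 0$, the latter being guaranteed by Corollary \ref{cor:Growth}\,\textit{a)}.

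First I would dispatch the off-diagonal coefficients $\mathcal{B}^{\alpha,n,n}_{j,i}$ and $\mathcal{B}^{\alpha,n,n-1}_{j,i}$ for $i\neq j$: they are a product of $\theta\Delta t_n$ or $(1-\theta)\Delta t_n$ with $\hat{l}^{\alpha,\cdot}_{j,i}$, and each factor is manifestly non-negative by the displayed inequality applied with $i$ in place of $j$ (the upper bound is not even needed). Next, for the ``future'' diagonal $\mathcal{B}^{\alpha,n,n}_{j,j}$, I would combine the lower bound $\sum_p(A+B)/(2\Delta x) - \hat{l}^{\alpha,n}_{j,j}\ge 0$ with the second condition in \eqref{eq:CFL_trunc}: if $c^{\alpha,n-1+\theta}_j\le 0$ the coefficient is bounded below by $1$; otherwise the negative part contributes at most $-\theta\Delta t_n c^{\alpha,n-1+\theta}_j\ge -1$, yielding $\mathcal{B}^{\alpha,n,n}_{j,j}\ge 0$.

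For the ``past'' diagonal $\mathcal{B}^{\alpha,n,n-1}_{j,j}$, I would instead use the upper bound on $\hat{l}^{\alpha,n-1}_{j,j}$ to control the quantity inside the parentheses from above by $\sum_p (A^{\alpha,n-1}_p+B^{\alpha,n-1}_p)/(2\Delta x) - c^{\alpha,n-1+\theta}_j$ (after dropping the non-positive term $-\hat{l}^{\alpha,n-1}_{j,j}$), and then apply the first condition in \eqref{eq:CFL_trunc} directly to conclude $\mathcal{B}^{\alpha,n,n-1}_{j,j}\ge 0$. A small case split on the sign of $c$ makes the argument tidy, since the term $-c$ can either help (when $c\le 0$) or be controlled by the CFL bound (when $c>0$).

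I do not foresee a genuine obstacle: the result is essentially a bookkeeping extension of Lemma 4.1 in \cite{debrabant2013semi}, with the only new ingredient being that the bare constant $M/(2\Delta x)$ is replaced by $\sum_p (A^{\alpha,n}_p+B^{\alpha,n}_p)/(2\Delta x)$ to account for the truncation weights. The one subtlety worth flagging is the asymmetry in time levels: the upper bound on $\hat l$ is used for $\mathcal{B}^{\alpha,n,n}_{j,j}$ (to absorb a potentially large positive $c$) whereas only the lower bound on $\hat l$ is needed for $\mathcal{B}^{\alpha,n,n-1}_{j,j}$, and this is why the two CFL conditions in \eqref{eq:CFL_trunc} take their respective forms.
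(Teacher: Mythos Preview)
Your approach is correct and is precisely the argument the paper has in mind: the paper does not write out a proof of Proposition~\ref{prop:CFL} at all, but simply displays the coefficients $\mathcal{B}^{\alpha,n,\cdot}_{j,\cdot}$ and later remarks that stability ``follows from the proof of Lemma~4.1 in \cite{debrabant2013semi} and the new CFL conditions in Proposition~\ref{prop:CFL}''. Your verification---checking each $\mathcal{B}$ is non-negative via the sandwich $0\le \hat{l}^{\alpha,\cdot}_{j,j}\le \sum_p(A_p+B_p)/(2\Delta x)$ and the two inequalities in \eqref{eq:CFL_trunc}---is exactly that deferred argument, with the only change being the replacement of $M/(2\Delta x)$ by the truncated weights.

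One cosmetic slip: in the paragraph on $\mathcal{B}^{\alpha,n,n-1}_{j,j}$ you write ``use the upper bound on $\hat{l}^{\alpha,n-1}_{j,j}$'' but then (correctly) drop $-\hat{l}^{\alpha,n-1}_{j,j}$ as non-positive, which is the \emph{lower} bound $\hat{l}\ge 0$. Your closing summary gets the roles right (upper bound of $\hat l$ for the implicit diagonal, lower bound for the explicit one), so this is just a wording inconsistency, not a mathematical gap.
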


%{\color{blue} \textbf{Corollary}}
\begin{corollary}
\label{cor:CFL}
In the case of overstepping and $\theta < 1$, monotonicity requires that $\Delta t \sim \mathcal{O}(\Delta x^{3/2})$ if only one side of the diffusion stencils oversteps, or $\Delta t \sim \mathcal{O}(\Delta x^{2})$ if both sides overstep. 
However, if the stencil is not truncated, the positivity condition remains as in \cite{debrabant2013semi}, that is $\Delta t \sim \mathcal{O}(\Delta x)$.
\end{corollary}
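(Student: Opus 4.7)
The plan is to combine Proposition \ref{prop:CFL} with the growth rates for $A^{\alpha}_{2,p}$, $B^{\alpha}_{2,p}$ supplied by Corollary \ref{cor:Growth}(b). The second inequality in \eqref{eq:CFL_trunc}, $\theta \Delta t_n c^{\alpha,n-1+\theta}_i \le 1$, involves only the bounded reaction coefficient and imposes no $\Delta x$-dependent restriction, so the CFL behaviour is entirely governed by the first inequality. For $\theta<1$ that rearranges to
\begin{align*}
\Delta t_n \;\le\; \frac{1}{1-\theta}\left[\,\sum_{p=1}^{M}\frac{A^{\alpha,n-1}_p+B^{\alpha,n-1}_p}{2\Delta x}-c^{\alpha,n-1+\theta}_i\right]^{-1},
\end{align*}
which must hold uniformly in $\alpha$, $n$ and $i$. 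Since the CFL is governed by the worst node, I would bound the bracketed sum from above by the maximum possible contribution of a single term $(A^{\alpha,n-1}_p+B^{\alpha,n-1}_p)/(2\Delta x)$ and read off the implied restriction on $\Delta t$ in each scenario.

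I would then treat the three cases in turn. If no stencil oversteps at any $(t_n,x_j,\alpha)$, then $A^{\alpha,n-1}_p=B^{\alpha,n-1}_p=1$ for all $p$, the bracketed sum equals $M/\Delta x - c^{\alpha,n-1+\theta}_i$, and we recover the classical bound $\Delta t \sim \mathcal{O}(\Delta x)$ from \cite{debrabant2013semi}. If there exists at least one diffusion index $p\in[[1,P]]$ with one-sided overstepping, Corollary \ref{cor:Growth}(b) gives exactly one of $A^{\alpha,n-1}_p, B^{\alpha,n-1}_p$ of order $\mathcal{O}(\Delta x^{-1/2})$ and its partner bounded, so the corresponding term of the sum is $\mathcal{O}(\Delta x^{-3/2})$, forcing $\Delta t \sim \mathcal{O}(\Delta x^{3/2})$. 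If both sides of some diffusion stencil overstep, then both weights are $\mathcal{O}(\Delta x^{-1})$, the relevant term is $\mathcal{O}(\Delta x^{-2})$, and $\Delta t \sim \mathcal{O}(\Delta x^{2})$ follows.

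Finally, I would check that the drift term $p=P+1$ does not alter these conclusions: since interior mesh points lie at least $\Delta x$ from $\partial\Omega$ and the drift increment has magnitude $\Delta x\,|b^\alpha|$ with $b^\alpha$ uniformly bounded, for sufficiently small $\Delta x$ we have $\mu^{\alpha,-}_{P+1}$ bounded away from zero, hence $A^{\alpha}_{2,P+1}=B^{\alpha}_{2,P+1}=\mathcal{O}(1)$ and this contribution is always of order $\mathcal{O}(\Delta x^{-1})$, strictly weaker than the diffusion contribution in both overstepping regimes. No genuine obstacle is anticipated: the argument is a direct substitution of the asymptotic weight sizes from Corollary \ref{cor:Growth}(b) into the positivity condition \eqref{eq:CFL_trunc}, with the only care required being to identify the worst stencil contribution in each case.
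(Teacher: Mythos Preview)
Your proposal is correct and follows essentially the same approach as the paper: substitute the asymptotic sizes of $A^{\alpha}_{p}+B^{\alpha}_{p}$ from Corollary~\ref{cor:Growth} into the first positivity condition of Proposition~\ref{prop:CFL} and read off the resulting $\Delta t$ restriction in each of the three cases. Your treatment is in fact more detailed than the paper's (which omits the explicit rearrangement, the remark on the second inequality, and the check that the drift term $p=P+1$ is harmless), but the underlying argument is identical.
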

\begin{proof}
From Corollary \ref{cor:Growth}, if the corresponding stencil is truncated on one side $A^{\alpha, n-1}_\cdot + B^{\alpha, n-1}_\cdot \sim \mathcal{O}(\Delta x^{-1/2})$ for sufficiently small $\Delta x$, 
$A^{\alpha, n-1}_\cdot + B^{\alpha, n-1}_\cdot \sim \mathcal{O}(\Delta x^{-1})$ if both sides are truncated,
whereas if there is no overstepping, $A^{\alpha, n-1}_\cdot + B^{\alpha, n-1}_\cdot \sim \mathcal{O}(1)$. 
\end{proof}

The $L^\infty$-stability follows from the proof of Lemma 4.1 in \cite{debrabant2013semi} and the new CFL conditions in Proposition \ref{prop:CFL}.

\subsection{Numerical experiments}

%We use two artificial optimal control problems 

To test the truncation of the stencil,
we consider Problems A and B in Section 9.3 from \cite{debrabant2013semi}. 
Both problems follow the formulation in \eqref{eq:1.1}--\eqref{eq:1.3} with homogeneous Dirichlet boundary conditions and have smooth solutions.

\paragraph{Problem A}\hspace{-0.3 cm}(see  Section 9.3 from \cite{debrabant2013semi}). It has exact solution $u(t, x_1, x_2) = \left( \frac{3}{2} - t \right) \sin x_1 \sin x_2,$
and coefficients and control set are given by
\begin{align*}
& f^\alpha = \left( \frac{1}{2} - t \right) \sin x_1 \sin x_2 + \left( \frac{3}{2} - t \right) \Big[ \sqrt{\cos^2 x_1 \sin^2 x_2 + \sin^2 x_1 \cos^2 x_2} +  \\
& \qquad ~  - 2 \sin(x_1 + x_2) \cos (x_1 + x_2) \cos x_1 \cos x_2 \Big], \\
& c^\alpha = 0, ~ b^\alpha = \alpha, \quad \sigma^\alpha = \sqrt{2} \begin{pmatrix} \sin(x_1 + x_2) \\ \cos(x_1 + x_2) \end{pmatrix}, \quad \mathcal{A} = \{ \alpha \in \mathbb{R}^2 : \alpha^2_1 + \alpha^2_2 = 1\}.
\end{align*}

\paragraph{Problem B}\hspace{-0.3 cm}(see  Section 9.3 from \cite{debrabant2013semi}).
\label{pr:ProblemB}
It has exact solution $u(t, x_1, x_2) = \left( 2 - t\right) \sin(x_1) \sin(x_2)$, and coefficients and control set
\begin{align*}
& f^\alpha = \left( 1 - t \right) \sin x_1 \sin x_2 - 2 \alpha_1 \alpha_2 (2 - t) \cos x_1 \cos x_2, \\
& c^\alpha = 0, ~ b^\alpha = 0, \quad \sigma^\alpha = \sqrt{2} \begin{pmatrix} \alpha_1 \\ \alpha_2 \end{pmatrix}, \quad \mathcal{A} = \{ \alpha \in \mathbb{R}^2 : \alpha^2_1 + \alpha^2_2 = 1\}.
\end{align*}

Both problems are solved on the domain $(t, x_1, x_2) \in [0, T] \times [-\pi, \pi]^2$ with $T = \frac{1}{2}$. 
We discretize the spatial domain using Cartesian grids with $N_x \times N_x$ equispaced nodes and for the control set $\mathcal{A}$ we take $N_\alpha$ equally spaced points. Here, $\mathcal{I}_{\Delta x}$ is the usual bilinear interpolator on rectangles. 

For illustration of the stencil and its non-locality, the top row of Figure \ref{fig:stencil_AB} represents the stencil for Problems A and B on a Cartesian grid of $11 \times 11$ points and $10$ points in the control set $\mathcal{A}$. Colour coded lines link the stencil points with the node where the numerical solution is computed, the different colours correspond to the different $\hat{y}^{\alpha, \cdot}_\cdot$. 
On top of some of the stencil points we print the value of the finite difference weights, for compactness we set $A \equiv A^\alpha_{2, 1}(x)$, $B \equiv B^\alpha_{2, 1}(x)$ and $C \equiv (\mu^\alpha_{2, 2}(x))^{-1}$, following the notation in \eqref{eqn_Ap} and \eqref{eqn_APp1}.
The bottom row of Figure \ref{fig:stencil_AB} represents the non-locality of the diffusion stencil by counting the number of stencil points at a given distance from the central node. The distance is measured as multiples of $\Delta x$ and given by $\left\lfloor \frac{(\sigma^\alpha(x))_i}{\sqrt{\Delta x}} \right\rfloor$, where the grid is of size $641 \times 641$ and $10$ points in the control set $\mathcal{A}$.

\begin{figure}[htp]
	\centering
	\subfloat[Stencil for Problem A in \cite{debrabant2013semi} on a Cartesian $11 \times 11$ grid for 10 sample points in the control set $\mathcal{A}$.]
	{\label{subfig:stencil_3A}\includegraphics[width=0.49\textwidth]{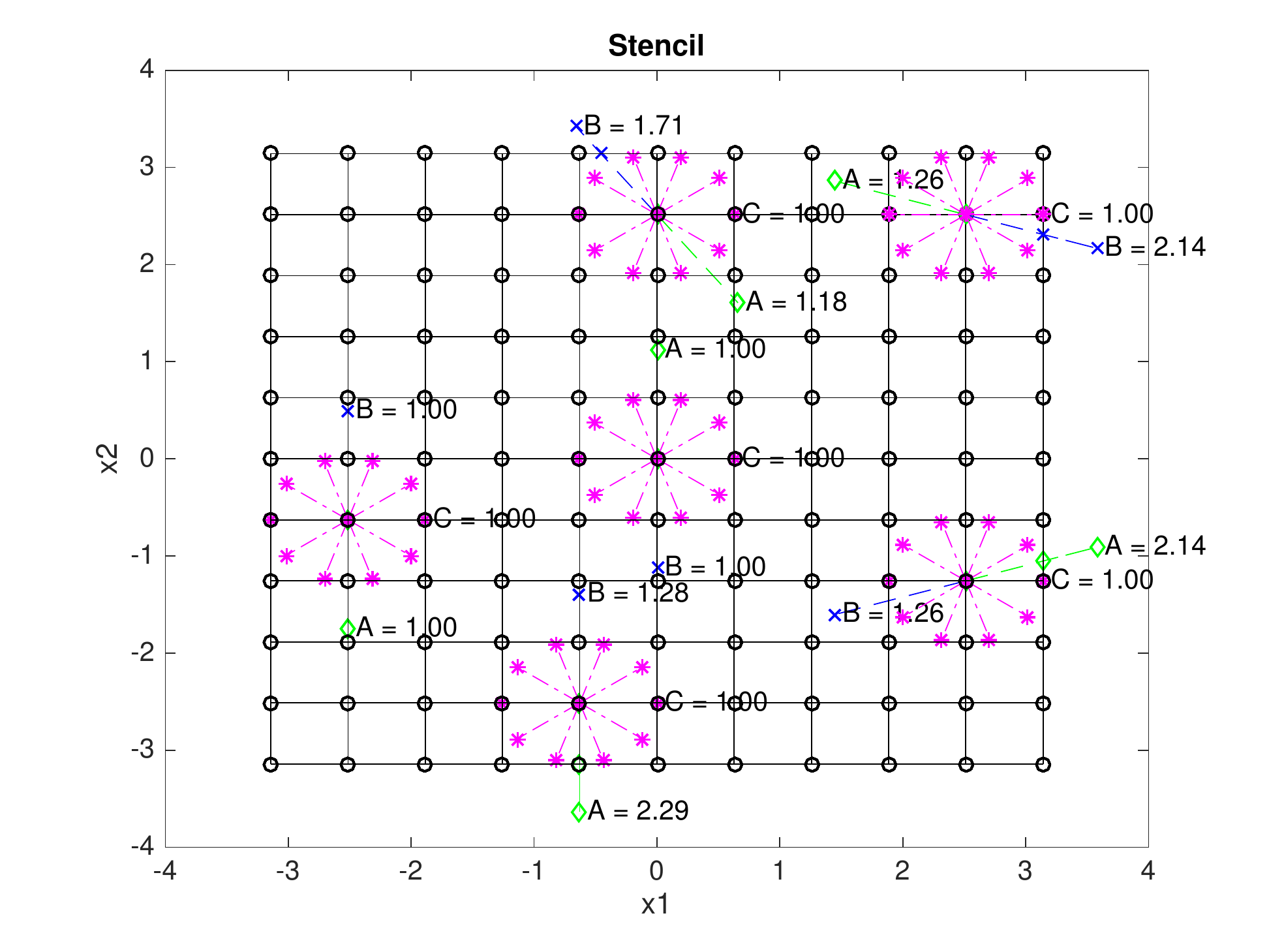}}
	\hspace{\stretch{1}}
	\subfloat[Stencil for Problem B in \cite{debrabant2013semi} on a Cartesian $11 \times 11$ grid for 10 sample points in the control set $\mathcal{A}$.]
	{\label{subfig:stencil_3B}\includegraphics[width=0.49\textwidth]{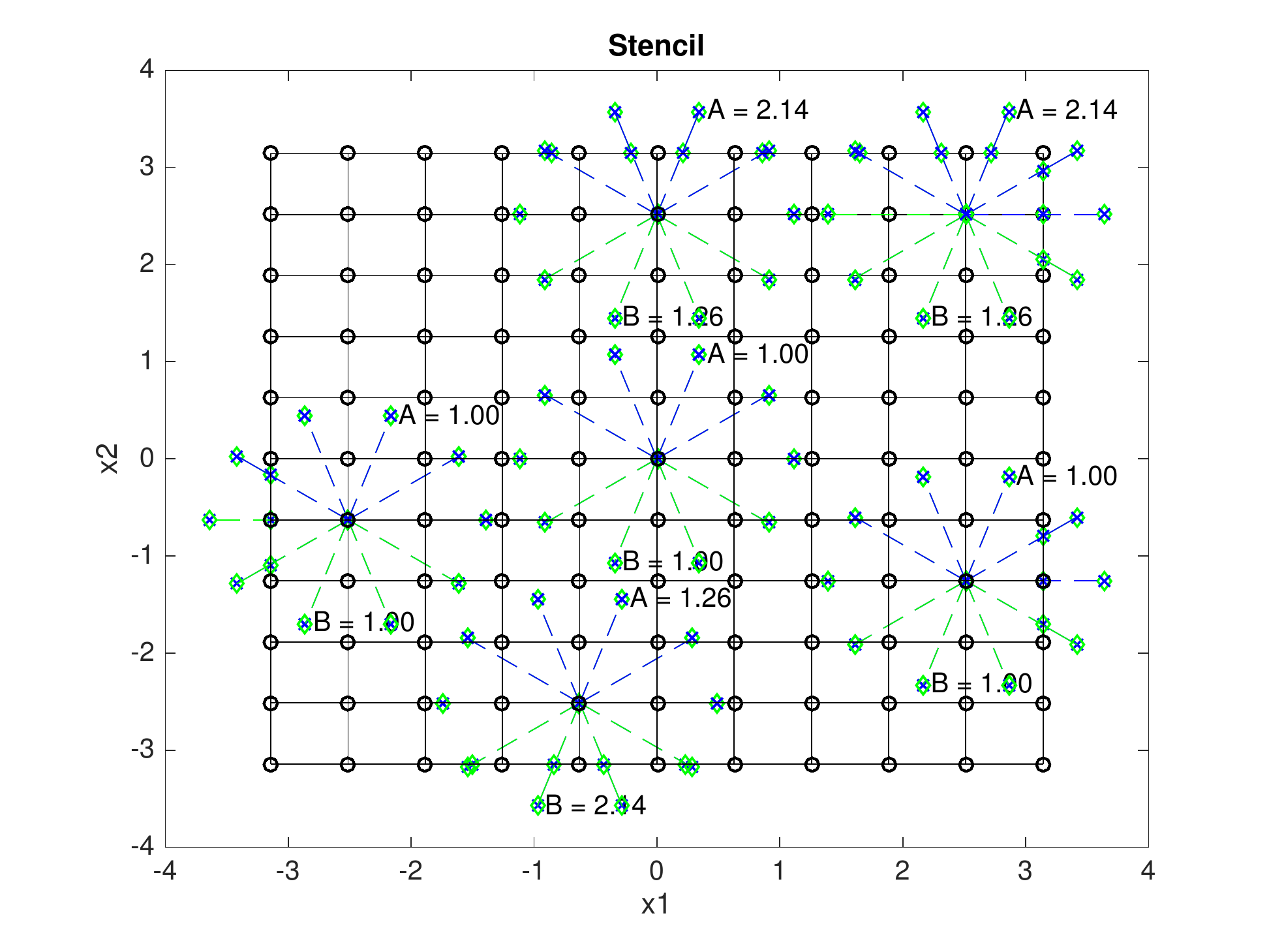}}
	\\
	\vspace{2em}
	\subfloat[Histogram of $\left\lfloor \frac{(\sigma^\alpha(x))_i}{\sqrt{\Delta x}} \right\rfloor$ in Problem A for all $x \in \Omega_{\Delta x}$ where $\Omega_{\Delta x}$ is a Cartesian grid with $\Delta x = \frac{2\pi}{640}$, 10 points in the control set $\mathcal{A}$, and $i \in \{1, 2\}$ is the dimension index.]
	{\label{subfig:stencilHist_3A}\includegraphics[width=0.49\textwidth]{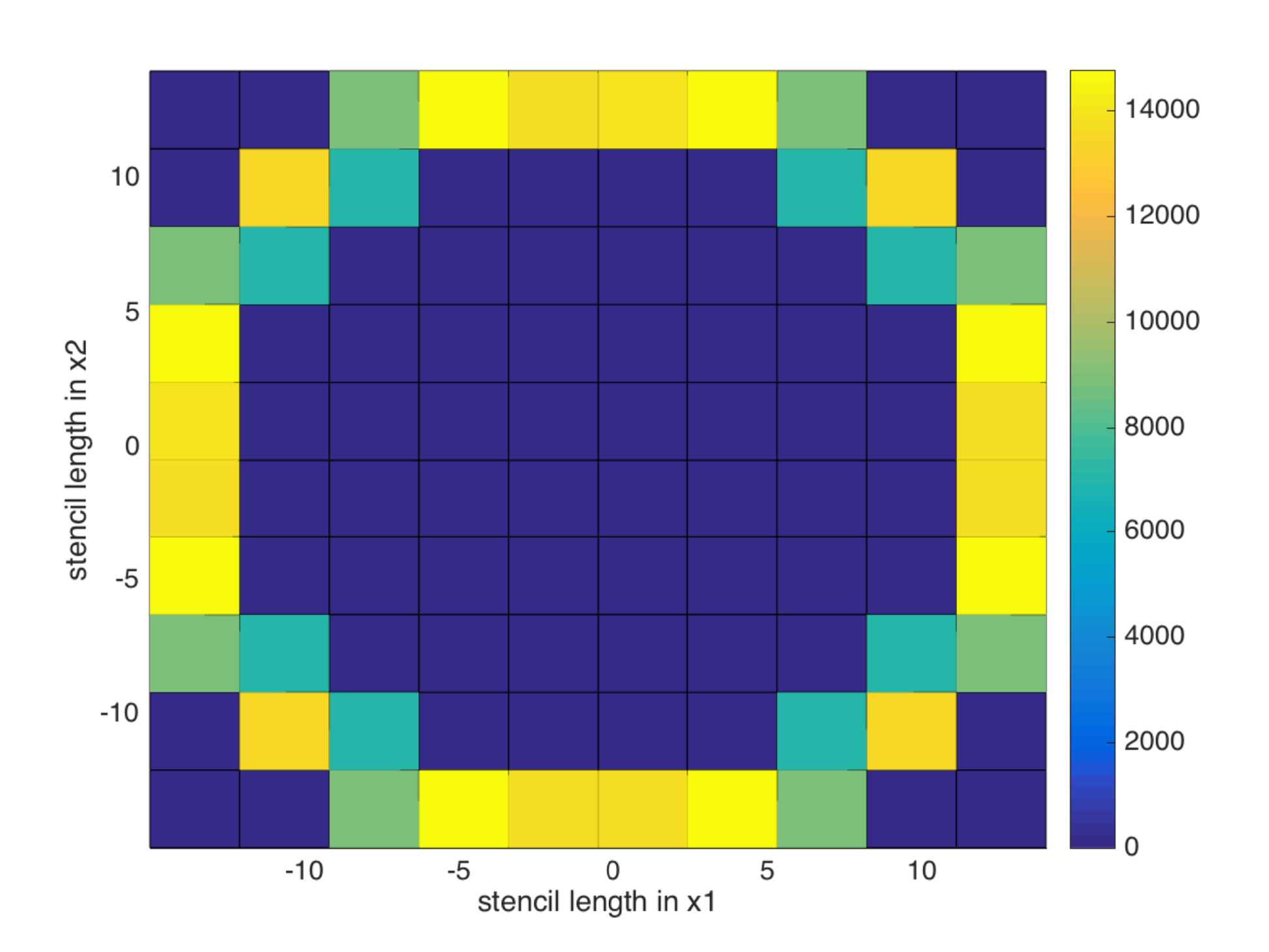}}
	\hspace{\stretch{1}}	
		\subfloat[Histogram of $\left\lfloor \frac{(\sigma^\alpha(x))_i}{\sqrt{\Delta x}} \right\rfloor$ in Problem B for all $x \in \Omega_{\Delta x}$ where $\Omega_{\Delta x}$ is a Cartesian grid with $\Delta x = \frac{2\pi}{640}$, 10 points in the control set $\mathcal{A}$, and $i \in \{1, 2\}$ is the dimension index.]
	{\label{subfig:stencilHist_3B}\includegraphics[width=0.49\textwidth]{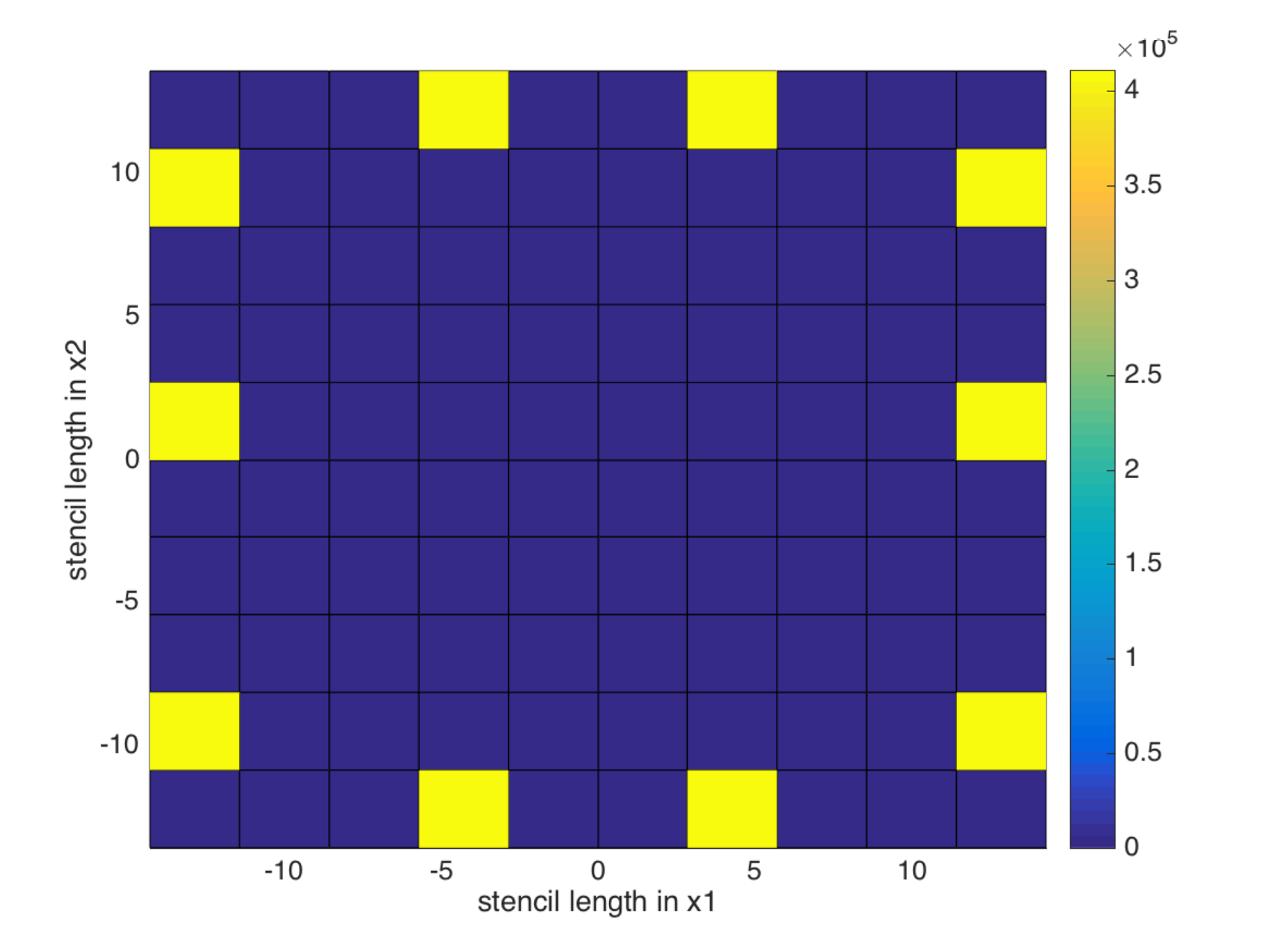}}
		\caption{Graphical representation of the stencil over a two-dimensional Cartesian grid of size $11 \times 11$ and 10 equally spaced points in the control set $\mathcal{A}$. The finite difference weights corresponding to some of the points are printed, where for simplicity the weights are labelled $A \equiv A^\alpha_{2, 1}(x)$, $B \equiv B^\alpha_{2, 1}(x)$ and $C \equiv (\mu^\alpha_{2, 2}(x))^{-1}$, following the notation in \eqref{eqn_Ap} and \eqref{eqn_APp1}. To illustrate the non-locality of the scheme as the grid is refined, the second row represents the histograms of the shortest displacement from the central node for a grid of size $641 \times 641$ for both problems. The radius of the stencil in $\sigma^\alpha$ is 14.27 for this grid, given by $\frac{\|\sigma^\alpha\|_2}{\sqrt{\Delta x}} = \sqrt{640/\pi}$.}
	\label{fig:stencil_AB}
\end{figure}

Problems A and B were obviously chosen in \cite{debrabant2013semi} for their periodic solutions, to be able to analyse the convergence of the scheme without the complication of boundary conditions. Here, we do not make use of the periodicity but only use the values at the boundary and not outside the domain.

We note that the problems being linear in $t$, a single time step with $\Delta t = T$ suffices to obtain an exact solution in $t$. However, in order to check the effect of the truncation on the stability, in addition to $\Delta t = T$, we also investigate $\Delta t$ equal to $\frac{\Delta x}{4}$, $\Delta x^{3/2}$, and $\Delta x^2$. We report the $\infty$-norm of the errors over two regions: the first one comprising the whole domain, and the second one comprising part of the interior of the domain.

We consider explicit and implicit time stepping schemes, corresponding to $\theta = 0$ and $\theta = 1$ respectively. For the explicit scheme in the case of overstepping we test the following modifications of the scheme:

\begin{enumerate}
\item truncation of the stencil as discussed in Section \ref{sec:Consistency} (Table \ref{Tab:Linf_matrix_40} for Problem A and Table \ref{Tab:Linf_exp_trunc_40_B} for Problem B);
\item constant extrapolation of the boundary value 
 in the direction of the semi-Lagrangian step
(Table \ref{Tab:Linf_const_40} for Problem A and Table \ref{Tab:Linf_exp_const_40_B} for Problem B);
\item linear extrapolation of the boundary value in the direction of the semi-Lagrangian step (Table \ref{Tab:Linf_linear_40} for Problem A and Table \ref{Tab:Linf_exp_lin_40_B} for Problem B).
\end{enumerate}
For the implicit case we only consider the first modification, i.e.\ truncation of the stencil (Table \ref{Tab:Linf_imp_trunc_40} for Problem A and Table \ref{Tab:Linf_imp_trunc_40_B} for Problem B). 

The results confirm the impact of the truncation on the stability of the scheme, when $\theta = 0$. 
However, when $\theta = 1$, we do not observe any instability regardless of the size of the time step. When stable, the truncation of the stencil outperforms the two extrapolations of the boundary conditions considered. 
Furthermore, as the mesh and time steps are refined, only the truncated scheme, if stable, achieves convergence orders close to $\mathcal{O}(\Delta x)$ when the error at $t = T$ is measured on the entire spatial grid.
This can be explained without rigorous proof by the observation that  the truncation error of order $\sqrt{\Delta x}$ is restricted to a boundary layer of width $\sqrt{\Delta x}$.
Therefore, as seen from the last two columns in Table \ref{Tab:Linf_imp_trunc_40}, choosing $\Delta t$ of order higher than 1 in $\Delta x$ does not improve the accuracy of the numerical results and leads to computational inefficiency.

\begin{remark}
Regarding the discretization of the control set, we take $N_\alpha = 40$ equally spaced points. For this choice, the discretization error of the LISL scheme is found to dominate the control discretization error for the problems and the space-time mesh sizes considered.
%It is to be expected that if we continued to refine the space-time mesh keeping $N_\alpha$ constant, then the error in the control discretization would become dominant, .
\end{remark}

%These two problems also highlight the dependence of the CFL condition on the control $\alpha$. Comparing Table \ref{Tab:Linf_matrix_40} for Problem A and Table \ref{Tab:Linf_exp_trunc_40_B} for Problem B.

% we observe that only the errors for Problem A diverge, this means that the optimal policy for Problem B is such that for most of the grid points the optimal stencil does not require truncation.

\def\ignore#1\endignore{}
\newcolumntype{I}{@{}>{\ignore}c<{\endignore}} % New column type

\newcommand{\newsubtab}[3]{
\subfloat[#2]{
\begin{tabular}{c|cr|cr|cr|cr}
\multirow{2}{*}{$N_x$}&\multicolumn{2}{c|}{$\Delta t = T$}&\multicolumn{2}{c|}{$\Delta t = \frac{\Delta x}{4}$}&\multicolumn{2}{c|}{$\Delta t = \Delta x^{\frac{3}{2}}$}&\multicolumn{2}{c}{$\Delta t = \Delta x^2$}\\ 
\cline{2-9}
&error&rate&error&rate&error&rate&error&rate\\\hline
#1
\end{tabular}
}}

% pwd -> '/Users/julen/Dropbox/DPhil/MATLAB/SL-schemes/MG_examples'
% data_A = 'data/Example93A_truncation_benchmark_201603111211.mat';
% /Users/julen/Dropbox/DPhil/MATLAB/SL-schemes/test_big_gruesome/data/Example93A_truncation_benchmark_201609011958.mat
\begin{table}[htp]\captionsetup[subfloat]{position=top}
\begin{center}
% print_table(e_Linf_matrix, delta_x, Nx1, 1);
\newsubtab{
%11 & 3.24e-01 & 0.00 & 1.91e-01 & 0.00 & 2.05e-01 & 0.00 & 2.05e-01 & 0.00\\
%21 & 1.93e-01 & 0.75 & 8.48e-02 & 1.17 & 8.78e-02 & 1.22 & 8.54e-02 & 1.26\\
%41 & 1.42e-01 & 0.44 & 4.39e-02 & 0.95 & 4.39e-02 & 1.00 & 4.36e-02 & 0.97\\
41 & 1.42e-01 & - & 4.39e-02 & - & 4.39e-02 & - & 4.36e-02 & -\\
81 & 1.04e-01 & 0.45 & 2.12e-02 & 1.05 & 2.11e-02 & 1.06 & 2.11e-02 & 1.05\\
161 & 7.36e-02 & 0.50 & 1.10e-02 & 0.94 & 1.10e-02 & 0.94 & 1.10e-02 & 0.94\\
321 & 5.28e-02 & 0.48 & 1.34e+23 & -83.33 & 5.77e-03 & 0.93 & 5.76e-03 & 0.93\\
641 & 3.77e-02 & 0.48 & 5.07e+89 & -221.17 & 3.10e-03 & 0.90 & 3.10e-03 & 0.89\\
}
{Error in $L^\infty$-norm over $\Omega_{\Delta x}$}{}
%print_table(e_Linf_interior_matrix, delta_x, Nx1, 1);
\\\newsubtab{
%11 & 2.63e-01 & 0.00 & 1.91e-01 & 0.00 & 2.05e-01 & 0.00 & 2.05e-01 & 0.00\\
%21 & 1.62e-01 & 0.70 & 8.39e-02 & 1.18 & 8.65e-02 & 1.25 & 8.45e-02 & 1.28\\
%41 & 8.61e-02 & 0.91 & 4.38e-02 & 0.94 & 4.38e-02 & 0.98 & 4.35e-02 & 0.96\\
41 & 8.61e-02 & - & 4.38e-02 & - & 4.42e-02 & - & 4.35e-02 & -\\
81 & 4.22e-02 & 1.03 & 2.12e-02 & 1.05 & 2.11e-02 & 1.06 & 2.11e-02 & 1.05\\
161 & 2.14e-02 & 0.98 & 1.10e-02 & 0.94 & 1.10e-02 & 0.95 & 1.10e-02 & 0.94\\
321 & 1.10e-02 & 0.96 & 1.84e+13 & -50.57 & 5.71e-03 & 0.95 & 5.70e-03 & 0.95\\
641 & 5.96e-03 & 0.88 & 1.06e+72 & -195.20 & 3.08e-03 & 0.89 & 3.08e-03 & 0.89\\
}
{Error in $L^\infty$-norm over $\Omega_{\Delta x} \cap \lbrack -\pi/2 , \pi/2 \rbrack^2$}{}
\end{center}
\caption{Results using the truncation of the stencil for explicit method with $N_{\alpha} = 40$ for Problem A.\label{Tab:Linf_matrix_40}}
\end{table}

\begin{table}[htp]\captionsetup[subfloat]{position=top}
\begin{center}
% print_table(e_Linf_const, delta_x, Nx1, 1);
\newsubtab{
%11 & 6.50e-01 & 0.00 & 4.30e-01 & 0.00 &   4.83e-01 & 0.00 	& 4.83e-01 & 0.00\\
%21 & 9.75e-01 & -0.59 & 4.58e-01 & -0.09 & 4.84e-01 & -0.00 	& 4.63e-01 & 0.06\\
%41 & 1.36e+00 & -0.48 & 3.68e-01 & 0.31 &  3.68e-01 & 0.39  & 3.65e-01 & 0.34\\
41 & 1.36e+00 & - & 3.68e-01 & - 		&   3.72e-01 & - & 3.65e-01 & - \\
81 & 1.89e+00 & -0.48 & 2.61e-01 & 0.49 &   2.62e-01 & 0.51 & 2.60e-01 & 0.49\\
161 & 2.67e+00 & -0.49 & 1.80e-01 & 0.54 &  1.80e-01 & 0.54 & 1.80e-01 & 0.53\\
321 & 3.77e+00 & -0.50 & 1.27e-01 & 0.51 &  1.27e-01 & 0.51 & 1.27e-01 & 0.51\\
641 & 5.34e+00 & -0.50 & 9.18e-02 & 0.47 &  9.18e-02 & 0.47 & 9.18e-02 & 0.46\\
}
{Error in $L^\infty$-norm over $\Omega_{\Delta x}$}{Tab:Ex1a}
% print_table(e_Linf_interior_const, delta_x, Nx1, 1);
\\\newsubtab{
%11 & 4.75e-01 & 0.00 & 3.67e-01 & 0.00 & 3.99e-01 & 0.00 & 3.99e-01 & 0.00\\
%21 & 2.99e-01 & 0.67 & 1.97e-01 & 0.90 & 2.05e-01 & 0.96 & 1.99e-01 & 1.01\\
%41 & 1.59e-01 & 0.91 & 1.04e-01 & 0.92 & 1.04e-01 & 0.98 & 1.03e-01 & 0.95\\
41 & 1.59e-01 & - & 1.04e-01 & - & 		 1.05e-01 & - & 1.03e-01 & -\\
81 & 8.15e-02 & 0.96 & 5.25e-02 & 0.99 & 5.26e-02 & 1.00 & 5.22e-02 & 0.98\\
161 & 4.22e-02 & 0.95 & 2.67e-02 & 0.98 & 2.66e-02 & 0.98 & 2.66e-02 & 0.97\\
321 & 2.18e-02 & 0.95 & 1.36e-02 & 0.97 & 1.36e-02 & 0.97 & 1.36e-02 & 0.97\\
641 & 1.21e-02 & 0.85 & 8.21e-03 & 0.73 & 8.20e-03 & 0.73 & 8.19e-03 & 0.73\\

}
{Error in $L^\infty$-norm over $\Omega_{\Delta x} \cap \lbrack -\pi/2 , \pi/2 \rbrack^2$}{Tab:Ex1b}
\end{center}
\caption{Results using constant extrapolation of the boundary condition for explicit method with $N_{\alpha} = 40$ for Problem A.\label{Tab:Linf_const_40}}
\end{table}

\begin{table}[htp]\captionsetup[subfloat]{position=top}
\begin{center}
% print_table(e_Linf_linear, delta_x, Nx1, 1);
\newsubtab{
%11 & 4.87e-01 & 0.00 & 3.71e-01 & 0.00 & 4.10e-01 & 0.00 & 4.10e-01 & 0.00\\
%21 & 2.99e-01 & 0.70 & 1.97e-01 & 0.91 & 2.05e-01 & 1.00 & 1.99e-01 & 1.04\\
%41 & 1.59e-01 & 0.91 & 1.04e-01 & 0.92 & 1.04e-01 & 0.98 & 1.03e-01 & 0.95\\
41 & 1.59e-01 & - & 1.04e-01 & - 		&  1.05e-01 & -  	  & 1.03e-01 & -\\
81 & 8.15e-02 & 0.96 & 5.25e-02 & 0.99 	&  5.26e-02 & 1.00     & 5.22e-02 & 0.98\\
161 & 4.28e-02 & 0.93 & 5.62e-01 & -3.42 &  5.63e-01 & -3.42  & 5.58e-01 & -3.42\\
321 & 2.75e-02 & 0.64 & 4.41e+03 & -12.94 & 6.00e+03 & -13.38 & 8.00e+03 & -13.81\\
641 & 1.85e-02 & 0.57 & 2.77e+20 & -55.80 & 2.70e+20 & -55.32 & 1.37e+21 & -57.25\\
}
{Error in $L^\infty$-norm over $\Omega_{\Delta x}$}{Tab:Ex1a}
% print_table(e_Linf_interior_linear, delta_x, Nx1, 1);
\\\newsubtab{
%11 & 4.75e-01 & 0.00 & 3.68e-01 & 0.00 & 3.99e-01 & 0.00 & 3.99e-01 & 0.00\\
%21 & 2.99e-01 & 0.67 & 1.97e-01 & 0.90 & 2.05e-01 & 0.96 & 1.99e-01 & 1.01\\
%41 & 1.59e-01 & 0.91 & 1.04e-01 & 0.92 & 1.04e-01 & 0.98 & 1.03e-01 & 0.95\\
41 & 1.59e-01 & - & 1.04e-01 & - 			& 1.05e-01 & -   & 1.03e-01 & -\\
81 & 8.15e-02 & 0.96 & 5.25e-02 & 0.99 		& 5.26e-02 & 1.00    & 5.22e-02 & 0.98\\
161 & 4.22e-02 & 0.95 & 2.67e-02 & 0.98 	&  2.66e-02 & 0.98   & 2.66e-02 & 0.97\\
321 & 2.18e-02 & 0.95 & 1.96e+00 & -6.20 	&  2.07e+00 & -6.28   & 2.23e+00 & -6.39\\
641 & 1.21e-02 & 0.85 & 9.26e+14 & -48.75 	&  3.18e+15 & -50.45  & 3.01e+15 & -50.26\\
}
{Error in $L^\infty$-norm over $\Omega_{\Delta x} \cap \lbrack -\pi/2 , \pi/2 \rbrack^2$}{Tab:Ex1b}
\end{center}
\caption{Results using linear extrapolation for points out of the domain for explicit method with $N_{\alpha} = 40$ for Problem A.\label{Tab:Linf_linear_40}}
\end{table}

\begin{table}[htp]\captionsetup[subfloat]{position=top}
\begin{center}
% print_table(e_Linf_imp_trunc, delta_x, Nx1, 1);
\newsubtab{
%11 & 1.30e-01 & 0.00 & 1.64e-01 & 0.00 & 1.52e-01 & 0.00 & 1.52e-01 & 0.00\\
%21 & 5.99e-02 & 1.12 & 7.73e-02 & 1.09 & 7.49e-02 & 1.02 & 7.68e-02 & 0.99\\
%41 & 3.25e-02 & 0.88 & 4.21e-02 & 0.88 & 4.21e-02 & 0.83 & 4.24e-02 & 0.85\\
41 & 3.25e-02 & - & 4.21e-02 & -         & 4.17e-02 & - & 4.24e-02 & -\\
81 & 1.59e-02 & 1.03 & 2.08e-02 & 1.02   & 2.08e-02 & 1.01 & 2.09e-02 & 1.02\\
161 & 8.39e-03 & 0.92 & 1.09e-02 & 0.93  & 1.09e-02 & 0.93 & 1.10e-02 & 0.93\\
321 & 4.38e-03 & 0.94 & 5.75e-03 & 0.93  & 5.75e-03 & 0.93 & 5.76e-03 & 0.93\\
641 & 2.37e-03 & 0.89 & 3.09e-03 & 0.89  & 3.10e-03 & 0.89 & 3.10e-03 & 0.89\\
}
{Error in $L^\infty$-norm over $\Omega_{\Delta x}$}{Tab:Ex1a}
% print_table(e_Linf_interior_imp_trunc, delta_x, Nx1, 1);
\\\newsubtab{
%11 & 1.30e-01 & 0.00 & 1.64e-01 & 0.00 & 1.52e-01 & 0.00 & 1.52e-01 & 0.00\\
%21 & 5.99e-02 & 1.12 & 7.73e-02 & 1.09 & 7.49e-02 & 1.02 & 7.68e-02 & 0.99\\
%41 & 3.25e-02 & 0.88 & 4.21e-02 & 0.88 & 4.21e-02 & 0.83 & 4.24e-02 & 0.85\\
41 & 3.25e-02 & - & 4.21e-02 & - &        4.17e-02 & -  & 4.24e-02 & -\\
81 & 1.59e-02 & 1.03 & 2.08e-02 & 1.02 &  2.08e-02 & 1.01  & 2.09e-02 & 1.02\\
161 & 8.39e-03 & 0.92 & 1.09e-02 & 0.93 & 1.09e-02 & 0.93 & 1.10e-02 & 0.93\\
321 & 4.35e-03 & 0.95 & 5.68e-03 & 0.94 & 5.69e-03 & 0.94 & 5.70e-03 & 0.95\\
641 & 2.37e-03 & 0.88 & 3.07e-03 & 0.89 & 3.08e-03 & 0.89 & 3.08e-03 & 0.89\\
}
{Error in $L^\infty$-norm over $\Omega_{\Delta x} \cap \lbrack -\pi/2 , \pi/2 \rbrack^2$}{Tab:Ex1b}
\end{center}
\caption{Results using truncation for points out of the domain for implicit method with $N_{\alpha} = 40$ for Problem A.\label{Tab:Linf_imp_trunc_40}}
\end{table}

\begin{remark}
\label{rem:two-over}
Corollary \ref{cor:CFL} shows two different CFL conditions for the truncated stencil, the first one for diffusion stencils where only one side oversteps and a second one when both sides overstep. 
The results in Table \ref{Tab:Linf_matrix_40} for Problem A and Table \ref{Tab:Linf_exp_trunc_40_B} for Problem B correspond to the former situation. 
To check the sharpness of the latter, we shift the spatial domain in Problem A in both directions by $\frac{7\pi}{8}$.
The new spatial domain is thus $\bar{\Omega} = [\frac{-\pi}{8}, \frac{15\pi}{8}]^2$. 
Note that the solution itself is periodic with period $2\pi$.
This problem differs from the original one in that both sides of the diffusion stencil overstep for mesh points
within a distance of $\mathcal{O}(\sqrt{\Delta x})$ to
the bottom left corner, located at $(\frac{-\pi}{8}, \frac{-\pi}{8})$,
where $\sigma^\alpha = (-1,1)^T$.
In Table \ref{Tab:shifted} we report the results for the explicit method using the truncation of the stencil. 
As expected, we find that we now need $\Delta t \sim \Delta x^2$ for stability.
\end{remark}

% pwd -> '/Users/julen/Dropbox/DPhil/MATLAB/SL-schemes/test_trunc_mouse'
% load('ExampleD_truncation_benchmark_201609061051.mat');
% /Users/julen/Dropbox/DPhil/MATLAB/SL-schemes/test_trunc_mouse
\begin{table}[htp]\captionsetup[subfloat]{position=top}
\begin{center}
% print_table(e_Linf_matrix, delta_x, Nx1, 1);
\newsubtab{
%11 & 3.70e-01 & 0.00 & 1.91e-01 & 0.00 & 2.09e-01 & 0.00 & 2.09e-01 & 0.00\\
%21 & 2.26e-01 & 0.71 & 9.45e-02 & 1.01 & 1.01e-01 & 1.05 & 9.52e-02 & 1.13\\
%41 & 1.55e-01 & 0.54 & 4.71e-02 & 1.00 & 4.76e-02 & 1.08 & 4.67e-02 & 1.03\\
41 & 1.55e-01 & - & 4.71e-02 & - & 4.76e-02 & - & 4.67e-02 & -\\
81 & 1.12e-01 & 0.47 & 1.57e+05 & -21.67 & 7.90e+05 & -23.98 & 2.11e-02 & 1.15\\
161 & 8.04e-02 & 0.47 & 1.02e+33 & -92.39 & 1.30e+35 & -97.06 & 1.10e-02 & 0.94\\
321 & 5.80e-02 & 0.47 & 6.73e+103 & -235.26 & 5.96e+138 & -344.35 & 5.76e-03 & 0.93\\
641 & 4.22e-02 & 0.46 & 8.17e+276 & -574.97 & NaN & NaN & 3.10e-03 & 0.89\\
}
{Error in $L^\infty$-norm over $\Omega_{\Delta x}$}{}
%print_table(e_Linf_interior_matrix, delta_x, Nx1, 1);
\\\newsubtab{
%11 & 2.53e-01 & 0.00 & 1.64e-01 & 0.00 & 1.79e-01 & 0.00 & 1.79e-01 & 0.00\\
%21 & 1.69e-01 & 0.58 & 9.45e-02 & 0.80 & 1.01e-01 & 0.83 & 9.52e-02 & 0.91\\
%41 & 8.65e-02 & 0.97 & 4.70e-02 & 1.01 & 4.74e-02 & 1.09 & 4.66e-02 & 1.03\\
41 & 8.65e-02 & - & 4.70e-02 & - & 4.74e-02 & - & 4.66e-02 & -\\
81 & 4.22e-02 & 1.04 & 2.07e-02 & 1.18 & 2.07e-02 & 1.19 & 2.06e-02 & 1.18\\
161 & 2.14e-02 & 0.98 & 1.18e+06 & -25.76 & 1.18e+09 & -35.73 & 1.08e-02 & 0.93\\
321 & 1.10e-02 & 0.96 & 7.99e+47 & -138.96 & 4.94e+84 & -251.21 & 5.59e-03 & 0.95\\
641 & 5.96e-03 & 0.88 & 9.81e+165 & -392.28 & NaN & NaN & 3.02e-03 & 0.89\\
}
{Error in $L^\infty$-norm over $\Omega_{\Delta x} \cap \lbrack 3\pi/8 , 11\pi/8 \rbrack^2$}{}
\end{center}
\caption{Results using the truncation of the stencil for explicit method with $N_{\alpha} = 40$ for Problem A on a shifted domain,
as described in Remark \ref{rem:two-over}.
\label{Tab:shifted}}
\end{table}

\begin{remark}
For the explicit method using the truncation of the stencil, i.e.\ Tables \ref{Tab:Linf_matrix_40}, \ref{Tab:Linf_exp_trunc_40_B} and \ref{Tab:shifted}, focusing on the $\Delta t = T$ case,  we notice that the convergence rate over the whole mesh $\Omega_{\Delta x}$ is approximately 0.5, whereas it is approximately 1.0 when the error is measured in the interior of the mesh.
We also notice that there is a significant difference between the magnitude of the errors if measured over the whole grid or on a region in the interior.
The difference in the magnitude of the errors may be due to the fact that $\Delta t = T$ does not satisfy the CFL condition and that the CFL condition is more restrictive for points where the stencil is truncated. 
It is also at these points that the local consistency error is of order $\sqrt{\Delta x}$ as shown in Corollary \ref{cor:Growth}.
The situation is different for $\Delta t = \Delta x^2$ in the explicit case, or for any $\Delta t$ in the implicit case.
In these cases, the error convergence rates are approximately 1.0 when measured over the whole mesh and the errors over the whole grid and in the interior are comparable in magnitude. 
\end{remark}

\FloatBarrier

\section{Multigrid preconditioning}
\label{sec:MG}

In this section, we study the application of multigrid preconditioners together with policy iteration \cite{bokanowski_Howard} to solve the non-linear system \eqref{eq:nonlinear_sys}.

Geometric multigrid requires us to predefine a grid hierarchy based on the geometry of the problem. The variability of the width of the LISL stencil within a given grid (variable coefficients) and through the grid hierarchy makes it difficult, even for simple problems, to design an appropriate grid hierarchy and a good smoother. Moreover, the varying stencil requires us to build the coarse-grid version of the operator algebraically instead of using its coarse grid version, which further limits our knowledge of the problem as we go deeper into the grid hierarchy.

Another aspect to consider is related to the transfer operators. Standard grid interpolations provide approximations using the grid neighbours of a given node, whereas for the LISL stencil, being non-local, the solution at a given node may not be best approximated by its neighbours on the grid but by those on its stencil. These heuristics suggest that the algebraic approach to multigrid, fixing the smoother and building operator dependent intergrid transfer operators, may result in more efficient multigrid preconditioning for LISL discretizations.

Algebraic multigrid (AMG), introduced in \cite{ruge1987algebraic}, constructs ``coarse grids" based on the matrix coefficients. However, as pointed out in Section 6.2 of the recent review on preconditioning \cite{wathen2015precond}, AMG coarsening may not reduce the number of variables fast enough from one grid to the next. 
A slow reduction in the number of unknowns and the use of the Galerkin principle to build the coarse system matrix with intergrid transfer operators using weighted averages increase the complexity of the multigrid scheme. To measure the complexity the following quantities are commonly used:
\begin{definition} 
\label{gridc}
The grid complexity  $c_G$ is the total number of variables $N_.$, on all multigrid levels, divided by the number of variables on the finest level $N_1$,
\begin{align*}
c_G = \frac{1}{N_1} \sum_{\ell = 1}^{n_{\text{levels}}} N_\ell.
\end{align*}
\end{definition}
\begin{definition} 
\label{algebrac}
The algebraic complexity $c_A$ is the total number of non-zero entries, in all matrices $A_\ell$, divided by the number of non-zero entries of the finest level operator $A_1$,
\begin{align*}
c_A = \frac{1}{\rm{nnz}(A_1)} \sum_{\ell = 1}^{n_{\text{levels}}} \rm{nnz}(A_\ell).
\end{align*}
\end{definition}
%\begin{description}
%\item[The grid complexity $c_G$] is the total number of variables $N_.$, on all multigrid levels, divided by the number of variables on the finest level $N_1$,
%\begin{align*}
%c_G = \frac{1}{N_1} \sum_{\ell = 1}^{n_{\text{levels}}} N_\ell.
%\end{align*}
%
%\item[The operator complexity $c_A$] also known as algebraic complexity, is the total number of non-zero entries, in all matrices $A_\ell$, divided by the number of non-zero entries of the finest level operator $A_1$,
%\begin{align*}
%c_A = \frac{1}{\rm{nnz}(A_1)} \sum_{\ell = 1}^{n_{\text{levels}}} \rm{nnz}(A_\ell).
%\end{align*}
%\end{description}

We will find a benefit to the convergence of constructing the ``coarse grids"  algebraically already for simple  examples of LISL matrices
(Section \ref{sec:NE_GMG}, in particular Table \ref{Tab:rhoMG}),
%Simple examples of LISL matrices illustrate the benefit of constructing the ``coarse grids" algebraically and the increasing complexity of AMG discussed in \cite{wathen2015precond}.
and that algebraic construction of the grid hierarchy deals well with the varying LISL stencils (Section \ref{subsec:perform}).
% whereas
However, there is an increase in complexity of AMG (see Table \ref{Tab:rhoMG}) mainly due to the use of interpolation in LISL discretizations.

Recent and on-going research on algebraic multigrid \cite{notay2010aggregation, notay2012aggregation} shows how one can construct good multigrid cycles using simplified ``intergrid" transfer operators based on aggregation of the unknown variables, thus avoiding the problem
of increased complexity on coarser levels. In particular, \cite{notay2012aggregation} proves convergence of a simplified two-grid scheme using aggregation for non-singular M-matrices with non-negative row and column sums. We will show that these results apply for LISL discretizations matrices and justify the use of AGMG both theoretically and empirically.

\subsection{On the spectrum of LISL matrices}

To assess the suitability of preconditioning based on geometric multigrid, we start by considering the spectrum of LISL matrices for a simplified model. %The spectrum of simple matrices and standard local Fourier analysis motivate the use of multigrid methods based on algebraic ideas. 
For illustration, we first calculate the eigenvalues and eigenvectors of the LISL discretization of the diffusion operator with constant coefficients, for any function $u : \mathbb{R}^d \to \mathbb{R}$
\begin{align} \label{eq:laplacian}
- \frac{1}{2} \nabla^T (\sigma \sigma^T) \nabla u = - \frac{1}{2} \sum_{i = 1}^d  \sigma^2_i \frac{\partial^2 u}{\partial x^2_i},
\end{align}
where $\sigma \in \mathbb{R}^{d \times d}$ is a diagonal matrix with $(\sigma)_{ii} = \sigma_i$.
%and homogeneous Dirichlet boundary conditions.

%We will prove that in the absence of interpolation, the spectrum of the LISL Laplacian with homogeneous Dirichlet boundary conditions is related to that of the standard finite difference approximation of the Laplacian. 

We start by considering the one-dimensional case on an equispaced grid $\Omega_{\Delta x}$, where $\Delta x > 0$ is the distance between two consecutive nodes. For $\sigma > 0$, we define 
\begin{align} \label{eq:def_mAlpha}
m \defeq  \left\lfloor \frac{\sigma}{\sqrt{\Delta x}} \right\rfloor, \quad \text{ and} \quad \gamma \defeq  (m+1) - \frac{\sigma}{\sqrt{\Delta x}},
\end{align}
where $m \in \mathbb{N}$ denotes the stencil length and $\gamma	\in [0, 1]$ is the interpolation weight of the one-dimensional linear interpolation operator, such that for any real function $\phi : \mathbb{R} \to \mathbb{R}$ the linear interpolation operator on $\Omega_{\Delta x}$ is $\mathcal{I}_{\Delta x} (\phi) (x_i + \sqrt{\Delta x} \sigma) = \gamma \phi(x_i + m \Delta x) + (1-\gamma) \phi(x_i + (m+1) \Delta x)$. Without loss of generality and for simplicity of the notation we assume that $\Delta x = 1$. Denote by $L_N$ the following $N \times N$ Laplacian matrix
\begin{equation*}
L_N \defeq \begin{pmatrix}
2 & -1 & 0 & \cdots & 0 \\
-1 & 2 & -1 & \cdots & 0 \\
0 & -1 & 2 & \cdots & 0 \\
\vdots & \vdots & \vdots & \ddots & \vdots \\
0 & 0 & 0 & \cdots & 2
\end{pmatrix}.
\end{equation*}
%for compactness sake we will sometimes use the stencil notation in which this matrix is represented by
%\begin{align*}
%$
%\begin{bmatrix} -1 & 2 & -1 \end{bmatrix}.
%$
%\end{align*}

Let now $m = 2$ and $\Delta x = 1$, then the LISL discretization matrix is given by
\begin{equation}
\label{eq:LSL}
L^{N, m, \gamma}_{SL} \defeq \begin{pmatrix}
2 & 0 & -\gamma & -1 + \gamma & 0 &\cdots & 0 \\
0 & 2 & 0 & -\gamma & -1 + \gamma & \cdots & 0 \\
-\gamma & 0 & 2 & 0 & -\gamma & \cdots & 0 \\
\vdots & \vdots & \vdots & \vdots & \vdots & \ddots & \vdots \\
0 & 0 & 0 & 0 & 0 & \cdots & 2
\end{pmatrix}.
\end{equation}

Noticing the structure in the diagonals, we re-write $L^{N, m, \gamma}_{SL}$
%in terms of standard finite difference matrices of smaller size and some Kronecker products. Let $L^m_n$ denote the matrix with the following stencil
as
%\begin{align*}
%\begin{bmatrix}
%- 1 & 2 & -1
%\end{bmatrix} \otimes e_1^T,
%\end{align*}
%where $e_1 \in \mathbb{R}^m$ is the vector of the $\mathbb{R}^m$ canonical basis with the first element equal to 1 and the rest to 0, then
\begin{align*}
L^{N, m, \gamma}_{SL} 
%=~ 
%& \alpha \left[ L_{\left\lceil \frac{n}{m}  \right\rceil} \otimes \begin{pmatrix}
%1 & \bm{0}_{1 \times m-1} \\
%\bm{0}_{m-1 \times 1} & \bm{0}_{m-1 \times m-1}
%\end{pmatrix} + \begin{pmatrix}
%L_{\left\lfloor \frac{n}{m}  \right\rfloor} & \bm{0}_{\left\lfloor \frac{n}{m}  \right\rfloor \times 1} \\
%\bm{0}_{1 \times \left\lfloor \frac{n}{m}  \right\rfloor} & 0
%\end{pmatrix} \otimes \begin{pmatrix}
%0 & \bm{0}_{1 \times m-1} \\
%\bm{0}_{m-1 \times 1}  & I_{m-1}
%\end{pmatrix} \right]_{n\times n} \\
%& + (1- \alpha) \left[ L_{\left\lceil \frac{n}{m +1}  \right\rceil} \otimes \begin{pmatrix}
%1 & \bm{0}_{1 \times m} \\
%\bm{0}_{m \times 1} & \bm{0}_{m}
%\end{pmatrix} + \begin{pmatrix}
%L_{\left\lfloor \frac{n}{m +1}  \right\rfloor} & \bm{0}_{\left\lfloor \frac{n}{m+1}  \right\rfloor \times 1} \\
%\bm{0}_{1 \times \left\lfloor \frac{n}{m+1}  \right\rfloor} & 0
%\end{pmatrix} \otimes \begin{pmatrix}
%0 & 0 \\
%0 & I_{m}
%\end{pmatrix} \right]_{n\times n}, \\
=~ & \gamma L^m_N + (1-\gamma) L^{m+1}_N,
\end{align*}
where $L^m_N = L^{N, m, 1}_{SL} $.

Using the properties of Kronecker products we can characterize the eigenvalues of the matrices $L^m_N$ in terms of the eigenvalues of the standard $L_N$ matrices. Denoting by $\lambda(L_N) \in \mathbb{R}^N$ and $V(L_N)\in \mathbb{R}^{N\times N}$ the eigenvalues and eigenvectors of $L_N$, respectively, we have that
\begin{align*}
\lambda(L^m_N) &= 
\left[\lambda \left( L_{\left\lceil \frac{N}{m}  \right\rceil} \right) \otimes e_1 \right]_N
+ 
\left[
\lambda \left( L_{\left\lfloor \frac{N}{m}  \right\rfloor} \right) \otimes \sum_{i=2}^N e_i \right]_N, \\
V(L^m_N)  &= \left[ V \left( L_{\left\lceil \frac{N}{m}  \right\rceil} \right)  \otimes \begin{pmatrix}
1 & 0 \\
0 & \bm{0}_{m-1}
\end{pmatrix}  \right]_{N \times N} +
   \left[
V \left( L_{\left\lfloor \frac{N}{m}  \right\rfloor} \right) \otimes \begin{pmatrix}
0 & 0 \\
0 & I_{m-1}
\end{pmatrix} \right]_{N\times N},
\end{align*}
where $e_i$ is the $i$-th canonical basis vector of $\mathbb{R}^N$,
%, i.e $(e_i)_j = \delta_{ij}$ where $\delta_{ij}= 1$ if $i = j$ and $0$ otherwise,
$I_N$ is the $N \times N$ identity matrix and $\bm{0}_{m}$ denotes the $m \times m$ zero matrix. By $[ A ]_{N \times N}$ we mean that we select the first $N$ rows and $N$ columns of $A$, and similar for $[v]_{N}$ for a vector $N$.
This is required as $N$ will in general not be a multiple of both $m$ and $m+1$ so the resulting matrices from the Kronecker product will be of size $\left\lceil \frac{N}{m}  \right\rceil m$ and $\left\lceil \frac{N}{m+ 1}  \right\rceil (m +1)$ which are greater or equal to $N$. 

In the presence of interpolation, that is, when $\gamma \in (0, 1)$, we are unable to provide any closed formula to the eigenvalues and eigenvectors of $L^{N, m, \gamma}_{SL} = \gamma L^m_N + (1-\gamma) L^{m+1}_N$. Figure \ref{fig:eigvectors} contains graphs with the eigenvalues and some eigenvectors of the matrices $L^{N, m, \gamma}_{SL}$, $L^m_N$, $L^{m+1}_N$ and $L_N$. %the $n \times n$ standard discretization of the 1D Laplacian. 
The plots show that for LISL discretization matrices, in contrast to the standard case, small eigenvalues are not necessarily associated with smooth modes.
As a result, these components cannot be represented accurately on the coarse mesh.

%\begin{figure}[htp]
%	\centering
%	\includegraphics[width=0.9\textwidth]{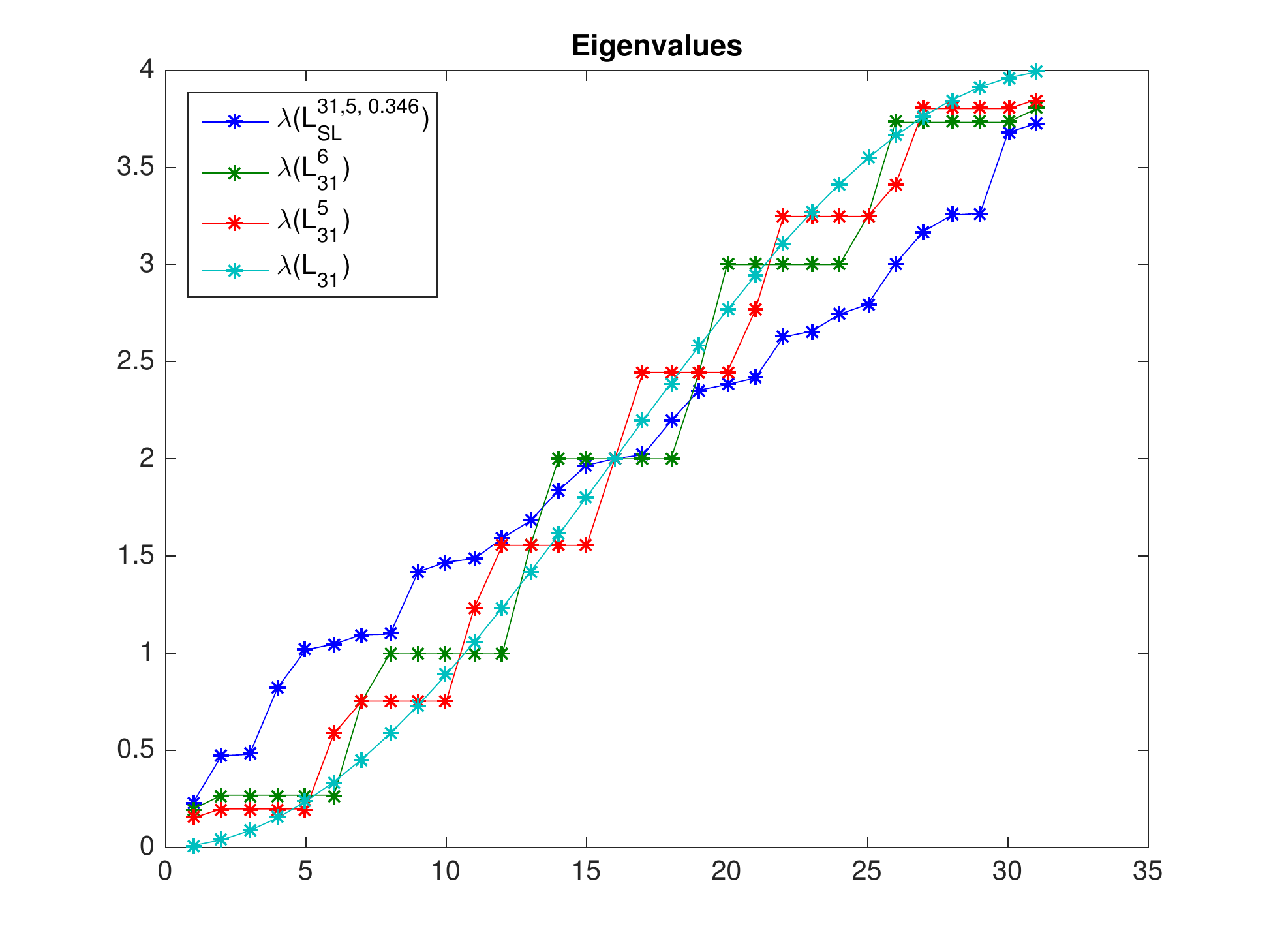}
%	\caption{ As commented, we observe that $\lambda(L^{n, m, \alpha}_{SL})$ are not the simple weighted average of $\lambda(L^m_n)$ and $\lambda(L^{m+1}_n)$. We can, however, characterize the eigensystem of $L^m_n$ and $L^{m+1}_n$ in terms of the eigensystem of the following standard Laplacian discretizations $L_7$, $L_6$ and $L_5$.}
%	\label{fig:eigenvalues_32}
%\end{figure}

% New version : /Users/julen/Dropbox/DPhil/MATLAB/multigrid/eig_figures_paper.m
\begin{figure}[htp]
	\centering
	\subfloat[Comparison of the eigenvalues of $L^{31, 5, 0.346}_{SL}$, $L^5_{31}$, $L^6_{31}$ and $L_{31}$
	in increasing order.]
	{\label{subfig:eigenvalues}\includegraphics[width=0.49\textwidth]{eigenvalues_32_new_leg-eps-converted-to.pdf}}
	\hspace{\stretch{1}}
	\subfloat[Eigenvectors corresponding to the smallest eigenvalue for $L^{31, 5, 0.346}_{SL}$, $L^5_{31}$, $L^6_{31}$ and $L_{31}$.]
	{\label{subfig:eigvector_1}\includegraphics[width=0.49\textwidth]{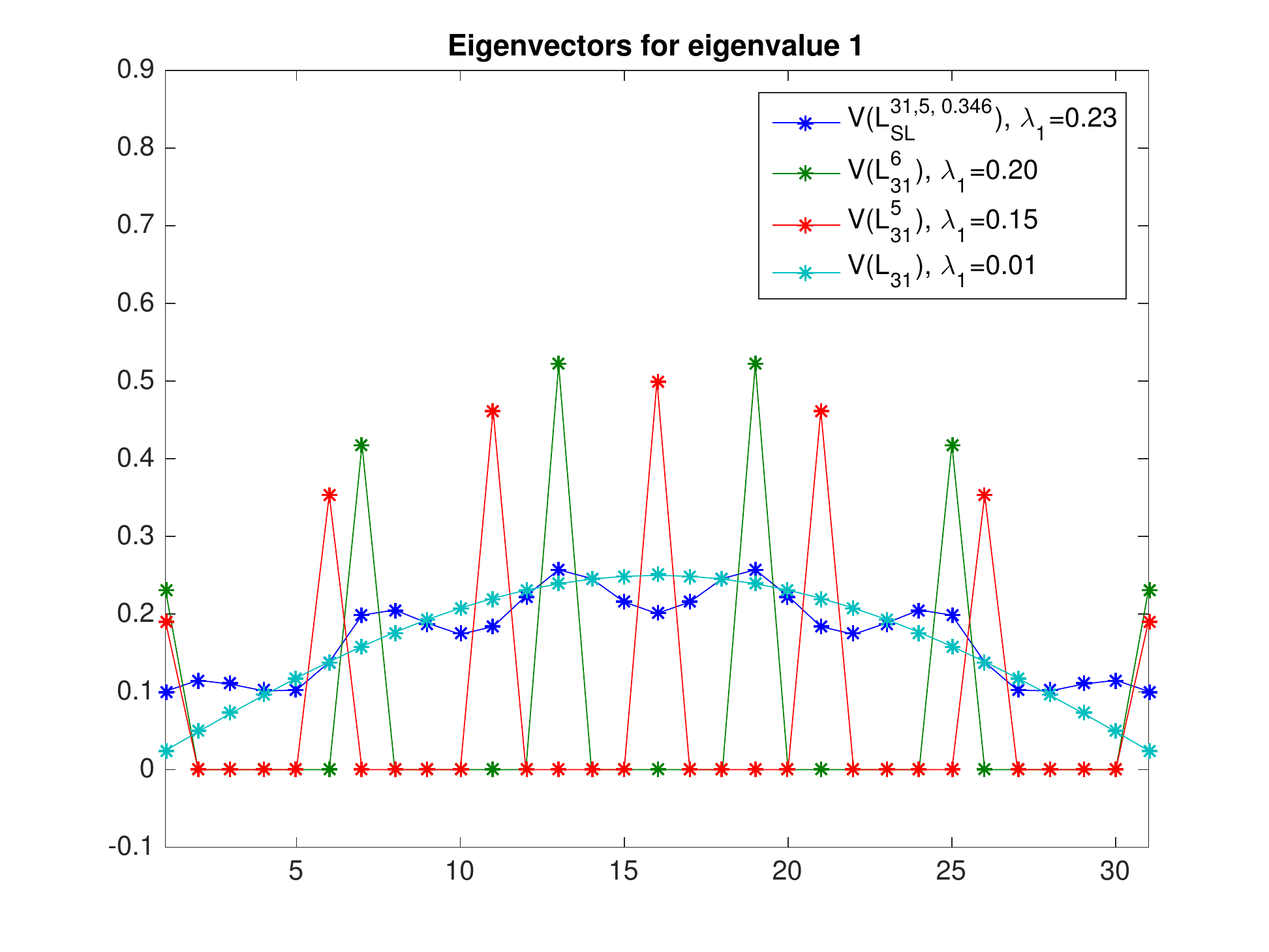}}
	\\
	\vspace{2em}
		\subfloat[Eigenvectors corresponding to the 15-th eigenvalue %(where 1 is the smallest) 
		for $L^{31, 5, 0.346}_{SL}$, $L^5_{31}$, $L^6_{31}$ and $L_{31}$.]
	{\label{subfig:eigvector_15}\includegraphics[width=0.49\textwidth]{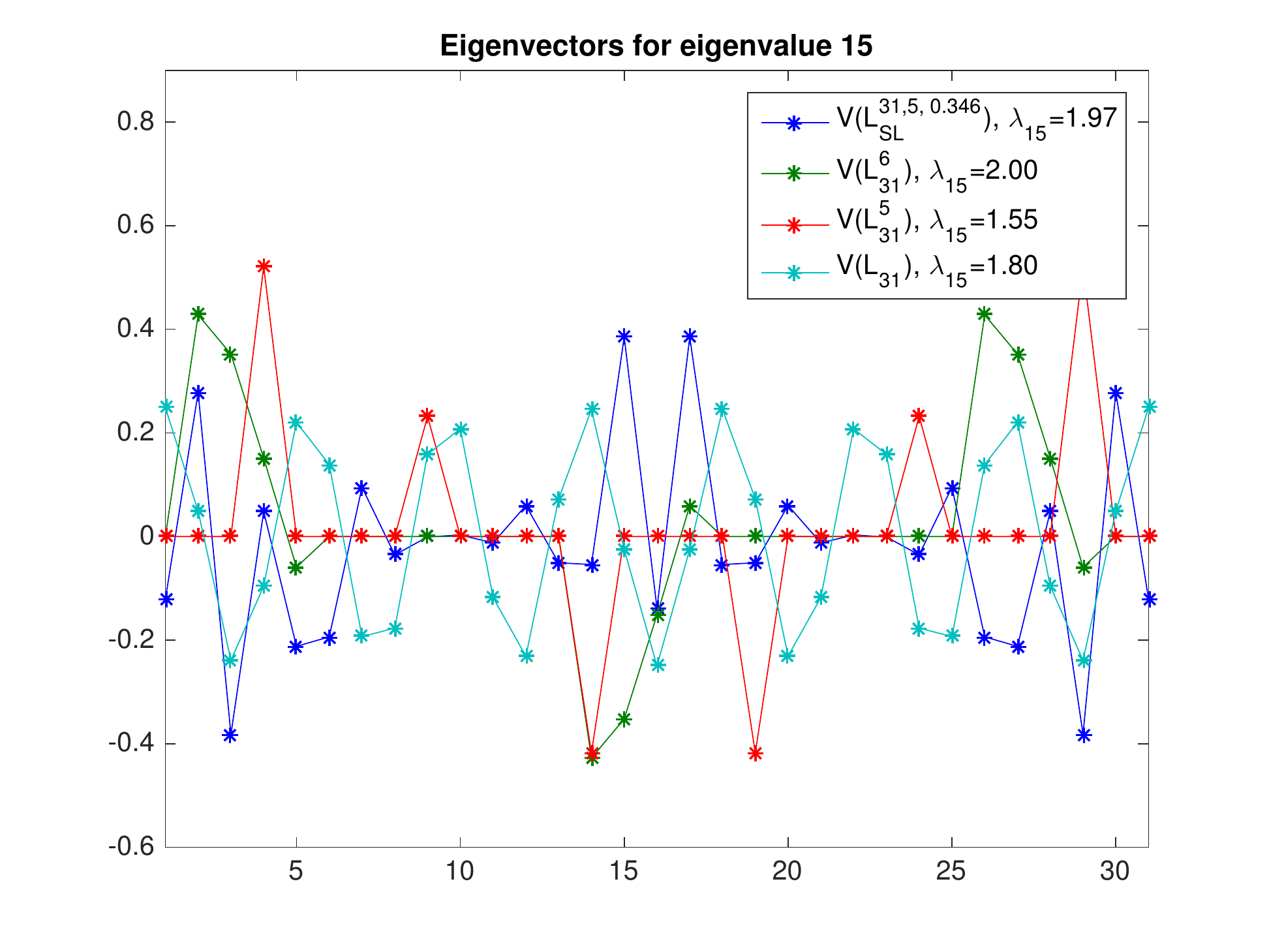}}
	\hspace{\stretch{1}}
	\subfloat[Eigenvectors corresponding to the largest eigenvalue for $L^{31, 5, 0.346}_{SL}$, $L^5_{31}$, $L^6_{31}$ and $L_{31}$.]
	{\label{subfig:eigvector_31}\includegraphics[width=0.49\textwidth]{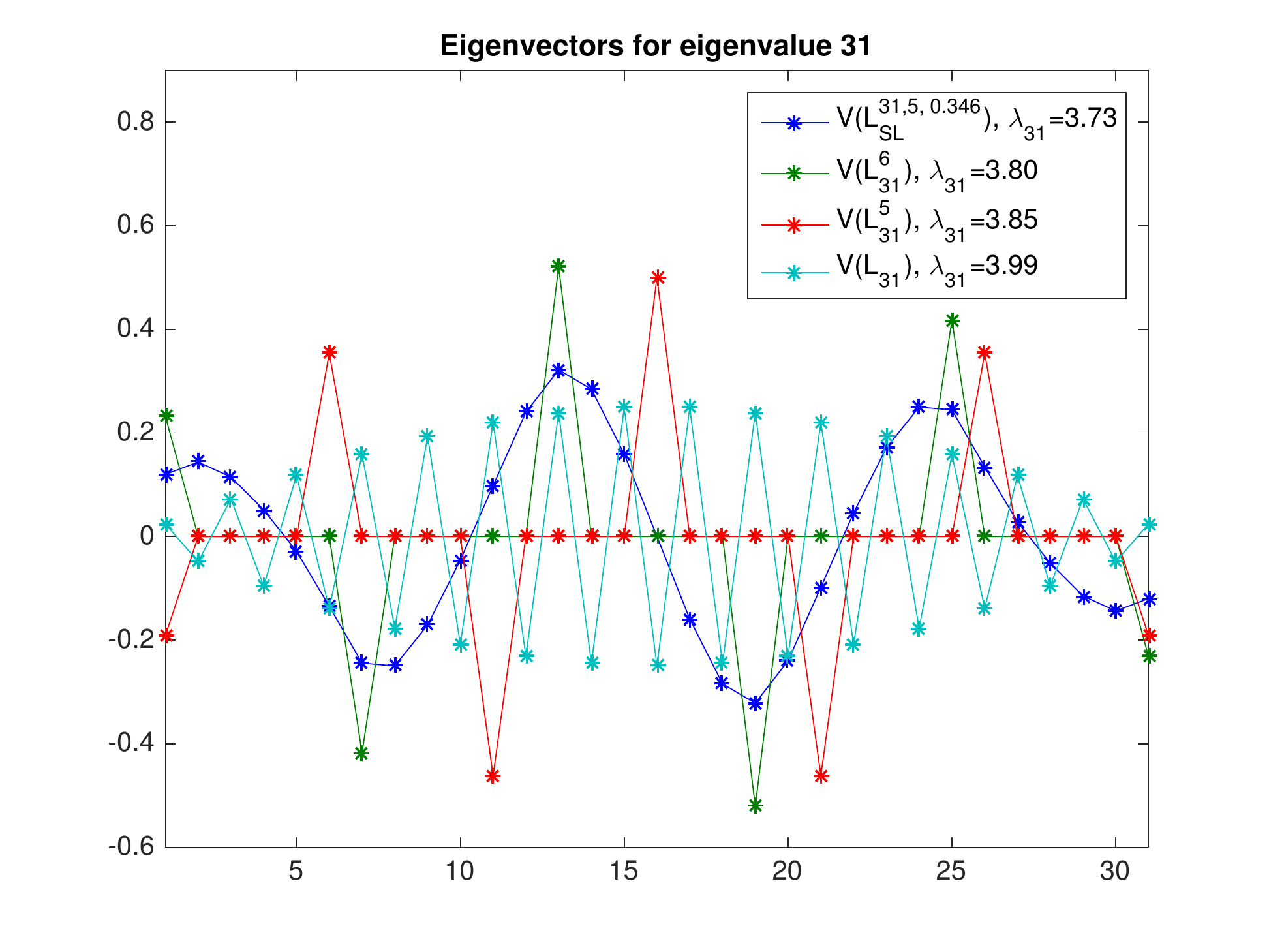}}
	\caption{Eigenvalues of $L^{N, m, \gamma}_{SL}$, $L^m_N$, $L^{m+1}_N$ and $L_N$ with parameter values $N = 31$, $m = 5$ and $\gamma = 0.346$ and the eigenvectors corresponding to three eigenvalues of the same matrices. 
	%In the standard FD discretization of the Laplacian highly oscillatory eigenvectors are associated to large eigenvalues, the plots show that is relationship is not true for LISL discretizations.
	}
	\label{fig:eigvectors}
\end{figure}

The spectrum of higher-dimensional constant coefficient Laplacians can be inferred from the spectrum of the one-dimensional matrices by means of Kronecker products. Next, we consider the properties of common smoothers when applied to LISL discretization matrices of the two-dimensional Laplacian and conclude with an example illustrating the impact of the diffusion coefficient on the convergence of geometric multigrid cycles.

\subsection{Local Fourier analysis of the smoothers}

We seek to analyse how a varying size stencil affects the properties of the standard Gauss-Seidel smoother. 
%We recall that for geometric multigrid to be effective, it is key to design a smoother that complements the coarse grid correction. 
We base the analysis on Local Fourier Analysis (LFA) as described in Chapter 4 of \cite{trottenberg2001multigrid} and state the \textit{smoothing factors} $\mu_{\text{loc}}$ of Gauss-Seidel iterations when applied to wide stencil finite difference discretizations. The key to the analysis is the use of grid functions of the form $\varphi(\bm{\theta}, \bm{x}) = e^{i \bm{\theta} \cdot \bm{x}}$, where $i$ is the imaginary unit, $\bm{x} \in \mathbb{R}^d$, $\bm{\theta} \in [-\pi, \pi)^d$ and $\cdot$ is the inner product for vectors in $\mathbb{R}^d$. For simplicity we consider equispaced grids $\Omega_{\Delta x}$ with refinement parameter $\Delta x > 0$. Therefore, any $\bm{x} \in \Omega_{\Delta x}$ can be written as $\bm{x} \equiv \bm{x}_0 + \kappa \Delta x$ for some fixed $\bm{x}_0 \in \Omega_{\Delta x}$ and $\kappa \in \mathbb{Z}^d$. It is thus convenient to rescale the exponent of $\varphi$ by $\Delta x^{-1}$.

The functions $\varphi$ are important since, as shown in Lemma 4.2.1 of \cite{trottenberg2001multigrid}, ``all grid functions $\varphi(\bm{\theta}, \bm{x})$ are (formal) eigenfunctions of any discrete operator which can be described by a difference stencil". This property allows us to associate to each discrete finite difference operator $L_{\Delta x}$ a so-called symbol $\tilde{L}_{\Delta x}(\bm{\theta})$ defined by
\begin{align}
\label{symbol}
L_{\Delta x} \varphi(\bm{\theta}, \bm{x}) = \sum_{\kappa \in \mathbb{Z}^d} s_{\kappa} e^{i\bm{\theta} \cdot \kappa} = \tilde{L}_{\Delta x}(\bm{\theta}) e^{i \bm{\theta} \cdot \kappa},
\end{align}
where $s_\kappa \in \mathbb{R}$ is the finite difference coefficient at the location $\kappa$ with respect to the node $\bm{x}_0$.

As in \cite{trottenberg2001multigrid}, we consider smoothers formed by a splitting $L_{\Delta x} = L_{\Delta x}^+ + L_{\Delta x}^-$ of the discrete operator, i.e.\
%\begin{align*}
%L_{\Delta x} u_{\Delta x} = f_{\Delta x}, \qquad L^+_{\Delta x} \bar{w}_{\Delta x} + L^-_{\Delta x} w_{\Delta x} = f_{\Delta x},
%\end{align*}
%where $w_{\Delta x}$ corresponds to the old approximation of $u_{\Delta x}$ and $\bar{w}_{\Delta x}$ to the new approximation. 
%Expressing the above equation in terms of the errors,
%The smoothing operator %$S_{\Delta x}$ 
%is then defined as
\[
S_{\Delta x} = \left(L^+_{\Delta x}\right)^{-1} L^-_{\Delta x}.
\]
%\begin{equation*}
%L^+_{\Delta x} \bar{v}_{\Delta x} + L^-_{\Delta x} v_{\Delta x} = 0 \Longrightarrow \bar{v}_{\Delta x} = S_{\Delta x} v_{\Delta x},
%\end{equation*}
%where $\bar{v}_{\Delta x} = u_{\Delta x} - \bar{w}_{\Delta x}$ and $v_{\Delta x} = u_{\Delta x} - w_{\Delta x}$ are the errors between the exact and the iterative approximations. 
Lemma 4.3.1 in \cite{trottenberg2001multigrid} derives the expression for the symbol for the smoother as
\[
\tilde{S}_{\Delta x}(\bm{\theta}) \defeq \frac{\tilde{L}^-_{\Delta x}(\bm{\theta})}{\tilde{L}^+_{\Delta x}(\bm{\theta})},
\]
where $\tilde{L}^+_{\Delta x}$ and  $\tilde{L}^-_{\Delta x}$ are defined as for $L_{\Delta x}$ in (\ref{symbol}).

With multigrid, the objective of the smoother is to dampen error components not reduced by the coarse grid correction. Therefore, assessing the properties of a given smoother requires fixing the coarse grid correction. We limit the study to the simplest coarsening strategy, that is if $\Omega_{\Delta x}$ is the fine grid then $\Omega_{2\Delta x}$ is the coarse grid. %Such a coarsening 
This leads to the definition of low and high frequencies below.

\begin{definition}[Definition 4.2.1 in \cite{trottenberg2001multigrid}]
\label{def:freqs} 
For the coarsening considered, we define the high and low frequencies as follows:
\begin{align*}
 \varphi(\bm{\theta},\cdot) \text{ low frequency component } & \Longleftrightarrow \bm{\theta} \in T^{\text{low}} \defeq \left[  -\frac{\pi}{2}, \frac{\pi}{2}\right)^d; \\ 
 \varphi(\bm{\theta},\cdot) \text{ high frequency component } &  \Longleftrightarrow \bm{\theta} \in T^{\text{high}} \defeq [-\pi, \pi)^d {\Big \backslash} \left[  -\frac{\pi}{2}, \frac{\pi}{2}\right)^d.
\end{align*}
\end{definition}

%Definition \ref{def:freqs} leads to the definition of

\begin{definition}[Definition 4.3.1 in \cite{trottenberg2001multigrid}]
The smoothing factor %$\mu_{\text{loc}}$ 
for standard coarsening is
\begin{equation*}
\mu_{\text{loc}}= \mu_{\text{loc}}(S_{\Delta x}) \defeq \sup \left\{ | \tilde{S}_{\Delta x}(\bm{\theta}) | : \bm{\theta} \in T^{\text{high}} \right\}.
\end{equation*}
\end{definition}

We employ these definitions to compare the smoothing factors for the standard two-dimensional Laplacian, setting $d = 2$, discretised using standard local finite differences and the LISL discretization.

\begin{example}[Example 4.3.4  in \cite{trottenberg2001multigrid}]
\label{ex:mu_GS_STD}
%To calculate 
The smoothing factor for the Gauss-Seidel smoother for the standard Laplacian discretisation
%  we need the following auxiliary symbols
%
%\begin{align*}
%L^+_{\Delta x} & = \frac{1}{h^2}
%\begin{bmatrix}
% & 0 & \\
%-1  & 4 & 0 \\
% & -1 &
%\end{bmatrix},  \quad & L^-_{\Delta x} & = \frac{1}{h^2}
%\begin{bmatrix}
% & -1 & \\
%0  & 0 & -1 \\
% & 0 &
%\end{bmatrix},\\
%L^+_{\Delta x} e^{i \theta \cdot x/h} & = \frac{1}{h^2} (4 - e^{-i \theta_1} - e^{-i \theta_2}) e^{i \bm{\theta} \cdot \bm{x}/h}, & \tilde{L}^-_{\Delta x}(\bm{\theta}) &= - \frac{1}{h^2}(e^{i \theta_1} + e^{i \theta_2}). \\
%\end{align*}
%
%Hence,
%
is given by
\begin{equation*} %\label{eq:s}
\mu_{\text{loc}} = \sup \left\{ \left| \frac{e^{i \theta_1} + e^{i \theta_2}  }{4 - e^{-i \theta_1} - e^{-i \theta_2}} \right| : \bm{\theta} \in T^{\text{high}} \right\}.
\end{equation*}
\end{example}

Similarly, the smoothing factor for the LISL scheme can be derived. In the present case of pure diffusion, Schemes 1--3 coincide.

\begin{example} Proceeding as in \cite{trottenberg2001multigrid} for Example \ref{ex:mu_GS_STD}, the symbols $L^+_{\Delta x}$ and $L^-_{\Delta x}$ for the LISL discretizations are
\begin{align*}
\tilde{L}^+_{\Delta x}(\bm{\theta}) &= \frac{1}{\Delta x} (4 - \gamma_1 e^{-i m_1 \theta_1} - (1-\gamma_1) e^{-i (m_1 + 1) \theta_1} - \gamma_2 e^{-i m_2 \theta_2} - (1-\gamma_2) e^{-i (m_2 + 1) \theta_2} ),  \\
\tilde{L}^-_{\Delta x}(\bm{\theta}) &=  - \frac{1}{\Delta x}(\gamma_1 e^{i m_1 \theta_1} + (1-\gamma_1) e^{i (m_1 + 1) \theta_1} + \gamma_2 e^{i m_2 \theta_2} + (1-\gamma_2) e^{i (m_2 + 1) \theta_2} ),
\end{align*}
where $m_i$ and $\gamma_i$ are given by \eqref{eq:def_mAlpha} replacing $\sigma$ by $\sigma_i$.
For compactness of notation, define
\begin{equation*}
g(\theta, \gamma, m) \defeq \gamma e^{i m \theta} + (1-\gamma) e^{i (m+1) \theta},
\end{equation*}
then the smoothing factor for a Gauss-Seidel smoother with standard coarsening and the LISL scheme is given by
\begin{equation} \label{eq:smoothingfactorGS}
\mu_{\text{loc}}(S_{\Delta x}^{SL}) = \sup \left\{ \left| \frac{ g_1 + g_2  }{4 - \bar{g_1} - \bar{g_2}} \right| : \bm{\theta} \in T^{\text{high}} \right\},
\end{equation}
for $\bm{\theta} \in T^{\text{high}}$, where $g_1 \equiv g(\theta_1, \gamma_1, m_1)$, $g_2 \equiv g(\theta_2, \gamma_2, m_2)$, and $\bar{c}$ denotes the complex conjugate of the complex number $c$.
\end{example}

From \eqref{eq:smoothingfactorGS} we see that as the non-locality of the discretization grows, i.e.\ $m_. \to \infty$ then the smoothing factor approaches 1 (no smoothing) and so highly oscillatory modes will be transferred to the coarser subspace. Figure \ref{fig:LFA_GS} compares the smoothing factor for the fixed stencil 5 point discretization and a specific semi-Lagrangian stencil.

% lfa.m
\begin{figure}[htp]
	\centering
	\subfloat[Smoothing factor Gauss-Seidel applied to standard FD approximation of the Laplacian.]
	{\label{subfig:lfa_std_GS_2D}\includegraphics[width=0.49\textwidth]
	{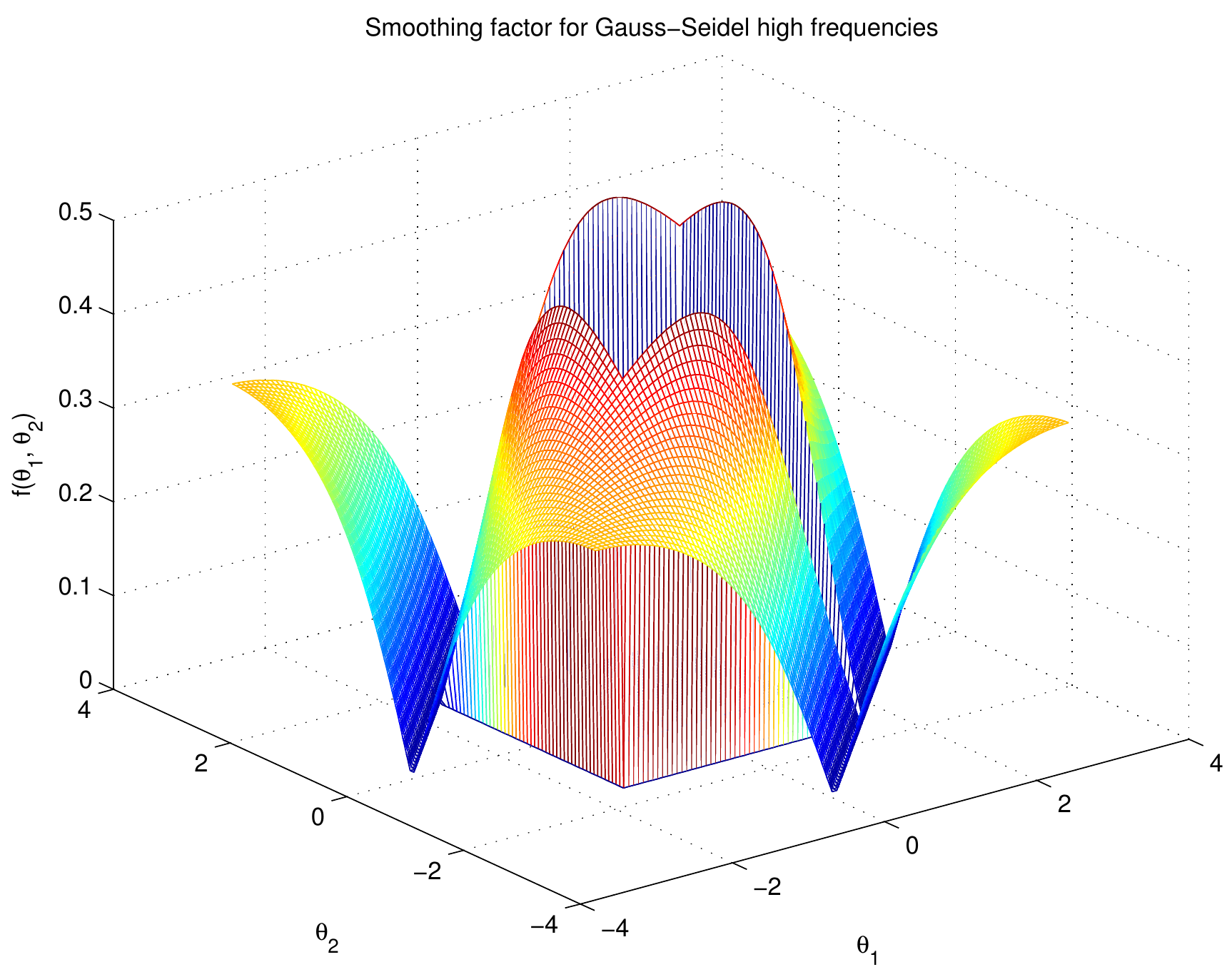}}
	\hspace{\stretch{1}}
	\subfloat[Smoothing factor Gauss-Seidel applied to LISL approximation of the Laplacian with stencil parameters $((m_1 = 9, m_2 = 3), (\gamma_1 = 0.5, \gamma_2 = 1)) $.]
	{\label{subfig:lfa_LISL_GS_2D}\includegraphics[width=0.49\textwidth]
	{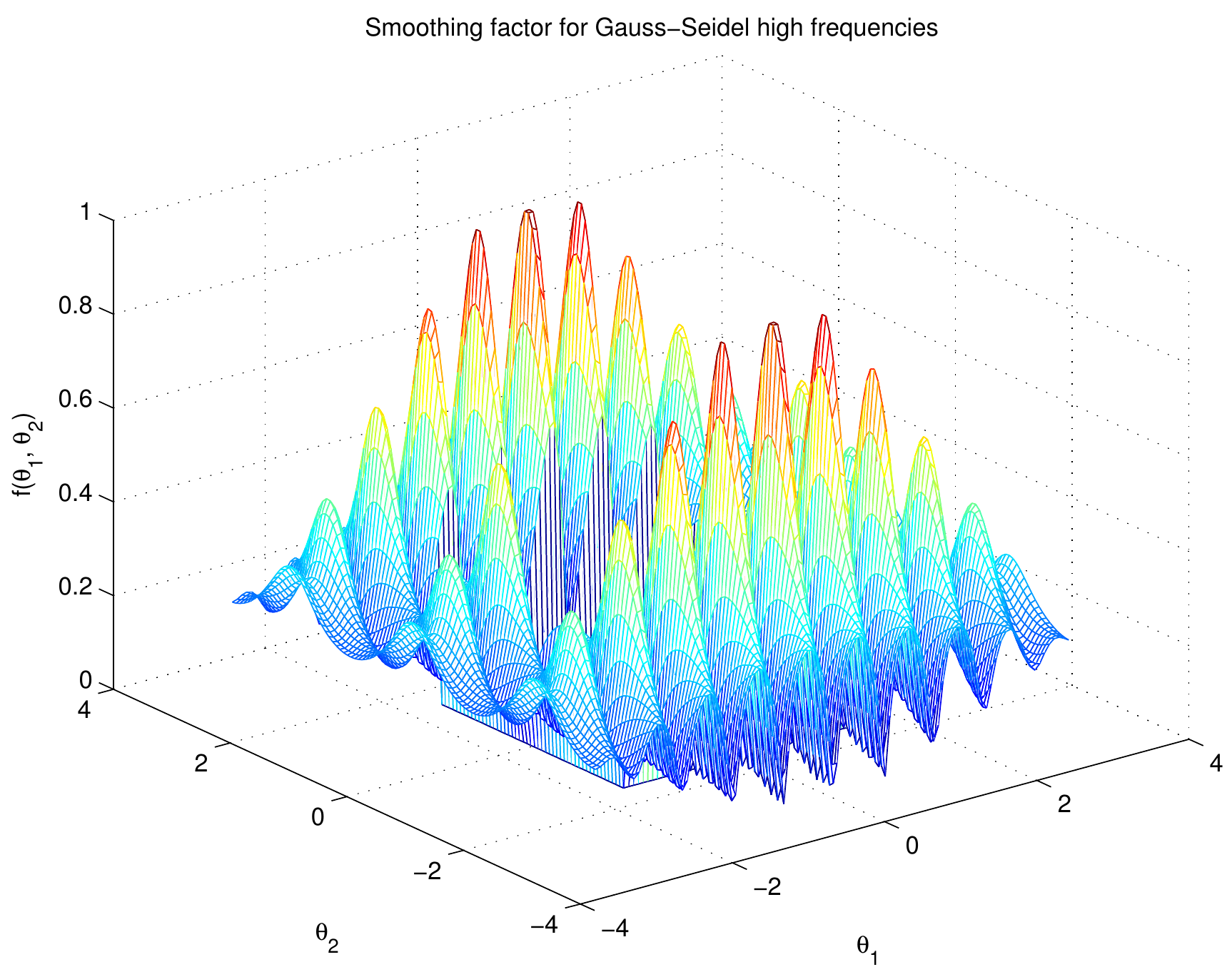}}
	\caption{Representation of the smoothing factor for high frequencies, i.e.\ $\mathbf{\theta} \in [-\pi, \pi]^2 \setminus	[-\pi /2, \pi /2]^2$, for the Gauss-Seidel iteration for the classical fixed stencil Finite Difference (FD) and the LISL schemes of the two-dimensional Laplacian operator. The maxima calculated numerically are 0.49 (theoretical value is 0.5) for the fixed stencil FD and 0.95 for the LISL scheme (lower is better).}
	\label{fig:LFA_GS}
\end{figure}

\begin{example} We can generalise the results in the previous example to the case of diffusion given by a vector $(\sigma_1, \sigma_2)^T$ not necessarily parallel to any of the axes. If $\sigma_1$ and $\sigma_2$ have the same sign, then
\begin{align*}
\tilde{L}^+_{\Delta x}(\bm{\theta}) = \frac{1}{\Delta x} (2 - \bar{g}(\theta_1, \gamma_1, m_1) \bar{g}(\theta_2, \gamma_2, m_2) ),  \;\;\;\;\; 
\tilde{L}^-_{\Delta x}(\bm{\theta}) =  - \frac{1}{\Delta x}(g(\theta_1, \gamma_1, m_1) g(\theta_2, \gamma_2, m_2) ).
\end{align*}
If, however, $\sigma_1$ and $\sigma_2$ have different signs, then
\begin{align*}
\tilde{L}^+_{\Delta x}(\bm{\theta}) = \frac{1}{\Delta x} (2 - g(\theta_1, \gamma_1, m_1) \bar{g}(\theta_2, \gamma_2, m_2) ),  \;\;\;\;\; 
\tilde{L}^-_{\Delta x}(\bm{\theta}) =  - \frac{1}{\Delta x}(\bar{g}(\theta_1, \gamma_1, m_1) g(\theta_2, \gamma_2, m_2) ).
\end{align*}
%In both cases, 
To account for the fact that $\sigma_\cdot$ can be negative, we re-define $m_i \defeq \left\lfloor |\sigma_i|/\sqrt{\Delta x} \right\rfloor$ and $\gamma_i \defeq  (m_i+1) -|\sigma_i|/\sqrt{\Delta x}$. The deterioration of the smoother for large $m_i$ is present here too. 
\end{example}

\subsection{Performance of geometric multigrid}
\label{sec:NE_GMG}

We conclude the discussion of geometric multigrid by testing its performance against
an iterative solver used in \cite{ma2014unconditionally}, i.e.\ BICGSTAB \cite{BICGSTAB} with and without ILU(0)\footnote{Incomplete LU factorization with the same sparsity pattern as the original system matrix.} as preconditioner \cite{SaadIterative}, 
and algebraic multigrid algorithms, namely,
the classical Ruge-St{\"u}ben AMG \cite{ruge1987algebraic} using our own implementation, and AGMG from \cite{notay2012aggregation}, using the implementation from \cite{notayweb}.

As benchmark examples, we choose a linear system $Ax = b$ whose coefficient matrix is the LISL discretization of \eqref{eq:laplacian} in the two-dimensional square $[0, 1]^2$ with Dirichlet boundary conditions, and $\sigma = 2 I_2$ and $\sigma = \sqrt{5} I_2$, respectively, where $I_2$ is the $2\times 2$ identity matrix. 
These values are chosen to study the effect of interpolation (which is always required in the second case and only for odd levels in the first) on the convergence and complexity of the methods, in particular on the convergence rates of geometric multigrid and the operator complexity of algebraic multigrid.

We use a Cartesian grid with equal number of equispaced nodes in both directions, the smoother is Gauss-Seidel, the prolongation operator bilinear interpolation, the restriction the transpose of the prolongation, and the coarse grid  operator is constructed using the Galerkin principle. 

Figure \ref{fig:geometric_GS} presents the reduction of the residual, $r_k \equiv \|b - Ax^k\|_2$, against the number of iterations $k$ with $x^0 = 0$, for a discrete mesh where the distance between two consecutive nodes is $\Delta x = 2^{-8}$. The algorithm is stopped whenever the relative residual, $\| b - A x^k\|_2/\|b\|_2$, measured in the Euclidean norm, is below the prescribed tolerance, in this case $10^{-6}$.
%, or when the number of iterations exceeds the maximum allowed, which is set to 30.

\begin{figure}[ht]
%	\centering
%	\subfloat[Residual at the end of the $k$-th iteration of geometric and algebraic multigrid cycles for $\sigma = I_2$.]
%	{\label{subfig:MG_1}\includegraphics[width=0.49\textwidth]{geometricMG_1-eps-converted-to.pdf}}
%	\\
%	\vspace{2em}
	\subfloat[Residual after the $k$-th iteration for $\sigma = 2 I_2$.]
	{\label{subfig:MG_2}\includegraphics[width=0.49\textwidth]
	{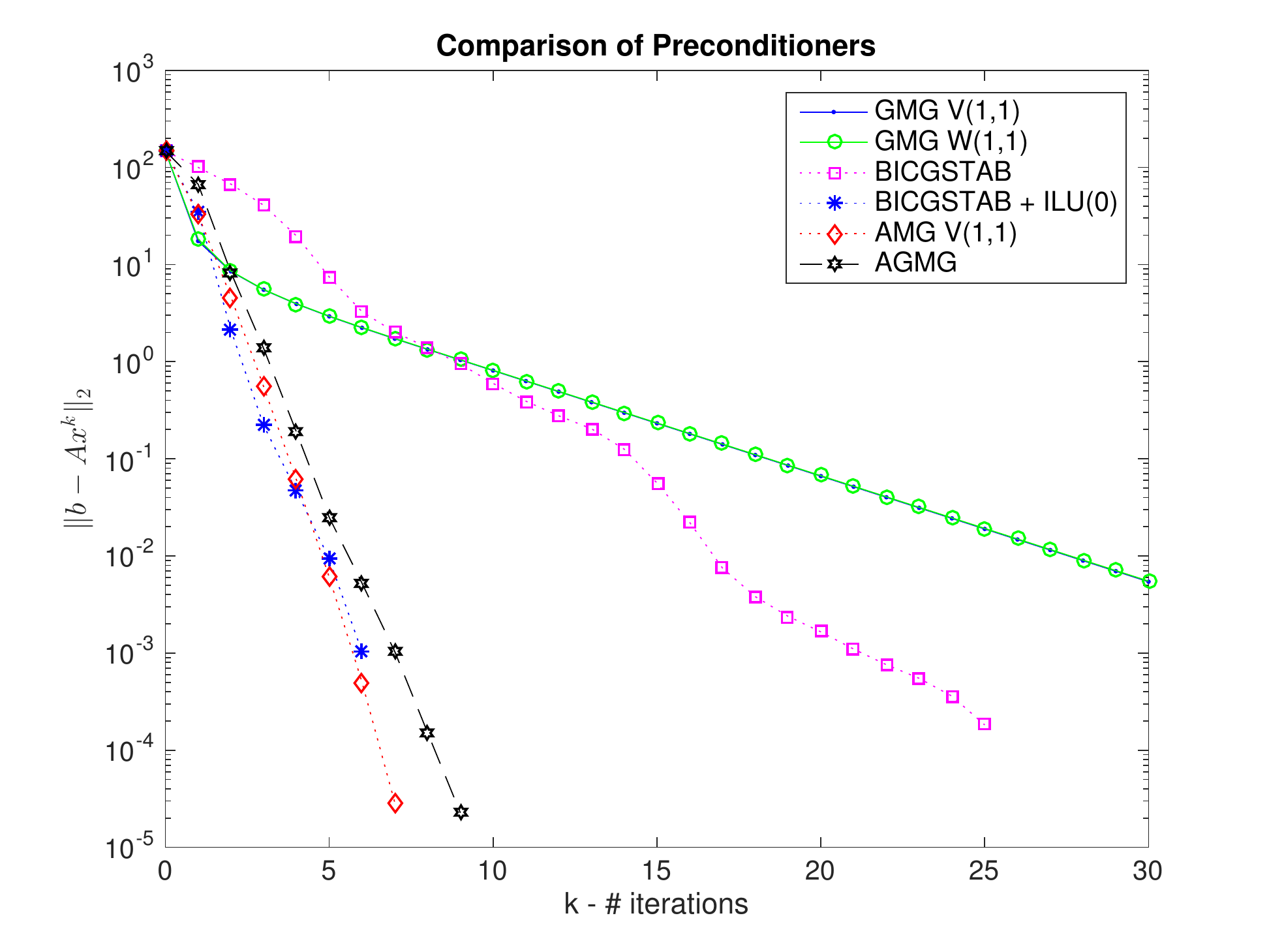}}
	\hspace{\stretch{1}}
	\subfloat[Residual after the $k$-th iteration for $\sigma = \sqrt{5} I_2$.]
	{\label{subfig:MG_sqrt5}\includegraphics[width=0.49\textwidth]
	{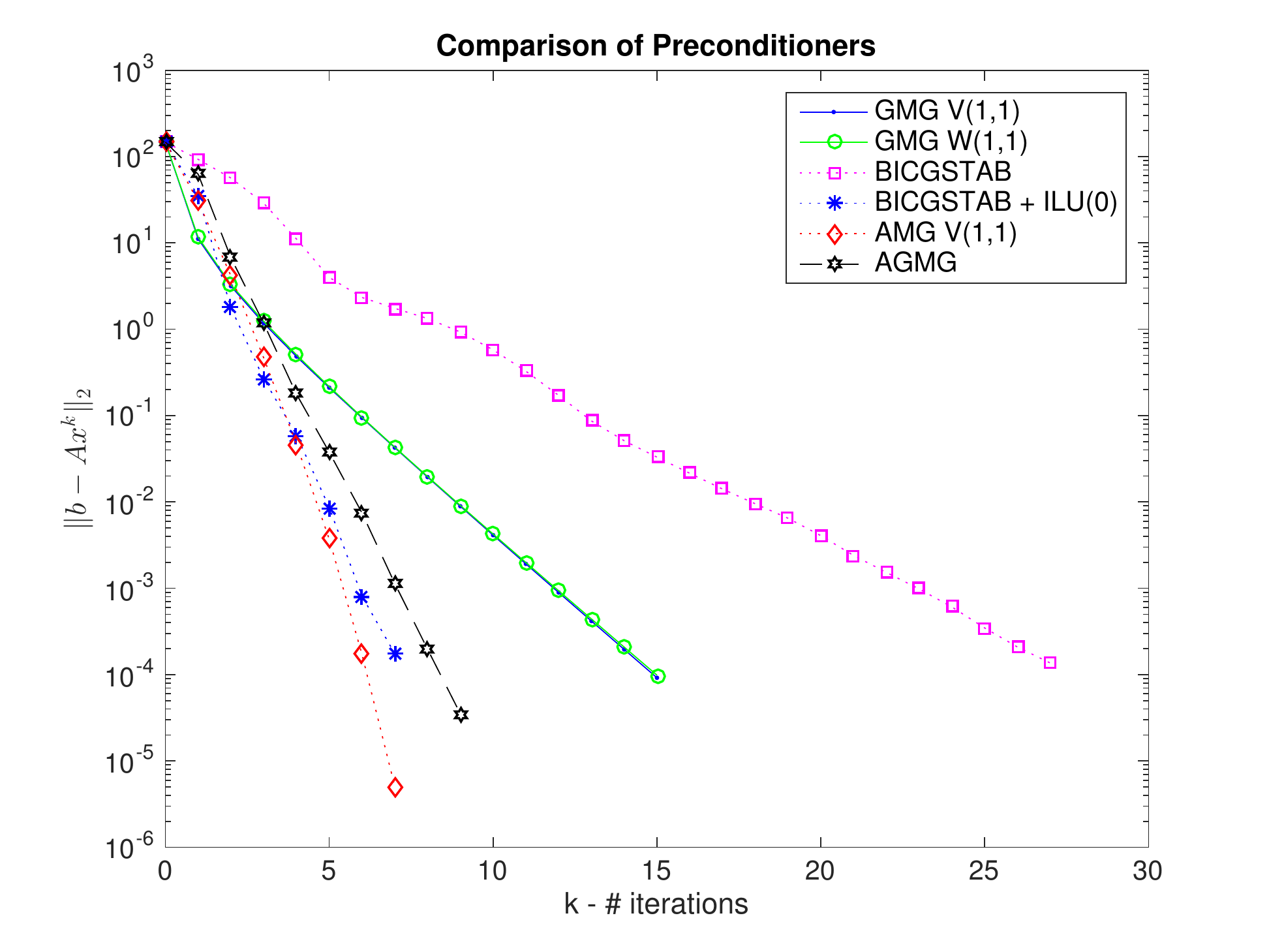}}
	\caption{Residual $\|b - Ax^k\|_2$ in the Eucledian norm at the end of the $k$-th iteration of different geometric and algebraic multigrid cycles when solving \eqref{eq:laplacian} on equispaced Cartesian grid of $[0, 1]^2$ with 257 nodes per dimension and with homogeneous Dirichlet boundary conditions. Geometric $V(\nu_1, \nu_2)$ and $W(\nu_1, \nu_2)$ cycles are considered, where $\nu_1$ and $\nu_2$ denote the number of pre- and post-smoothing steps. Their performance is compared to the iterative method BICGSTAB with and without preconditioner, and to two algebraic algorithms, AMG and AGMG from \cite{ruge1987algebraic} and
	\cite{notay2012aggregation}, respectively (see also Sections \ref{subsec:agmg} and \ref{subsec:perform}). Notice the almost overlapping of lines for geometric $V(\nu_1, \nu_2)$ and $W(\nu_1, \nu_2)$ cycles for equal $\nu_1$ and $\nu_2$ (see also Table \ref{Tab:rhoMG}, which shows almost identical rates for $\ell = 8$).}
	\label{fig:geometric_GS}
\end{figure}

%From the reduction of the residual we can infer a measure of the performance of the multigrid cycle. This measure, defined as $\rho = \frac{r_{k+1}}{r_k}$, denotes the 
Next, we study the residual reduction factor $\rho = (r_k/r_0)^{1/k}$, where $k$ is the number of iterations required for the prescribed tolerance.
%Clearly, the more efficient the algorithm, the closer $\rho$ is to 0, and we are hoping for mesh size independent convergence.
%To check how $\rho$ varies with the mesh size,
We solve the problems above for different  refinement levels $\ell$, where the number of nodes per dimension is $2^\ell$ +1. We observe 
in Table \ref {Tab:rhoMG} that for even $\ell$ the convergence factor $\rho$ corresponding to geometric multigrid cycles for $\sigma = 2 I_2$ is significantly worse than that for $\sigma = \sqrt{5} I_2$. This is due to the lack of interpolation when $2^\frac{\ell}{2} \in \mathbb{N}$, as the
step
% length of the stencil 
is a multiple of $\Delta x$, so for any mesh node $x_{\Delta x} \in \Omega_{\Delta x}$ $x_{\Delta x} \pm \sqrt{\Delta x} \sigma_i \in \Omega_{\Delta x}$. The lack of interpolation, and the equal stencil lengths in both directions, gives $\mu_{\text{loc}} = 1$ in (\ref{eq:smoothingfactorGS}). Moreover, as shown in Figure \ref{fig:eigvectors}, the eigenvectors corresponding to small eigenvalues are highly oscillatory and hence not resolved sufficiently on the coarse mesh. 

Regarding the BICGSTAB iterative solver, we observe the benefit of using ILU(0) as preconditioner, however, the significant increase in the convergence rate as the mesh is refined (and hence the condition number of the matrix increases) suggests that convergence is not asymptotically mesh size independent.
%this is not a robust solver, where \textit{robust} is understood as explained in the introduction. 
To further illustrate this, Table \ref{Tab:rhoMG1D} contains the residual reduction factors for AGMG and BICGSTAB with and without preconditioner when solving \eqref{eq:laplacian} in the one-dimensional domain $[0, 1]$ with homogeneous Dirichlet boundary conditions and discretized using the LISL scheme. We consider the cases $\sigma= 2$ and $\sigma=\sqrt{5}$. As discussed previously, for $\sigma = 2$ and $\ell$ even, no interpolation is required. In this case, ILU(0) exactly factorises the system matrix $A$ and hence $\rho = 0$. 
However, when interpolation is needed, $\rho$ approaches 1 for BICGSTAB as the mesh is refined but not for AGMG. 
Furthermore, for $\ell = 21$ BICGSTAB and BICGSTAB with ILU(0) require approximately 86 and 21 times the time required by AGMG for the relative residual to be below $10^{-6}$. 
Additionally, let $t_{tol}$ be the time required for the relative residual to be below $10^{-6}$ and $N$ the number of unknowns, assuming that $t_{tol} \sim \mathcal{O}(N^a)$, empirically we observe that $a$ equals 1.11 for AGMG, 1.50 for BICGSTAB and 1.36 for BICGSTAB with ILU(0) as preconditioner.

\begin{table}[p]\captionsetup[subfloat]{position=top}
\begin{center}
\subfloat[$\rho$ for $\sigma = 2I_2$]{
\begin{tabular}{cc|c|c|c|c|c|c}
\multirow{2}{*}{$\ell$}& \multirow{2}{*}{$m$} & \multirow{2}{*}{GMG V(1,1)}&\multirow{2}{*}{GMG W(1,1)}&\multirow{2}{*}{AMG}& \multirow{2}{*}{AGMG} & \multirow{2}{*}{BICGSTAB} & BICGSTAB \\ 
&  & & & & & & with ILU(0) \\
\hline
6  & 16 & 0.4193  &  0.4204  &    0.0415  &  0.1015 & 0.2502 & 0.0151 \\
7  & 22 & 0.2612  &  0.2666  &    0.0633  &  0.1502 & 0.5158 & 0.0767 \\
8  & 32 & 0.7561  &  0.7564  &    0.0981  &  0.1551 & 0.5763 & 0.1234 \\
9  & 45 & 0.5076  &  0.4905  &    0.1216  &  0.1858 & 0.7109 & 0.2392 \\
10 & 64 & 0.8823  &  0.8841  &    0.1219  &  0.2001 & 0.7621 & 0.3382 
\end{tabular}
}
\\
\subfloat[$\rho$ for $\sigma = \sqrt{5}I_2$]{
\begin{tabular}{cc|c|c|c|c|c|c}
\multirow{2}{*}{$\ell$}& \multirow{2}{*}{$m$} & \multirow{2}{*}{GMG V(1,1)}&\multirow{2}{*}{GMG W(1,1)}&\multirow{2}{*}{AMG}& \multirow{2}{*}{AGMG} & \multirow{2}{*}{BICGSTAB} & BICGSTAB \\ 
&  & & & & & & with ILU(0) \\
\hline
6	& 17 & 0.2999  &  0.2857  & 0.0403  &  0.1162 & 0.3718  &  0.0262 \\
7 	& 25 & 0.2565  &  0.2568  & 0.0532  &  0.1367    & 0.4408  &  0.0302 \\
8 	& 35 & 0.4348  &  0.4300  & 0.0740  &  0.1656    & 0.5938  &  0.1302 \\
9	& 50 & 0.4480  &  0.4314  & 0.1124  &  0.2030    & 0.6751  &  0.1746 \\
10  & 71 & 0.5547  &  0.4799  & 0.1272  &  0.1992    & 0.7478  &  0.3374
\end{tabular}
}
\end{center}
\caption{The residual reduction factor $\rho$ for different mesh sizes and different multigrid algorithms, for the two-dimensional Laplace equation; %For completeness, we also report 
the length of the stencil $m$ as per \eqref{eq:def_mAlpha}.\label{Tab:rhoMG}}
\end{table}

\begin{table}[p]\captionsetup[subfloat]{position=top}
\begin{center}
\subfloat[$\rho$ for $\sigma = 2$]{
\begin{tabular}{c|c|c|c}
\multirow{2}{*}{$\ell$} & \multirow{2}{*}{AGMG} & \multirow{2}{*}{BICGSTAB} & BICGSTAB \\ 
& & & with ILU(0) \\
\hline
% TOL 10^-4
%5	&	   0 	&  0.1757  &  0.0130 \\
%10  &   0.2341  &  0.6569  &       0 \\
%15  &   0.4684  &  0.9391  &  0.7009 \\
%20  &   0.5291  &  0.9948  &       0 \\
%21  &   0.6524  &  0.9995  &  0.9498
% TOL 10^-6
%5	&	   0 	&  0.1909   & 0.0092 \\
10  &   0.2486  &  0.6445  &       0 \\
15  &   0.4680  &  0.9479   &  0.7105 \\
20  &   0.5291  &  0.9815  &       0 \\
21  &   0.6524  &  0.9935  &  0.9735
\end{tabular}
}
\subfloat[$\rho$ for $\sigma = \sqrt{5}$]{
\begin{tabular}{c|c|c|c}
\multirow{2}{*}{$\ell$} & \multirow{2}{*}{AGMG} & \multirow{2}{*}{BICGSTAB} & BICGSTAB \\ 
& & & with ILU(0) \\
\hline
% TOL 10^-4
%5	&	       0  &  0.1978  &  0.0153 \\
%10  &       0.3323  &  0.7545  &  0.2826 \\
%15  &       0.5646  &  0.9279  &  0.7867 \\
%20  &       0.6345  &  0.9925  &  0.9392 \\
%21  &       0.4781  &  0.9987  &  0.9504
% TOL 10^-6
%5	&	       0  &  0.1945    &	 0.0158 \\
10  &       0.3298  &  0.7515   & 0.3079 \\
15  &       0.5640  &  0.9312  &  0.7703 \\
20  &       0.6347  &  0.9890   & 0.9550 \\
21  &       0.4780  &  0.9940    & 0.9617
\end{tabular}
}
\end{center}
\caption{Comparison of the residual reduction factor $\rho$ for different system sizes and different solvers for the one dimensional Laplace equation. The system size is $2^\ell +1$.\label{Tab:rhoMG1D}}
\end{table}

\begin{table}[p]\captionsetup[subfloat]{position=top}
\begin{center}
\subfloat[Complexities for $\sigma = 2I_2$]{
\begin{tabular}{c|cc|cc|cc}
\multirow{2}{*}{$\ell$}&\multicolumn{2}{c|}{GMG}&\multicolumn{2}{c|}{AMG}&\multicolumn{2}{c}{AGMG}\\ 
\cline{2-7}
&$c_G$&$c_A$&$c_G$&$c_A$&$c_G$&$c_A$\\\hline
6 & 1.31 & 3.66  & 1.75 & 1.74  & 1.24   & 1.18  \\
7 & 1.32 & 2.69  & 1.76 & 6.92  & 1.26   & 1.26  \\
8 & 1.33 & 3.90  & 1.72 & 2.05  & 1.33   & 1.28  \\
9 & 1.33 & 2.75  & 1.71 & 11.79 & 1.25   & 1.31  \\
10 & 1.33 & 3.97  & 1.70 & 2.16 & 1.25   & 1.23 
\end{tabular}
}
\subfloat[Complexities for $\sigma = \sqrt{5} I_2$]{
\begin{tabular}{c|cc|cc|cc}
\multirow{2}{*}{$\ell$}&\multicolumn{2}{c|}{GMG}&\multicolumn{2}{c|}{AMG}&\multicolumn{2}{c}{AGMG}\\ 
\cline{2-7}
&$c_G$&$c_A$&$c_G$&$c_A$&$c_G$&$c_A$\\\hline
6 & 1.31 & 2.59  & 1.67 & 3.78  & 1.18   & 1.10 \\
7 & 1.32 & 2.69  & 1.74 & 6.48  & 1.24   & 1.22 \\
8 & 1.33 & 2.65  & 1.71 & 7.47  & 1.20   & 1.22 \\
9 & 1.33 & 2.76  & 1.69 & 9.34  & 1.22   & 1.30 \\
10 & 1.33 & 2.67  & 1.64 & 7.39 & 1.32   & 1.45 
\end{tabular}}
\end{center}
\caption{Comparison of the grid and algebraic complexities as per Definitions \ref{gridc} and \ref{algebrac} for different mesh sizes and different multigrid algorithms, for the two-dimensional case.\label{Tab:compMG}}
\end{table}

Returning to the two-dimensional case, the grid hierarchies in the geometric (GMG) and algebraic (AMG) multigrid have 5 levels, including the finest one. In the geometric case, the number of unknowns is $4$ times smaller from one level to the next. In the AGMG case, the hierarchy is at most 4 levels deep. Table \ref{Tab:compMG} reports the complexities for the three categories of algorithms considered. The results confirm the assertion in \cite{wathen2015precond} that AMG coarsening, generally, need not reduce the number of unknowns fast enough. In the present setting, contrasting the case $\sigma = \sqrt{5} I_2$ with $\sigma = 2I_2$ shows that the growth in complexity is due to the interpolation, which creates
a denser connectivity graph on the coarser levels. % by looking at.
% since for even $\ell$ no interpolation is required as the stencil lies on a mesh node.

%For completeness, in Table \ref{Tab:compMGFixSt} we report the residual reduction factor, grid complexity and algebraic complexity for the standard finite difference discretization of the 2D Laplacian. 

%These magnitudes are given for two V-cycles with 1 Gauss-Seidel pre- and post-smoothing steps, the difference between the two cycles being the construction of the coarse grid operator. 

%\subsection{Aggregation-based multigrid}
\subsection{Properties of the LISL matrix}
\label{subsec:agmg}

In this section, we discuss the theoretical foundation of \emph{Aggregation-based Multigrid} (AGMG) for our specific application of wide stencil discretisations.

The key result is Lemma 3.1 in \cite{notay2012aggregation}. 
The non-negativity of the row sums of a LISL discretization matrix is obtained almost by construction. To see this, let $A \in \mathbb{R}^{N\times N}$ be the discretization matrix, where $N \defeq |\Omega_{\Delta x}|$ is the number of mesh points, then the sum for the $i$-th row is
\begin{align*}
\sum_{j=1}^N (A)_{ij} %&= 1 + \Delta t \left( \frac{M}{\Delta x} - c^{\alpha, n}_i \right) - \frac{\Delta t}{2 \Delta x} \sum_{j=1}^N \sum_{p = 1}^{M} w_j(x_i + y^{\alpha, n}_p(x_i)) \\
&= 1 + \Delta t \left( \frac{M}{\Delta x} - c^{\alpha, n}_i \right) - \frac{\Delta t}{\Delta x} M \geq 0,
\end{align*}
where we have used the fact that for any $z \in \bar{\Omega}$, $\sum_{j=1}^N w_j(z) = 1$ and the CFL-type condition $1 - \Delta t c^{\alpha, n}_i \geq 0$, which is satisfied for sufficiently small $\Delta t$ independent of $\Delta x$.

The following analysis of the non-negativity of the column sum makes use of the regularity of the coefficients $b$ and $\sigma$. 
%These assumptions are essential for the analysis in \cite{debrabant2013semi} to hold and they ensure the comparison principle and well-posedness of \eqref{eq:1.1}-\eqref{eq:1.3} in the class of bounded $x$-Lipschitz functions. In particular, for the drift function $b: \mathbb{R}^+ \times \Omega \times \mathcal{A} \to \mathbb{R}^N$ and the diffusion $\sigma: \mathbb{R}^+ \times \Omega \times \mathcal{A} \to \mathbb{R}^{ N \times P}$ for any $x, y \in \mathbb{R}^N$, $a \in \mathcal{A}$ and $s \in [0, T]$ we assume that 
%\begin{align}
%\| \sigma(s, x, a) - \sigma(s, y, a) \|_F &\leq L_\sigma \| x-y \|_2, \\
%\| b(s, x, a) - b(s, y, a) \|_2 &\leq  L_b \| x-y \|_2,
%\end{align}
%where the constants $L_\sigma, L_b \in \mathbb{R}^+$ do not depend on $s$ and $a$, i.e.\ uniform in $t$ and $\alpha$. This is the standard choice of norms in the stochastic control literature, see \cite{pham2009continuous} for instance. 
In particular, we
assume that the coefficients are such that for all $p \in [[1, P]]$, and for any mesh points $x_i, x_l$ and corresponding controls $\alpha_i, \alpha_l \in \mathcal{A}$ and $s \in [0, T]$ we have that
\begin{eqnarray}
\label{sigreg}
\| \sigma^{\alpha_i}_p(s, x_i) - \sigma^{\alpha_l}_p(s, x_l) \|_\infty \leq  L_\sigma \| x_i-x_l \|^\eta_\infty, \;\;\;
\| b^{\alpha_i}(s, x_i) - b^{\alpha_l}(s, x_l) \|_\infty \leq L_b \| x_i-x_l \|^\beta_\infty,
\label{breg}
\end{eqnarray}
where $\beta \in \left(0, 1\right]$, $\eta \in \left( \frac{1}{2}, 1\right]$. 

\begin{remark}
As stated in the introduction, we are working under the standard assumption of Lipschitz continuity of the coefficients in $x$ and continuity in $\alpha$. However,
what we require in (\ref{sigreg}) %and (\ref{breg}) 
is stronger, namely, if the control is inserted in the coefficients as a function of the state $x$, the resulting functions  are H{\"o}lder continuous in $x$. This situation arises in every step of the policy iteration algorithm: a control vector $(\alpha_i)$ is determined by the optimisation step, and then a linear system with this control vector is solved for $(x)_i$. Generally, the optimal control is not a (H{\"o}lder) continuous function of the space variables, but there are many important examples where it is at least piecewise H{\"o}lder. It can be seen from the proof below that Proposition \ref{prop-columns} still holds in this situation.
\end{remark}
\begin{remark}
Lemma 3.1 in \cite{notay2012aggregation}
also assumes that the system matrix is irreducible. LISL discretization matrices need not be irreducible, e.g.\ $L_{SL}^{2K, 2, 1}$ for any $K \in \mathbb{N}$ as in \eqref{eq:LSL}, however, this technical requirement could be overcome by adding an irreducible M-matrix, multiplied by a sufficiently small factor, to the LISL discretization matrix.
\end{remark}

%However, for convenience and relying on the equivalence of norms in the finite dimensional space $\mathbb{R}^N$ we re-write these conditions later under the $\infty$-norm for vectors.

We also assume the use of multi-linear interpolation requiring $2^d$ points to approximate function values in $\mathbb{R}^d$ and the use of Cartesian grids. 

%We prove that the number of non-zero elements on any given column is bounded by a number depending on the Lipschitz constants of the coefficients and the dimension of the spatial domain, but not on the mesh refinement parameter $\Delta x$. This combined with the fact that the absolute value of each of these coefficients is bounded from above by $\frac{\Delta t}{\Delta x}$ ensures the non-negativity of the column sum by controlling the magnitude of the ratio $\frac{\Delta t}{\Delta x}$.

\begin{proposition}
\label{prop-columns}
Let $A$ %$A^\alpha$ 
be the LISL discretization matrix of \eqref{eq:1.1} %--\eqref{eq:1.3} 
for a given time step, on an equispaced Cartesian grid $\Omega_{\Delta x}$ of $\Omega \subset \mathbb{R}^d$ with $\Delta x > 0$, and
a given vector of control values $(\alpha_i)_{i=1,\ldots,N}$, $\alpha_i \in \mathcal{A}$, associated with the mesh points $x_i$, $1\le i \le N:=|\Omega_{\Delta x}|$.
Assume that (\ref{sigreg}) %and (\ref{breg}) 
holds.

%Assume that the coefficients are such that for all $p \in [[1, P]]$, and for any mesh points $x_i, x_l$ and corresponding controls $a_i, a_l \in \mathcal{A}$ and $s \in [0, T]$ we have that
%\begin{align*}
%\| \sigma_p(s, x_i, a_i) - \sigma_p(s, x_l, a_l) \|_\infty \leq  L_\sigma \| x_i-x_l \|^\beta_\infty, \quad 
%\| b(s, x_i, a_i) - b(s, x_l, a_l) \|_\infty \leq L_b \| x_i-x_l \|^\beta_\infty,
%\end{align*}
%where $\beta \in \left( \frac{1}{2}, 1\right]$. 
Then the column sum of the matrix is non-negative provided
\begin{align} \label{eq:columnBoundHolder}
\Delta t \leq \frac{\Delta x}{ \sup_{\alpha\in \mathcal{A}}{|c^{\alpha, +}|} + (\mathcal{M}-1) (P+1) },
\end{align}
where $\mathcal{M}$ depends on the dimension of the domain $d$ and the Lipschitz constants $L_\sigma$ and $L_b$, but not on the mesh parameter $\Delta x$. Indeed, $\mathcal{M}= 3^d$ for sufficiently small $\Delta x$.
%If the coefficient functions $b$ and $\sigma$ are bounded spatial functions, then the column sum is non-negative provided
%
%\begin{align} \label{eq:columnBounded}
%\Delta t \leq \frac{\Delta x^{1+ \frac{N}{2}}}{\Delta x^\frac{N}{2} \sup_{\alpha}{|c^{\alpha, +}|} + \mathcal{M} (P+1) },
%\end{align}
%where $\mathcal{M}$ is a constant depending on the dimension of the domain but not on the size of the mesh.
\end{proposition}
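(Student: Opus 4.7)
The plan is to write the $i$-th column sum of $A$ in closed form, reduce its non-negativity to controlling a ``fan-in'' quantity $S_p^{\pm}(i) \defeq \sum_{j=1}^N w_i(x_j + y^{\alpha_j,\pm}_p(t_n,x_j))$ for each stencil direction $p$, and then exploit the H\"older regularity (\ref{sigreg}) to show that $S_p^{\pm}(i)$ cannot exceed a constant $\mathcal{M}$ depending only on $d$, $L_\sigma$, $L_b$. Using $A_{i,i} = \mathcal{B}^{\alpha_i,n,n}_{i,i}$ and $A_{j,i} = -\mathcal{B}^{\alpha_j,n,n}_{j,i}$ for $j\neq i$, and swapping the order of summation in $\sum_j l^{\alpha_j,n}_{j,i}$, a direct computation yields
\begin{align*}
\sum_{j=1}^N A_{j,i} = 1 - \theta\Delta t_n\, c_i^{\alpha_i} + \frac{\theta\Delta t_n}{\Delta x}\sum_{p=1}^M\left[\,1 - \tfrac{1}{2}\bigl(S_p^{+}(i) + S_p^{-}(i)\bigr)\,\right].
\end{align*}
In the constant-coefficient case, the identity $\sum_{j} w_i(x_j + y) = 1$ (valid for any fixed $y$ by the tensor-product form of multi-linear hat functions on a Cartesian grid) gives $S_p^{\pm}(i)=1$ and the bracket vanishes, recovering exactly the paper's row-sum argument $\sum_j A_{j,i} = 1 - \theta\Delta t_n c_i^{\alpha_i}$.

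The crux is the variable-coefficient bound $S_p^{\pm}(i) \le \mathcal{M}$. Since each $w_i \le 1$, it suffices to bound the cardinality of $\mathcal{J}_p^{\pm}(i) \defeq \{j : w_i(x_j + y^{\alpha_j,\pm}_p) > 0\}$. For any $j,l \in \mathcal{J}_p^{\pm}(i)$, both points must satisfy $\|x_j + y^{\alpha_j,\pm}_p - x_i\|_\infty < \Delta x$, so the triangle inequality together with (\ref{sigreg}) gives, for diffusion directions ($p \le P$),
\begin{align*}
\|x_j - x_l\|_\infty < 2\Delta x + \|y^{\alpha_j,\pm}_p - y^{\alpha_l,\pm}_p\|_\infty \le 2\Delta x + \sqrt{\Delta x}\,L_\sigma\,\|x_j-x_l\|_\infty^{\eta},
\end{align*}
and the analogous inequality with prefactor $\Delta x\, L_b$ and exponent $\beta$ for the drift direction $p = P+1$ in Scheme~2. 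A bootstrap argument then shows that the diameter of $\mathcal{J}_p^{\pm}(i)$ is strictly less than $3\Delta x$ for $\Delta x$ small enough: inserting the trial bound $\|x_j-x_l\|_\infty \le 3\Delta x$ gives $\sqrt{\Delta x}\,L_\sigma(3\Delta x)^\eta = L_\sigma 3^\eta \Delta x^{\eta+1/2} < \Delta x$ for $\Delta x$ sufficiently small, since $\eta > 1/2$ (respectively $\beta > 0$ for drift), which consistently improves the bound. Hence $|\mathcal{J}_p^{\pm}(i)| \le 3^d$ and one may take $\mathcal{M} = 3^d$ in the small-$\Delta x$ regime; in general $\mathcal{M}$ is bounded by a constant depending only on $d$, $L_\sigma$, $L_b$.

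Substituting $S_p^{\pm}(i) \le \mathcal{M}$ in the first display, using $M = P+1$ for Scheme~2, and bounding $c_i^{\alpha_i}$ by $\sup_\alpha |c^{\alpha,+}|$ (the non-positive part of $c$ only helps), non-negativity of the column sum is ensured as soon as
\begin{align*}
\theta\Delta t_n\left[\sup_\alpha|c^{\alpha,+}| + \frac{(\mathcal{M}-1)(P+1)}{\Delta x}\right] \le 1,
\end{align*}
which after using $\theta \le 1$ and rearranging yields (\ref{eq:columnBoundHolder}). The main obstacle is the bootstrap step in the geometric bound: it is the combined effect of the $\sqrt{\Delta x}$ prefactor from the wide-stencil step $\pm\sqrt{\Delta x}\,\sigma^{\alpha}_p$ and the H\"older exponent $\eta > 1/2$ that makes the perturbation $\sqrt{\Delta x}\,L_\sigma R^\eta$ strictly $o(R)$ on the scale $R \sim \Delta x$, thereby confining each fan-in set to a $3\Delta x$ box. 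Plain Lipschitz continuity in $\alpha$, without the stronger composite regularity (\ref{sigreg}), would fail to yield a $\Delta x$-uniform $\mathcal{M}$, which is exactly the reason for the H\"older hypothesis emphasised in the remark preceding the proposition.
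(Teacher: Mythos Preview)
Your proof is correct and follows essentially the same route as the paper: both express the $i$-th column sum in terms of the fan-in quantities $\sum_j w_i(x_j + y^{\alpha_j,\pm}_p)$, bound the number of contributing nodes by a Hölder/triangle-inequality argument showing that any two contributors are at most $2\Delta x$ apart (hence at most $3^d$ of them), and then read off the $\Delta t$-condition. Your exposition is a bit more explicit (you retain $\theta$, include the constant-coefficient sanity check, and spell out why $\eta>1/2$ is exactly what makes the bootstrap close), but the structure and the key estimate are the same as in the paper.
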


\begin{proof}
We carry out the proof for Scheme 2, % of \cite{debrabant2013semi}, 
but an analogous analysis holds for Schemes 1 and 3 in the introduction.
We also note that we can restrict the analysis to steps where no truncation is required, as in the case of truncation the weights only contribute (positively) to the diagonal of the matrix and the right-hand side of the equation (see Remark \ref{rem:rhs}).

For simplicity of notation, we omit the dependence of the coefficient functions $b$ and $\sigma_p$ on the time variable $t$ and the control. For any $i \neq j$ the matrix entry $(A)_{ij} \neq 0$ if and only if for any $1 \leq m \leq P+1$ we require $\phi(x_j)$ to approximate -- by means of linear interpolation -- $\phi(x_i + y^\pm_m(x_i))$, where $y^\pm_m(x_i)$ is either $y^\pm_p(x_i) = \pm \sqrt{\Delta x} \sigma_p(x_i)$ for $1 \leq p \leq P$ or $y^\pm_{P+1}(x_i) = \Delta x b(x_i)$. 
%We denote the above situation as $x_i + y^\pm_m(x_i) \in C_j$ and we say that $x_i + y^\pm_m(x_i)$ contributes to the sum on the $j$-th column. %By properties of interpolation on grid, 
For any two nodes $i$ and $l$ to contribute to the sum of column $j$, it is necessary that
\begin{align*}
\| x_l + y^\pm_m(x_l) - (x_i + y^\pm_m(x_i)) \|_\infty < 2 \Delta x.
\end{align*}
%where $\Delta x > 0$ is the grid refinement parameter. 
As $x_l$ and $x_i$ lie on the grid, there exists a positive constant $M$ such that $\| x_l  - x_i \|_\infty = M \Delta x$. Then $(M+1)^d$ constitutes an upper bound on the number of terms the step $y^\pm_m(\cdot)$ contributes to the sum of column $j$.

We consider the different possible values for $y^\pm_m(x_l)$ separately. First, assume $y^\pm_m(x_l) = \Delta x b(x_l)$ and let $\Delta x \leq 1/(L_b \sqrt{d})$, then
%\begin{align}
%\nonumber
%2 \Delta x &> \| x_l + \Delta x b(x_l) - (x_i + \Delta x b(x_i)) \|_\infty \\ 
%&\geq \| x_l - x_i \|_\infty - \Delta x \| b(x_l)  - b(x_i) \|_\infty  \nonumber  \\
%& \geq M \Delta x - \Delta x L_b \| x_l - x_i \|_\infty^\beta \nonumber  \\
%\label{eq:M_bound_b} &\geq M \Delta x - \Delta x^{1+\beta} L_b M^\beta,
%\end{align}
\begin{align}
2 \Delta x > \| x_l + \Delta x b(x_l) - (x_i + \Delta x b(x_i)) \|_\infty 
%\geq \| x_l - x_i \|_\infty - \Delta x \| b(x_l)  - b(x_i) \|_\infty
%\geq M \Delta x - \Delta x L_b \| x_l - x_i \|_\infty^\beta
\geq M \Delta x - \Delta x^{1+\beta} L_b M^\beta, \label{eq:M_bound_b} 
\end{align}
where we have used the triangle inequality and the H{\"o}lder regularity of $b$. Re-arranging, 
%\eqref{eq:M_bound_b} results in the following upper bound on $M$,
\begin{align*}
M < \frac{2}{1 -  L_b M^{\beta-1} \Delta x^\beta },
\end{align*}
such that
%which ensures that $M$ is bounded for $\Delta x < \frac{1}{\sqrt{N} L_b}	 $ and 
$M \le 2$ for sufficiently small $\Delta x$. 
%We note that a similar bound also holds relaxing the Lipschitz condition to H\"{o}lder continuity with exponent $\beta \in (0, 1]$, resulting in 
%\begin{align*}
%M -  L_b \sqrt{N} (M \Delta x)^\beta < 2 \Longrightarrow M < \frac{2}{1 - L_b \sqrt{N} M^{\beta -1} \Delta x^\beta}.
%\end{align*}
Proceeding similarly for $y^\pm_m(x) = \pm \sqrt{\Delta x} \sigma_m(x)$, we obtain
%
%\begin{align}
%2 \Delta x &> \| x_l \pm \sqrt{\Delta x} \sigma_m(x_l) - (x_i \pm \sqrt{\Delta x} \sigma_m(x_i)) \|_\infty  \nonumber \\
%	& \geq \| x_l - x_i \|_\infty - \sqrt{\Delta x} \| \sigma_m (x_l)  - \sigma_m(x_i) \|_\infty  \nonumber  \\
%	& \geq M \Delta x - \sqrt{\Delta x} L_\sigma  N^{\frac{\beta + 1}{2}} \| x_l - x_i \|^\beta_\infty \nonumber  \\
%\label{eq:M_bound_sigma} &\geq M \Delta x - M^\beta L_\sigma  N^{\frac{\beta + 1}{2}} \Delta x^{\beta + \frac{1}{2}}.
%\end{align}
%
%Assuming that $\Delta x$ is close enough to 0 to make the right hand side of \eqref{eq:M_bound_sigma} positive, rearranging \eqref{eq:M_bound_sigma} we obtain that
%\begin{align*}
%%M - M^\beta L_\sigma  n^{\frac{\beta + 1}{2}} \Delta x^{\beta - \frac{1}{2}} < 2 \Longrightarrow 
%M < \frac{2}{1 - L_\sigma M^{\beta -1} \Delta x^{\beta - \frac{1}{2}}},
%\end{align*}
%where we require that $\beta \in \left(\frac{1}{2}, 1\right]$ %for the right-hand side to remain bounded 
%and 
again $M\le 2$ as $\Delta x \to 0$.

Denote $\mathcal{M}$ to be the maximum of all of the $(M+1)^d$s above, i.e.\ for different mesh points, then the column sum gives
\begin{align} \label{eq:bound}
\sum_{i=1}^N (A)_{ij} &= 1 + \Delta t \left( \frac{P+1}{\Delta x} - c^{\alpha, n}_j \right) - \frac{\Delta t}{2 \Delta x} \sum_{i=1}^N \sum_{p = 1}^{P+1} w_j(x_i + y_p(x_i)) \nonumber \\
&\geq 1 - \frac{\Delta t}{\Delta x}  \left(  c^{\alpha, n}_j + (\mathcal{M}-1) (P+1) \right).
\end{align}
%where the sum runs over all nodes $i$ which require the $j$-th node in the interpolation.
Therefore, non-negativity of the sum is guaranteed by condition (\ref{eq:columnBoundHolder}).
%\begin{align*}
%\Delta t \leq \frac{\Delta x}{ \sup_{\alpha}{|c^{\alpha, +}|} + (\mathcal{M}-1) (P+1) }.
%\end{align*}

\end{proof}

\begin{remark}
For the LISL scheme to be first order accurate, it is required that $\Delta t \sim \mathcal{O}(\Delta x)$. Therefore, the bound \eqref{eq:columnBoundHolder} does not impose problematic restrictions on the size of the time steps.
%\end{remark}
%\begin{remark}
%A similar bound can be obtained for the scheme with truncated stencil. As in the positivity condition \eqref{eq:CFL_trunc}, we need to account for the fact that the finite difference weights are $\mathcal{O}(\Delta x^{-3/4})$, therefore to ensure positive column sum we require
%\begin{align}
%\Delta t \leq K \Delta x^{7/4},
%\end{align}
%for some mesh size independent positive constant $K$.
%\end{remark}
%\begin{remark}
We recall that 
$\Delta t = \mathcal{O}( \Delta x)$ and
$\Delta t = \mathcal{O}( \Delta x^{3/2})$ (or even $\Delta t = \mathcal{O}( \Delta x^{2})$) are the CFL conditions for the explicit schemes without and with truncation, respectively, see Corollary \ref{cor:CFL}.
Therefore, on bounded domains, fully implicit time stepping with policy iteration and
AGMG preconditioning is the computationally most efficient overall algorithm among the ones considered.
% is only guaranteed to converge in those cases.  AGMG converges satisfactorily in the tests in Section \ref{subsec:perform} even if this condition is violated.
\end{remark}

\subsection{Performance of the algebraic approaches}
\label{subsec:perform}

We compare the performance of the classical AMG implementation in the HSL library \cite{HSL}, and AGMG from \cite{notayweb} for the benchmark optimal control problems in Section \ref{pr:ProblemB}. Both of these methods are used as preconditioners for a Krylov subspace method that is assumed to have converged when the relative residual is below $10^{-6}$. 
In particular, we use MATLAB's implementation of GMRES \cite{SaadGMRES} for AMG and GCR \cite{GCR} for AGMG.
The AMG preconditioner consists of one iteration of the standard V-cycle with two Gauss-Seidel pre- and post-smoothing steps, whereas AGMG uses one Gauss-Seidel pre- and post-smoothing step and the enhanced multigrid cycles mentioned in the introduction, see \cite{notay2010aggregation, notay2012aggregation}.
For completeness, we also include as benchmark MATLAB's sparse direct solver using UMFPACK \cite{DavisUMFPACK}. The problems considered have smooth closed form solutions linear in $t$. As mentioned in the previous section, we employ policy iteration to solve the resulting non-linear discrete problem.
%; therefore the quantities reported are averaged over all policy iterations.
The tests were run on a Linux machine under MATLAB 2015a, on a quad-core AMD 4.2GHz with 7.5GB of RAM and 15GB of swap.

%\begin{figure}[htp]
\begin{figure}[h]
	\centering
	\subfloat[%Total seconds solving linear systems versus number of unknowns for each of the linear system solvers considered.
	%Total run time for Problem A.
	Total time on solver for Problem A.]
	{\includegraphics[width=0.49\textwidth]{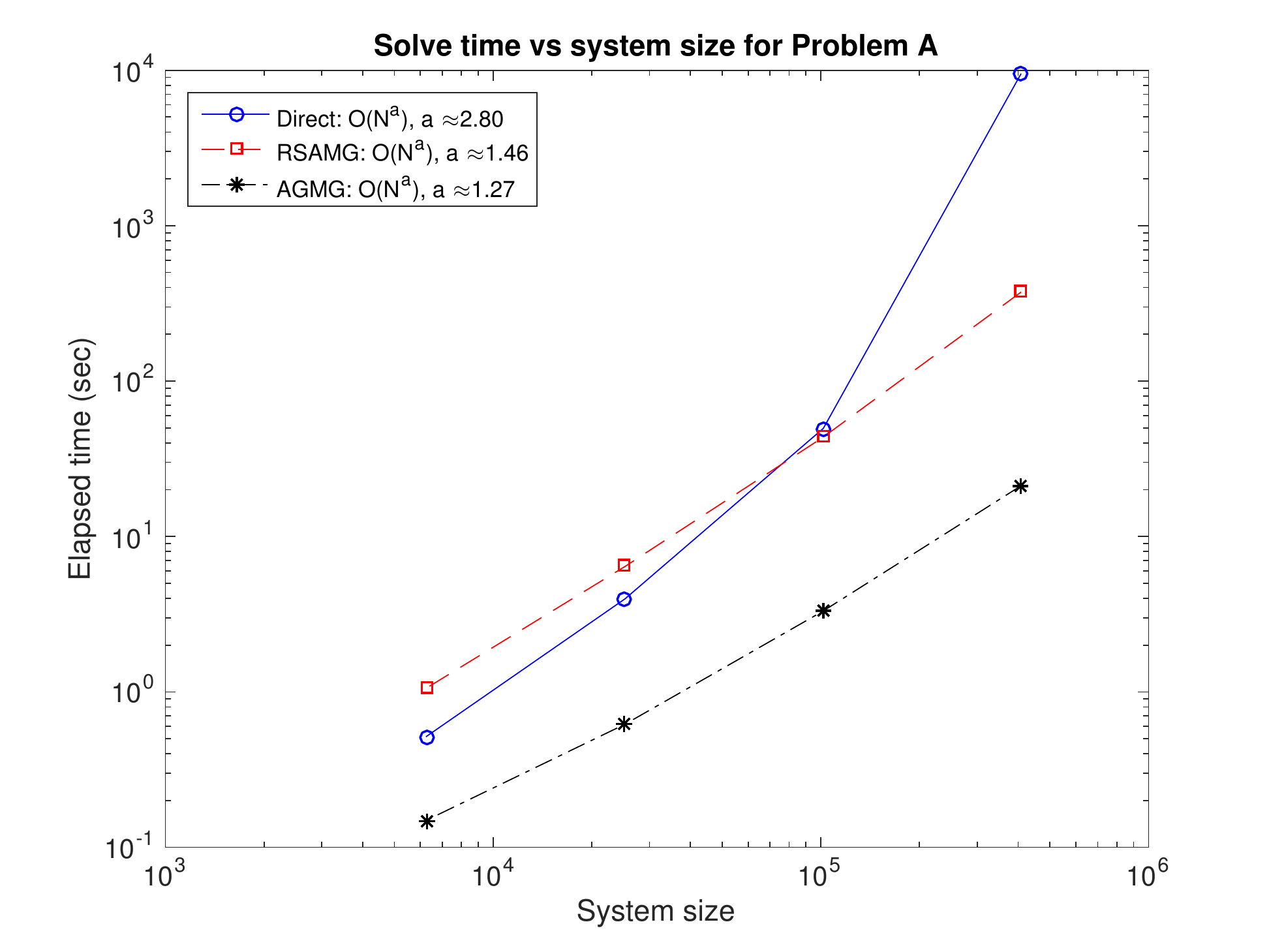}}
		\hspace{\stretch{1}}
		\subfloat[%Total seconds solving linear systems versus number of unknowns for each of the linear system solvers considered.
		%Total run time for Problem B.
	Total time on solver for Problem B.]
	{\includegraphics[width=0.49\textwidth]{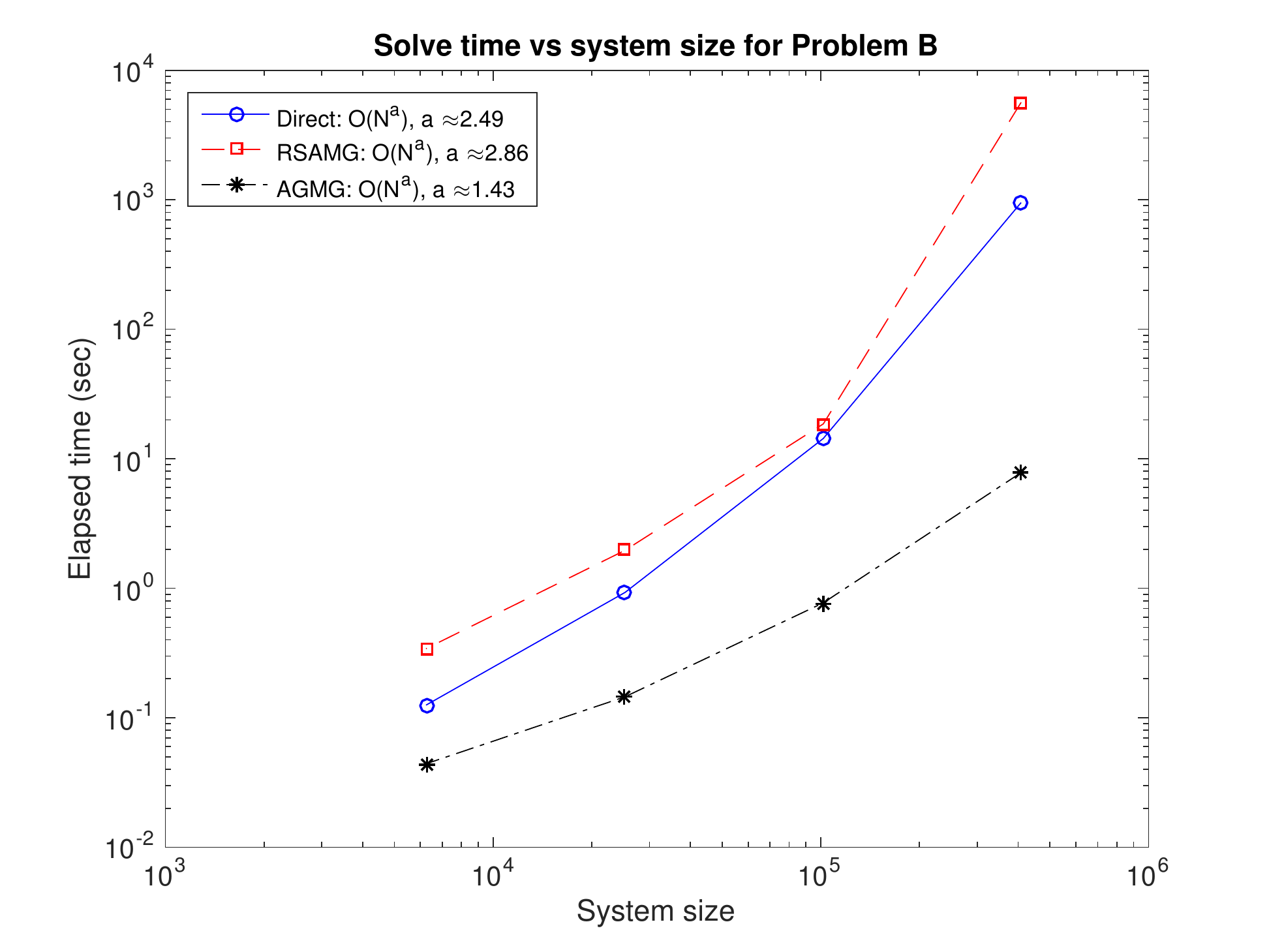}}
	\caption{Total number of seconds for solving the linear systems versus the size of the systems  for each of the linear system solvers considered. We use equispaced Cartesian grids in space with $81, 161, 321$ and $641$ nodes per dimension and one time step. }
	\label{fig:93B_errors_timing}
\end{figure}
In Figure \ref{fig:93B_errors_timing} we present the elapsed time solving linear systems for a single time step ($\Delta t = T$). Both MG methods provide a solution with the same accuracy as the sparse direct solver but with improved scalability. AGMG outperforms AMG and the sparse direct solver in both problems. Figure \ref{fig:Timing_linear} shows the average time spent solving linear systems per time step when $\Delta t = \Delta x$. Reducing $\Delta t$ makes the system matrix more diagonally dominant and as a consequence easier to precondition. 
This effect is noticeable for Problem B using AMG as preconditioner, see Table \ref{Tab:Time}.

%\begin{figure}[htp]
\begin{figure}[ht]
	\centering
	\subfloat[%Average run time for Problem A.
	Average time on solver for Problem A.]
	{\includegraphics[width=0.49\textwidth]{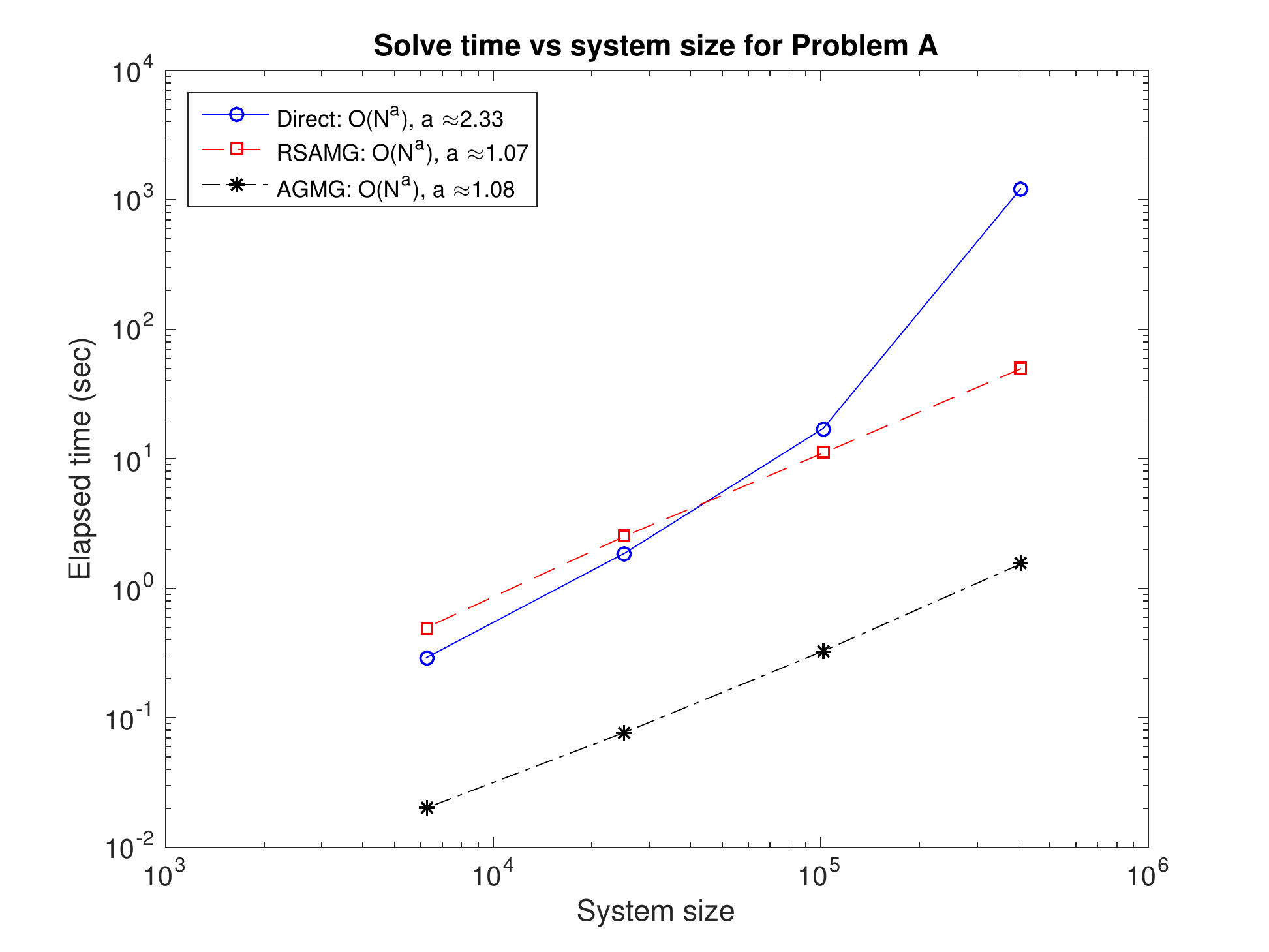}}
		\hspace{\stretch{1}}
		\subfloat[%Average run time for Problem B.
		Average time on solver for Problem B.]
	{\includegraphics[width=0.49\textwidth]{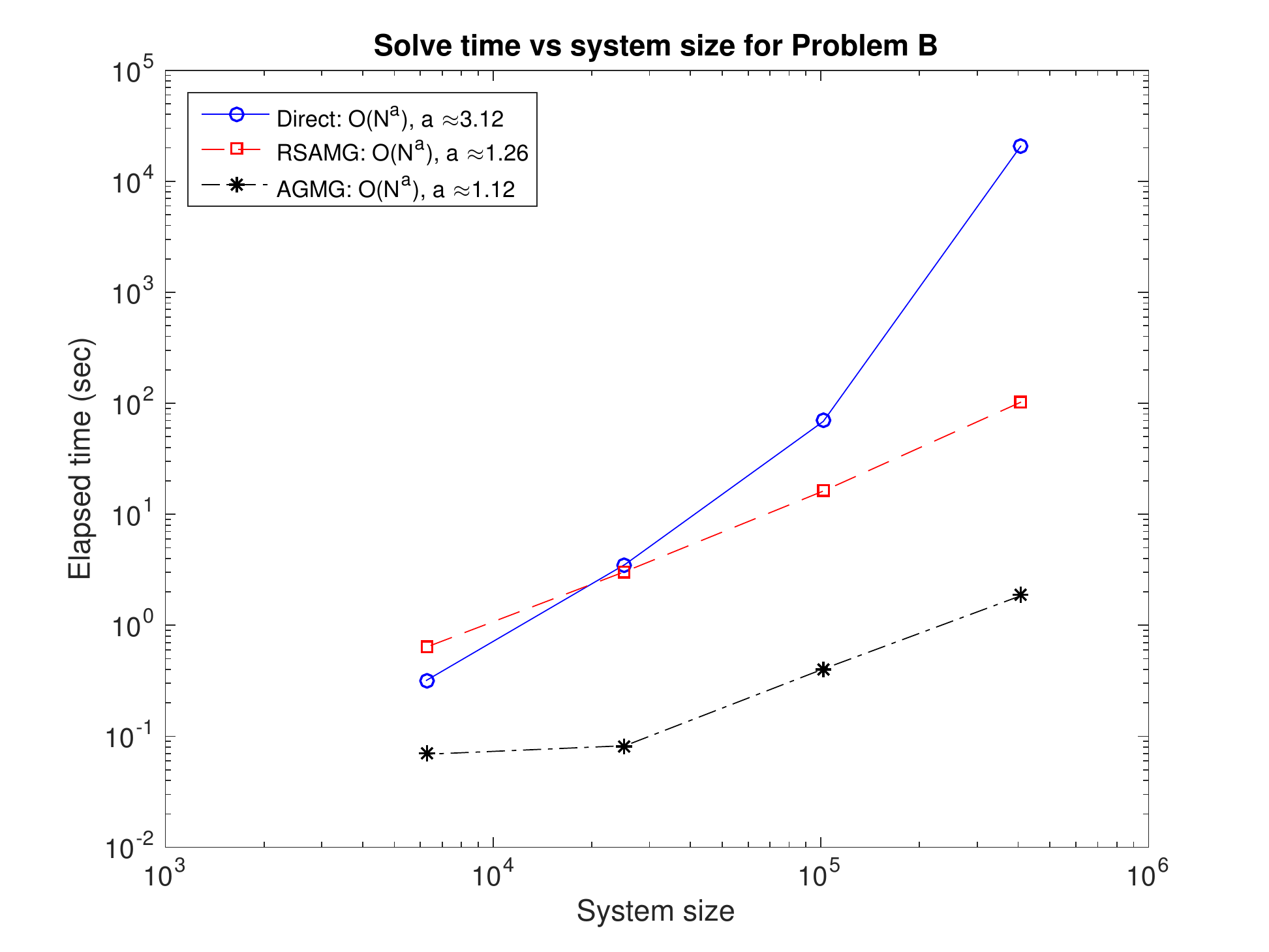}}
	\caption{Average number of seconds per time step for solving the linear systems versus the size of the systems. We use equispaced Cartesian grids in space with $81, 161, 321$ and $641$ nodes per dimension and $\Delta t = \Delta x$. }
	\label{fig:Timing_linear}
\end{figure}

\begin{table}[h]
%\begin{table}[htp]
\captionsetup[subfloat]{position=top}
\begin{center}
\begin{tabular}{c|c|cc|cc|cc}
\multirow{2}{*}{ }& \multirow{2}{*}{$N_x$}&\multicolumn{2}{c|}{Direct}&\multicolumn{2}{c|}{AMG}&\multicolumn{2}{c}{AGMG}\\ 
\cline{3-8}
 & & $\Delta t = T$ & $\Delta t = \Delta x$ & $\Delta t = T$ & $\Delta t = \Delta x$ & $\Delta t = T$ & $\Delta t = \Delta x$ \\
 \hline
\multirow{2}{*}{Problem A} & $321$ & 49.60  & 17.93 & 43.62  & 10.07  & 3.31 & 0.41  \\
& $641$ & 9.50e+03  & 4.33e+03 & 373.77  & 48.82  & 21.22 & 1.84  \\
\hline
\multirow{2}{*}{Problem B} & $321$ & 14.35  & 68.70 & 18.59  & 16.19  & 0.77 & 0.40  \\
& $641$ & 950.62  & 2.09e+04 & 5.64e+03  & 102.57  & 7.82 & 1.85  \\
\end{tabular}
\end{center}
\caption{Average seconds per time step solving linear systems.\label{Tab:Time}}
\end{table}

Table \ref{Tab:Memory} and Table \ref{Tab:Krylov_MG} report memory consumption and quantities related to the Krylov subspace method and to the coarsening. As commented in the previous sections, AMG results in grid and algebraic complexities higher than AGMG's. The coarsening for both of the methods is stopped when the coarse level system is cheap to solve exactly compared to the starting system, specifically, we stop whenever the number of unknowns at the coarse level is comparable to the cubic root of the initial number of unknowns. 
The effect of simplifying the intergrid transfer operators can be observed on the coarse to fine stencil ratio (C/F stencil). For AGMG, the stencil on the coarsest level is similar to the initial one, whereas for AMG it is significantly denser. The fact that aggregation-based coarsening strategies yield coarse matrices with similar sparsity as the original one was noted in \cite{kim2003multigrid}. 
Moreover, AGMG yields shallower hierarchies due to higher coarsening factors.
The effect of reducing $\Delta t$ is also appreciated in this ratio.
We observe that the direct method's consumption increases dramatically while for the MG methods, we note the relation between the memory requirement and the algebraic complexity of the method.

The number of Krylov iterations highlight previous comments on the fact that aggregation-based multigrid methods are not efficient if used as stand-alone solvers: in all test cases, AGMG used more iterations than AMG per policy iteration.
However, AGMG used as a preconditioner to a Krylov subspace method provides accurate solutions faster and cheaper than the other two solvers considered.

%\begin{table}[htp]
\begin{table}[h]
\captionsetup[subfloat]{position=top}
\begin{center}
% Source
% top_stats_single.xlsx
\subfloat[Peak memory consumption measured in GB for $\Delta t = T$]{
\begin{tabular}{c|c|cc|cc|cc}
\multirow{2}{*}{ }& \multirow{2}{*}{$N_x$}&\multicolumn{2}{c|}{Direct}&\multicolumn{2}{c|}{AMG}&\multicolumn{2}{c}{AGMG}\\ 
\cline{3-8}
 & & VIRT & RES & VIRT & RES & VIRT & RES\\
 \hline
\multirow{2}{*}{Problem A} & $321$ & 11.40  & 5.29 & 10.63  & 5.10  & 9.83 & 4.14  \\
& $641$ & 25.99  & 7.13 & 14.40  & 7.05  & 12.66 & 6.76  \\
\hline
\multirow{2}{*}{Problem B} & $321$ & 11.71  & 6.50 & 9.84  & 5.14  & 9.84 & 5.15  \\
& $641$ & 23.83  & 7.10 & 18.51  & 7.13  & 9.90 & 5.12  \\
\end{tabular}
}
\\
% Source Example93B_solver_benchmark_201603251030.txt
% top_stats_linear_B_mouse.xlsx
\subfloat[Peak memory consumption measured in GB for $\Delta t = \Delta x$]{
%\begin{tabular}{c|c|cc|cc}
%\multirow{2}{*}{ }& \multirow{2}{*}{$N_x$}&\multicolumn{2}{c|}{AMG}&\multicolumn{2}{c}{AGMG}\\ 
%\cline{3-6}
% & & VIRT & RES & VIRT & RES\\
% \hline
%\multirow{2}{*}{Problem A} & $321$ & 6.41  & 2.38 & 6.48  & 2.39  \\
%& $641$ & 10.61  & 6.68 & 10.52  & 6.64  \\
%\hline
%\multirow{2}{*}{Problem B} & $321$ & 9.62  & 4.66 & 9.62  & 4.66 \\
%& $641$ & 14.64  & 7.13  & 10.05 & 5.34  \\
%\end{tabular}
\begin{tabular}{c|c|cc|cc|cc}
\multirow{2}{*}{ }& \multirow{2}{*}{$N_x$}&\multicolumn{2}{c|}{Direct}&\multicolumn{2}{c|}{AMG}&\multicolumn{2}{c}{AGMG}\\ 
\cline{3-8}
 & & VIRT & RES & VIRT & RES & VIRT & RES\\
 \hline
\multirow{2}{*}{Problem A} & $321$ & 8.39  & 3.20 & 6.41  & 2.38 & 6.48  & 2.39  \\
& $641$ & 21.80  & 7.22 & 10.61  & 6.68 & 10.52  & 6.64  \\
\hline
\multirow{2}{*}{Problem B} & $321$ & 13.76  & 7.09 & 9.83  & 3.67 & 9.83  & 3.71 \\
& $641$ & 26.11  & 7.26 & 12.72  & 7.23  & 9.90 & 5.17  \\
\end{tabular}
}
\end{center}
\caption{Peak memory consumption statistics in gigabytes (GB) of the MATLAB process sampled using the shell command {\tt top}. VIRT is the total amount of virtual memory used by MATLAB, whereas RES is the non-swapped physical memory (limited to 7.5).\label{Tab:Memory}}
\end{table}

\begin{table}[h]\captionsetup[subfloat]{position=top}
%\begin{table}[htp]\captionsetup[subfloat]{position=top}
\begin{center}
\subfloat[Krylov iterations and coarsening related quantities for $\Delta t = T$]{
\begin{tabular}{c|c|c|c|c|c|c|c}
 & Solver & $N_x$ & Avg Krylov It & \# levels & C/F stencil & $c_G$ & $c_A$\\
 \hline
\multirow{4}{*}{Problem A} 
& \multirow{2}{*}{AMG} & $321$ & 4.00  & 9.0 & 26.25 & 2.61 & $7.06^*$  \\
& & $641$ & 4.29  & 11.0 & 22.33  &  2.42 & $4.63^*$  \\
\cline{2-8}
& \multirow{2}{*}{AGMG} & $321$ & 12.67  & 5.83 & 1.30 & 1.81 &  1.97 \\
& & $641$ & 17.14  & 6.14 & 1.36   & 1.61 & 1.77  \\
 \hline \hline
 \multirow{4}{*}{Problem B} 
& \multirow{2}{*}{AMG} & $321$ & 5.00  & 7.0 & 86.26  & 2.08 & $9.60^*$  \\
& & $641$ & 6.67  & 9.5 & 221.43  & 2.23 & $11.61^*$  \\
\cline{2-8}
& \multirow{2}{*}{AGMG} & $321$ & 12.00  & 5.00 & 0.50  & 1.60 &  1.92 \\
& & $641$ & 15.00  & 5.50 & 0.39  & 1.53 & 1.57  \\
\end{tabular}
}
\\
\subfloat[Krylov iterations and coarsening related quantities for $\Delta t = \Delta x$]{
\begin{tabular}{c|c|c|c|c|c|c|c}
 & Solver & $N_x$ & Avg Krylov It & \# levels & C/F stencil & $c_G$ & $c_A$\\
 \hline
\multirow{4}{*}{Problem A} 
& \multirow{2}{*}{AMG} & $321$ & 3.00  & 9.98 & 16.30 & 2.76 & $5.90^*$  \\
& & $641$ & 3.00  & 12.0 & 16.60 &  2.75 & $5.12^*$  \\
\cline{2-8}
& \multirow{2}{*}{AGMG} & $321$ & 6.00  & 2.00 & 0.23 & 1.00 & 1.00 \\
& & $641$ & 6.00 & 2.00 & 0.26 & 1.00 & 1.00  \\
 \hline \hline
 \multirow{4}{*}{Problem B} 
& \multirow{2}{*}{AMG} & $321$ & 2.98  & 8.61 & 48.65  & 2.47 & $8.61^*$  \\
& & $641$ & 2.99  & 11.27 & 107.77  & 2.70 & $11.12^*$  \\
\cline{2-8}
& \multirow{2}{*}{AGMG} & $321$ & 4.99  & 2.96 & 0.31  & 1.07 &  1.02 \\
& & $641$ & 5.01  & 2.97 & 0.33  & 1.15 & 1.10  \\
\end{tabular}
}
\end{center}
\caption{Quantities related the to the Krylov subspace iteration and multigrid coarsening. \textit{Avg Krylov It} contains the average number of Krylov iterations over all time steps and all policy iterations;
%AMG requiring only one iteration indicates that it can be use as a stand-alone solver instead of a preconditioner. 
\textit{\# levels} contains the average depth in the grid hierarchy; \textit{C/F stencil} contains the ratio between the stencil at the coarsest level and that on the finest level (lower is better). On the finest level, the stencil is close to 11 for Problem A and close to 8 for Problem B. The last two columns report the grid and algebraic complexity as per Definitions \ref{gridc} and \ref{algebrac}.
As the full matrix hierarchy was not available from \cite{HSL} for AMG, but only the coarsest and finest matrices, the starred algebraic complexities are estimates based on the assumption of a geometrically decreasing complexity between the coarsest and finest level, which is likely to be a significant underestimate.
%These two quantities are closely related to the number of levels and the coarse to fine stencil ratio. 
\label{Tab:Krylov_MG}}
\end{table}

%\begin{figure}[htp]
%	\centering
%	\subfloat[Memory consumption statistics and complexity benchmarks.]
%	{\label{subfig:memory_93B}\includegraphics[width=0.9\textwidth]{memory_stats_mac_20150902.pdf}}
%	\\
%	\vspace{2em}
%	\subfloat[Krylov iteration and coarsening related statistics.]
%	{\label{subfig:krylov_93B}\includegraphics[width=0.9\textwidth]{krylov_stats_mac_20150902.pdf}}
%	\caption{Memory consumption statistics of the MATLAB process sampled using the shell command {\tt top}. }
%	\label{fig:93B_krylov_memory}
%\end{figure}

%\FloatBarrier

\section{Conclusions}
\label{sec:End}
This article discusses two aspects of practical importance associated with wide stencil discretizations of second order non-linear parabolic differential operators. First, we study the truncation of the stencil for problems on bounded domains, as a result of the method overstepping the boundaries for nodes in a surrounding layer. Our main result details the construction of such truncation and proves that the resulting scheme remains consistent, monotone and conditionally stable. Numerical examples confirm that the truncation improves the accuracy of the approach compared to constant and linear extrapolation of the boundary conditions, and the modification of the CFL condition of the scheme.

Second, motivated by the stringent CFL condition of explicit time stepping schemes, we consider implicit schemes and the application of multigrid methods to solve the resulting discrete non-linear system of equations efficiently. Using theoretical and empirical arguments, we show the need to employ multigrid methods based on algebraic ideas. We show that aggregation-based methods are well suited for the discretization schemes considered and justify their use by proving that under mild conditions on the mesh refinement parameters the LISL discretization matrices are M-matrices with non-negative row and column sums. The algorithms are shown to compare favourably against AMG and sparse direct solvers.

Although we only considered linear interpolation, much of the analysis, including in particular the matrix properties in Section \ref{subsec:agmg}, will also hold if other limited interpolations (see, e.g., \cite{debrabant2013semi}, \cite{warin}), are used, as only the properties in (\ref{eq:monotone_int_2}) are critical.

To conclude, we emphasise that monotone schemes for general diffusions in two and more dimensions are necessarily non-local, so that the question of boundary truncation is not restricted to the class of schemes studied in this paper.

\clearpage
%\newpage
\renewcommand{\bibname}{References}
\bibliographystyle{abbrv}  %use the plain bibliography style
%\bibliography{/Users/julen/Dropbox/DPhil/MATLAB/SL-schemes/doc/refs}{}        %use a bibtex bibliography file refs.bib
\bibliography{SL_MG_truncation}

\begin{thebibliography}{10}

\bibitem{akian1995finite}
M.~Akian, P.~S{\'e}quier, and A.~Sulem.
\newblock A finite horizon multidimensional portfolio selection problem with
  singular transactions.
\newblock In {\em Decision and Control, 1995., Proceedings of the 34th IEEE
  Conference on}, volume~3, pages 2193--2198. IEEE, 1995.

\bibitem{barles1991convergence}
G.~Barles and P.~E. Souganidis.
\newblock Convergence of approximation schemes for fully nonlinear second order
  equations.
\newblock {\em Asymptotic Analysis}, 4(3):271--283, 1991.

\bibitem{BlobetaHoppe89}
M.~Blo{\ss} and R.~H.~W. Hoppe.
\newblock Numerical computation of the value function of optimally controlled
  stochastic switching processes by multi-grid techniques.
\newblock {\em Numerical Functional Analysis and Optimization},
  10(3-4):275--304, 1989.

\bibitem{bokanowski_Howard}
O.~Bokanowski, S.~Maroso, and H.~Zidani.
\newblock {Some convergence results for Howard's algorithm}.
\newblock {\em SIAM Journal on Numerical Analysis}, 47(4):3001--3026, 2009.

\bibitem{camilli1995approximation}
F.~Camilli and M.~Falcone.
\newblock An approximation scheme for the optimal control of diffusion
  processes.
\newblock {\em ESAIM: Mathematical Modelling and Numerical Analysis -
  Mod{\'e}lisation Math{\'e}matique et Analyse Numérique}, 29(1):97--122,
  1995.

\bibitem{crandallLionsConvergentSchemes}
G.~M. Crandall and P.-L. Lions.
\newblock Convergent difference schemes for nonlinear parabolic equations and
  mean curvature motion.
\newblock {\em Numerische Mathematik}, 75(1):17--41, 1996.

\bibitem{crandall1992user}
M.~G. Crandall, H.~Ishii, and P.-L. Lions.
\newblock User’s guide to viscosity solutions of second order partial
  differential equations.
\newblock {\em Bulletin of the American Mathematical Society}, 27(1):1--67,
  1992.

\bibitem{DavisUMFPACK}
T.~A. Davis.
\newblock {Algorithm 832: UMFPACK V4.3---an Unsymmetric-pattern Multifrontal
  Method}.
\newblock {\em ACM Transactions on Mathematical Software}, 30(2):196--199, June
  2004.

\bibitem{debrabant2013semi}
K.~Debrabant and E.~R. Jakobsen.
\newblock Semi-{L}agrangian schemes for linear and fully non-linear diffusion
  equations.
\newblock {\em Mathematics of Computation}, 82(283):1433--1462, 2013.

\bibitem{GCR}
S.~C. Eisenstat, H.~C. Elman, and M.~H. Schultz.
\newblock Variational iterative methods for nonsymmetric systems of linear
  equations.
\newblock {\em SIAM Journal on Numerical Analysis}, 20(2):345--357, 1983.

\bibitem{forsyth2007numerical}
P.~A. Forsyth and G.~Labahn.
\newblock Numerical methods for controlled {H}amilton-{J}acobi-{B}ellman {PDE}s
  in finance.
\newblock {\em Journal of Computational Finance}, 11(2):1, 2007.

\bibitem{forsythVetzal2012}
P.~A. Forsyth and K.~R. Vetzal.
\newblock Numerical methods for nonlinear {PDE}s in finance.
\newblock In J.-C. Duan, W.~K. Härdle, and J.~E. Gentle, editors, {\em
  Handbook of Computational Finance}, Springer Handbooks of Computational
  Statistics, pages 503--528. Springer Berlin Heidelberg, 2012.

\bibitem{hanWanMGHJBI}
D.~Han and J.~W.~L. Wan.
\newblock Multigrid methods for second order {H}amilton--{J}acobi--{B}ellman
  and {H}amilton--{J}acobi--{B}ellman--{I}saacs equations.
\newblock {\em SIAM Journal on Scientific Computing}, 35(5):S323--S344, 2013.

\bibitem{hoppeMGHJB}
R.~Hoppe.
\newblock {Multi-grid methods for Hamilton-Jacobi-Bellman equations}.
\newblock {\em Numerische Mathematik}, 49(2-3):239--254, 1986.

\bibitem{HSL}
HSL.
\newblock {A} collection of {F}ortran codes for large scale scientific
  computation, \url{http://www.hsl.rl.ac.uk/}, 2015.

\bibitem{kim2003multigrid}
H.~Kim, J.~Xu, and L.~Zikatanov.
\newblock A multigrid method based on graph matching for convection--diffusion
  equations.
\newblock {\em Numerical Linear Algebra with Applications}, 10(1-2):181--195,
  2003.

\bibitem{kushner2001numerical}
H.~J. Kushner and P.~Dupuis.
\newblock {\em {Numerical Methods for Stochastic Control Problems in Continuous
  Time}}, volume~24.
\newblock Springer Science \& Business Media, 2001.

\bibitem{LionsControl2}
P.-L. Lions.
\newblock Optimal control of diffusion processes and
  {H}amilton--{J}acobi--{B}ellman equations part 2 : viscosity solutions and
  uniqueness.
\newblock {\em Communications in Partial Differential Equations},
  8(11):1229--1276, 1983.

\bibitem{ma2014unconditionally}
K.~Ma and P.~A. Forsyth.
\newblock An unconditionally monotone numerical scheme for the two factor
  uncertain volatility model.
\newblock {\em IMA Journal of Numerical Analysis}, 2016.
\newblock To appear.

\bibitem{menaldi1989some}
J.-L. Menaldi.
\newblock Some estimates for finite difference approximations.
\newblock {\em SIAM Journal on Control and Optimization}, 27(3):579--607, 1989.

\bibitem{MotzkinWasow}
T.~S. Motzkin and W.~Wasow.
\newblock On the approximation of linear elliptic differential equations by
  difference equations with positive coefficients.
\newblock {\em Journal of Mathematics and Physics}, 31(1):253--259, 1952.

\bibitem{napov2012AGMGguaranteed}
A.~Napov and Y.~Notay.
\newblock An algebraic multigrid method with guaranteed convergence rate.
\newblock {\em SIAM Journal on Scientific Computing}, 34(2):A1079--A1109, 2012.

\bibitem{notayweb}
Y.~Notay.
\newblock Homepage for {AGMG}, \url{http://homepages.ulb.ac.be/~ynotay/AGMG/}.

\bibitem{notay2010aggregation}
Y.~Notay.
\newblock An aggregation-based algebraic multigrid method.
\newblock {\em Electronic Transactions on Numerical Analysis}, 37(6):123--146,
  2010.

\bibitem{notay2012aggregation}
Y.~Notay.
\newblock Aggregation-based algebraic multigrid for convection-diffusion
  equations.
\newblock {\em SIAM Journal on Scientific Computing}, 34(4):A2288--A2316, 2012.

\bibitem{notay2008krylov}
Y.~Notay and P.~S. Vassilevski.
\newblock {Recursive Krylov-based multigrid cycles}.
\newblock {\em Numerical Linear Algebra with Applications}, 15(5):473--487,
  2008.

\bibitem{oberman2006convergent}
A.~M. Oberman.
\newblock Convergent difference schemes for degenerate elliptic and parabolic
  equations: {H}amilton--{J}acobi equations and free boundary problems.
\newblock {\em SIAM Journal on Numerical Analysis}, 44(2):879--895, 2006.

\bibitem{ruge1987algebraic}
J.~Ruge and K.~St{\"u}ben.
\newblock Algebraic multigrid.
\newblock {\em Multigrid methods}, 3:73--130, 1987.

\bibitem{SaadIterative}
Y.~Saad.
\newblock {\em Iterative Methods for Sparse Linear Systems}.
\newblock Society for Industrial and Applied Mathematics, second edition, 2003.

\bibitem{SaadGMRES}
Y.~Saad and M.~H. Schultz.
\newblock {GMRES}: A generalized minimal residual algorithm for solving
  nonsymmetric linear systems.
\newblock {\em SIAM Journal on Scientific and Statistical Computing},
  7(3):856--869, 1986.

\bibitem{stubenAMGreview}
K.~Stüben.
\newblock A review of algebraic multigrid.
\newblock {\em Journal of Computational and Applied Mathematics},
  128(1–2):281 -- 309, 2001.
\newblock Numerical Analysis 2000. Vol. VII: Partial Differential Equations.

\bibitem{trottenberg2001multigrid}
U.~Trottenberg, C.~Oosterlee, and A.~Sch{\"u}ller.
\newblock {\em Multigrid}.
\newblock Academic Press, 2001.

\bibitem{BICGSTAB}
H.~A. van~der Vorst.
\newblock {Bi-CGSTAB}: A fast and smoothly converging variant of {Bi-CG} for
  the solution of nonsymmetric linear systems.
\newblock {\em SIAM Journal on Scientific and Statistical Computing},
  13(2):631--644, 1992.

\bibitem{vassilevski2008multilevel}
P.~S. Vassilevski.
\newblock {\em {Multilevel block factorization preconditioners: Matrix-based
  analysis and algorithms for solving finite element equations}}.
\newblock Springer Science \& Business Media, 2008.

\bibitem{warin}
X.~Warin.
\newblock Some non-monotone schemes for time dependent
  {H}amilton--{J}acobi--{B}ellman equations in stochastic control.
\newblock {\em Journal of Scientific Computing}, 66:1122--1147, 2013.

\bibitem{wathen2015precond}
A.~J. Wathen.
\newblock Preconditioning.
\newblock {\em Acta Numerica}, 24:329--376, 5 2015.

\end{thebibliography}

\appendix

\section{Further tables}

% pwd -> '/Users/julen/Dropbox/DPhil/MATLAB/SL-schemes/MG_examples'
% data_B = 'data/Example93B_truncation_benchmark_201603201457.mat';
% pwd -> /Users/julen/Dropbox/DPhil/MATLAB/SL-schemes/test_big_gruesome/data
% load('Example93B_truncation_benchmark_201609011557.mat')
\begin{table}[htp]\captionsetup[subfloat]{position=top}
\begin{center}
% print_table(e_Linf_matrix, delta_x, Nx1, 1);
\newsubtab{
%11 & 2.58e-01 & 0.00 & 1.88e-01 & 0.00 & 2.03e-01 & 0.00 & 2.03e-01 & 0.00\\
%21 & 2.09e-01 & 0.30 & 7.80e-02 & 1.27 & 8.05e-02 & 1.34 & 7.85e-02 & 1.37\\
%41 & 1.73e-01 & 0.27 & 3.91e-02 & 0.99 & 3.91e-02 & 1.04 & 3.88e-02 & 1.02\\
41 & 1.73e-01 & - & 3.91e-02 & - &           3.95e-02 & -  & 3.88e-02 & -\\
81 & 1.39e-01 & 0.32 & 1.84e-02 & 1.09 &     1.84e-02 & 1.10  & 1.83e-02 & 1.09\\
161 & 1.07e-01 & 0.38 & 8.71e-03 & 1.08 &     8.70e-03 & 1.08 & 8.68e-03 & 1.07\\
321 & 8.05e-02 & 0.41 & 1.39e+43 & -150.16 &  4.12e-03 & 1.08 & 4.11e-03 & 1.08\\
641 & 5.95e-02 & 0.44 & 1.77e+153 & -365.76 & 2.17e-03 & 0.92 & 2.17e-03 & 0.92\\
}
{Error in $L^\infty$-norm over $\Omega_{\Delta x}$}{}
% print_table(e_Linf_interior_matrix, delta_x, Nx1, 1);
\\\newsubtab{
%11 & 2.58e-01 & 0.00 & 1.88e-01 & 0.00 & 2.03e-01 & 0.00 & 2.03e-01 & 0.00\\
%21 & 1.14e-01 & 1.18 & 7.80e-02 & 1.27 & 8.05e-02 & 1.34 & 7.85e-02 & 1.37\\
%41 & 5.71e-02 & 1.00 & 3.91e-02 & 0.99 & 3.91e-02 & 1.04 & 3.88e-02 & 1.02\\
41 & 5.71e-02 & - & 3.91e-02 & - & 			 3.95e-02 & - & 3.88e-02 & -\\
81 & 2.74e-02 & 1.06 & 1.84e-02 & 1.09 & 	 1.84e-02 & 1.10 & 1.83e-02 & 1.09\\
161 & 1.31e-02 & 1.06 & 8.71e-03 & 1.08 & 	  8.70e-03 & 1.08& 8.68e-03 & 1.07\\
321 & 6.57e-03 & 0.99 & 8.34e+28 & -102.92 &  4.12e-03 & 1.08& 4.11e-03 & 1.08\\
641 & 3.28e-03 & 1.00 & 1.09e+127 & -325.93 & 2.17e-03 & 0.92 & 2.17e-03 & 0.92\\
}
{Error in $L^\infty$-norm over $ \Omega_{\Delta x} \cap \lbrack -\pi/2 , \pi/2 \rbrack^2$}{}
\end{center}
\caption{Results using stencil truncation for explicit method with $N_{\alpha} = 40$ for Problem B.\label{Tab:Linf_exp_trunc_40_B}}
\end{table}

\begin{table}[htp]\captionsetup[subfloat]{position=top}
\begin{center}
% print_table(e_Linf_const, delta_x, Nx1, 1);
\newsubtab{
%11 & 4.50e-01 & 0.00 & 2.96e-01 & 0.00 & 3.32e-01 & 0.00 & 3.32e-01 & 0.00\\
%21 & 7.31e-01 & -0.70 & 3.56e-01 & -0.27 & 3.72e-01 & -0.16 & 3.59e-01 & -0.11\\
%41 & 1.25e+00 & -0.77 & 3.79e-01 & -0.09 & 3.79e-01 & -0.03 & 3.75e-01 & -0.06\\
41 & 1.25e+00 & - & 3.79e-01 & - &         3.82e-01  & - & 3.75e-01 & -\\
81 & 1.99e+00 & -0.67 & 3.55e-01 & 0.09 &  3.55e-01  & 0.11 & 3.53e-01 & 0.09\\
161 & 3.04e+00 & -0.61 & 2.92e-01 & 0.28 &  2.92e-01 & 0.28 & 2.92e-01 & 0.27\\
321 & 4.52e+00 & -0.57 & 2.35e-01 & 0.32 &  2.35e-01 & 0.32 & 2.35e-01 & 0.31\\
641 & 6.62e+00 & -0.55 & 1.77e-01 & 0.41 &  1.77e-01 & 0.41 & 1.77e-01 & 0.41\\
}
{Error in $L^\infty$-norm over $\Omega_{\Delta x}$}{Tab:Ex1a}
% print_table(e_Linf_interior_const, delta_x, Nx1, 1);
\\\newsubtab{
%11 & 2.58e-01 & 0.00 & 1.87e-01 & 0.00 & 2.03e-01 & 0.00 & 2.03e-01 & 0.00\\
%21 & 1.14e-01 & 1.18 & 9.04e-02 & 1.05 & 8.88e-02 & 1.19 & 9.01e-02 & 1.17\\
%41 & 5.71e-02 & 1.00 & 6.38e-02 & 0.50 & 6.38e-02 & 0.48 & 6.40e-02 & 0.49\\
41 & 5.71e-02 & - & 6.38e-02 & - &         6.34e-02 & -   & 6.40e-02 & -\\
81 & 2.74e-02 & 1.06 & 5.72e-02 & 0.16 &   5.72e-02 & 0.15   & 5.68e-02 & 0.17\\
161 & 1.31e-02 & 1.06 & 4.51e-02 & 0.34 &   4.51e-02 & 0.34  & 4.49e-02 & 0.34\\
321 & 6.57e-03 & 0.99 & 3.71e-02 & 0.28 &   3.71e-02 & 0.28  & 3.70e-02 & 0.28\\
641 & 3.28e-03 & 1.00 & 2.89e-02 & 0.36 &   2.89e-02 & 0.36  & 2.88e-02 & 0.36\\
}
{Error in $L^\infty$-norm over $ \Omega_{\Delta x} \cap \lbrack -\pi/2 , \pi/2 \rbrack^2$}{Tab:Ex1b}
\end{center}
\caption{Results using constant extrapolation of the boundary condition for explicit method with $N_{\alpha} = 40$ for Problem B.\label{Tab:Linf_exp_const_40_B}}
\end{table}

\begin{table}[htp]\captionsetup[subfloat]{position=top}
\begin{center}
% print_table(e_Linf_linear, delta_x, Nx1, 1);
\newsubtab{
%11 & 2.58e-01 & 0.00 & 1.87e-01 & 0.00 & 2.03e-01 & 0.00 & 2.03e-01 & 0.00\\
%21 & 1.14e-01 & 1.18 & 7.79e-02 & 1.26 & 8.05e-02 & 1.34 & 7.85e-02 & 1.37\\
%41 & 5.71e-02 & 1.00 & 8.46e-02 & -0.12 & 8.46e-02 & -0.07 & 8.60e-02 & -0.13\\
41 & 5.71e-02 & - & 8.46e-02 & - &           8.29e-02 & -     & 8.60e-02 & - \\
81 & 3.12e-02 & 0.87 & 2.43e+02 & -11.49 &   1.82e+02 & -11.10    & 1.67e+03 & -14.25\\
161 & 2.89e-02 & 0.11 & 7.90e+18 & -54.85 &   8.95e+20 & -62.10   & 1.64e+31 & -92.99\\
321 & 2.38e-02 & 0.28 & 1.51e+70 & -170.36 &  9.26e+93 & -242.55  & 1.51e+164 & -441.69\\
641 & 1.87e-02 & 0.35 & 1.14e+207 & -454.70 & NaN & NaN           & NaN & NaN\\
}
{Error in $L^\infty$-norm over $\Omega_{\Delta x}$}{Tab:Ex1a}
% print_table(e_Linf_interior_linear, delta_x, Nx1, 1);
\\\newsubtab{
%11 & 2.58e-01 & 0.00 & 1.87e-01 & 0.00 & 2.03e-01 & 0.00 & 2.03e-01 & 0.00\\
%21 & 1.14e-01 & 1.18 & 7.79e-02 & 1.26 & 8.05e-02 & 1.34 & 7.85e-02 & 1.37\\
%41 & 5.71e-02 & 1.00 & 3.91e-02 & 0.99 & 3.91e-02 & 1.04 & 3.88e-02 & 1.02\\
41 & 5.71e-02 & - & 3.91e-02 & - &           3.95e-02 & -     & 3.88e-02 & - \\
81 & 2.74e-02 & 1.06 & 1.84e-02 & 1.09 &     1.84e-02 & 1.10     & 5.19e-02 & -0.42\\
161 & 1.31e-02 & 1.06 & 1.36e+09 & -36.10 &   1.54e+11 & -42.92  & 2.82e+21 & -75.52\\
321 & 6.57e-03 & 0.99 & 5.30e+52 & -144.81 &  3.24e+76 & -217.00 & 5.27e+146 & -416.15\\
641 & 3.28e-03 & 1.00 & 6.39e+176 & -412.19 & NaN & NaN          & NaN & NaN\\
}
{Error in $L^\infty$-norm over $ \Omega_{\Delta x} \cap \lbrack -\pi/2 , \pi/2 \rbrack^2$}{Tab:Ex1b}
\end{center}
\caption{Results using linear extrapolation for points out of the domain for explicit method with $N_{\alpha} = 40$ for Problem B.\label{Tab:Linf_exp_lin_40_B}}
\end{table}

\begin{table}[htp]\captionsetup[subfloat]{position=top}
\begin{center}
% print_table(e_Linf_imp_trunc, delta_x, Nx1, 1);
\newsubtab{
%11 & 1.34e-01 & 0.00 & 1.63e-01 & 0.00 & 1.52e-01 & 0.00 & 1.52e-01 & 0.00\\
%21 & 5.79e-02 & 1.21 & 7.20e-02 & 1.17 & 7.00e-02 & 1.12 & 7.15e-02 & 1.09\\
%41 & 3.00e-02 & 0.95 & 3.76e-02 & 0.94 & 3.76e-02 & 0.90 & 3.79e-02 & 0.92\\
41 & 3.00e-02 & - & 3.76e-02 & - &        3.72e-02 & -  & 3.79e-02 & -\\
81 & 1.40e-02 & 1.10 & 1.80e-02 & 1.06 &  1.80e-02 & 1.05  & 1.45e-02 & 1.38\\
161 & 6.34e-03 & 1.15 & 6.36e-03 & 1.50 &  6.37e-03 & 1.50 & 7.72e-03 & 0.91\\
321 & 3.04e-03 & 1.06 & 3.38e-03 & 0.91 &  3.50e-03 & 0.86 & 3.01e-03 & 1.36\\
641 & 1.53e-03 & 0.99 & 1.77e-03 & 0.93 &  1.76e-03 & 1.00 & 1.66e-03 & 0.85\\
}
{Error in $L^\infty$-norm over $\Omega_{\Delta x}$}{Tab:Ex1a}
% print_table(e_Linf_interior_imp_trunc, delta_x, Nx1, 1);
\\\newsubtab{
%11 & 1.34e-01 & 0.00 & 1.63e-01 & 0.00 & 1.52e-01 & 0.00 & 1.52e-01 & 0.00\\
%21 & 5.79e-02 & 1.21 & 7.20e-02 & 1.17 & 7.00e-02 & 1.12 & 7.15e-02 & 1.09\\
%41 & 3.00e-02 & 0.95 & 3.76e-02 & 0.94 & 3.76e-02 & 0.90 & 3.79e-02 & 0.92\\
41 & 3.00e-02 & - & 3.76e-02 & - &       3.72e-02 & -  & 3.79e-02 & -\\
81 & 1.40e-02 & 1.10 & 1.80e-02 & 1.06 & 1.80e-02 & 1.05  & 1.45e-02 & 1.38\\
161 & 6.34e-03 & 1.15 & 6.31e-03 & 1.51 & 6.37e-03 & 1.50 & 7.72e-03 & 0.91\\
321 & 3.04e-03 & 1.06 & 3.38e-03 & 0.90 & 3.50e-03 & 0.86 & 3.01e-03 & 1.36\\
641 & 1.53e-03 & 0.99 & 1.77e-03 & 0.93 & 1.76e-03 & 1.00 & 1.66e-03 & 0.85\\
}
{Error in $L^\infty$-norm over $\Omega_{\Delta x} \cap \lbrack -\pi/2 , \pi/2 \rbrack^2$}{Tab:Ex1b}
\end{center}
\caption{Results using truncation for points out of the domain for implicit method with $N_{\alpha} = 40$ for Problem B.\label{Tab:Linf_imp_trunc_40_B}}
\end{table}

%\begin{table}[htp]
%\begin{center}
%\begin{tabular}{c|ccc|ccc}
%\multirow{2}{*}{$\ell$}&\multicolumn{3}{c|}{$VG(1, 1)$}&\multicolumn{3}{c}{$V(1, 1)$}\\ 
%\cline{2-7}
%&$\rho$&$c_G$&$c_A$&$\rho$&$c_G$&$c_A$\\\hline
%6 & 0.1265  & 1.31 & 1.53 & 0.1659 & 1.31 & 1.30 \\
%7 & 0.1295  & 1.32 & 1.56 & 0.1643 & 1.32 & 1.31 \\
%8 & 0.1297  & 1.33 & 1.59 & 0.1617 & 1.33 & 1.33 \\
%9 & 0.1294  & 1.33 & 1.59 & 0.1603 & 1.33 & 1.33 \\
%10 & 0.1289 & 1.33 & 1.59 & 0.1576 & 1.33 & 1.33 \\
%\end{tabular}
%\end{center}
%\caption{Residual reduction and complexities for two geometric multigrid cycles of the standard fixed stencil finite difference discretization of the 2D Laplacian.
%For the cycle $VG(1, 1)$, the coarse grid operator is built algebraically using the Galerkin principle, whereas for $V(1, 1)$ we use the standard finite difference discretization on that grid.
%\label{Tab:compMGFixSt}}
%\end{table}
%
%

% & SOLVER TIME	Direct	AMG	AGMG

\begin{table}
{\small
\begin{tabular}{c|c|cc|cc|cc}
\multirow{2}{*}{}  &\multirow{2}{*}{$N_x$} & \multicolumn{2}{c|}{Direct} & \multicolumn{2}{c|}{AMG} & \multicolumn{2}{c}{AGMG} \\ \cline{3-8}
&  & $\Delta t = T$ &	$\Delta t = \Delta x$ &	$\Delta t = T$ &	$\Delta t = \Delta x$ &	$\Delta t = T$ &	$\Delta t = \Delta x$ \\ \hline
\multirow{3}{*}{Problem A}&	161 &	44.17\% &	69.93\% &	64.21\% &	75.42\% &	12.40\% &	7.63\% \\
& 321	& 53.34\% &	82.36\% &	69.24\% &	77.24\% &	15.09\% &	10.82\% \\
& 641	& 78.01\% &	85.91\% &	34.20\% &	67.47\% &	5.53\% &	10.37\% \\ \hline
\multirow{3}{*}{Problem B}	 & 161 &	8.46\% &	80.71\% &	49.16\% &	79.35\% &	7.27\% &	8.92\% \\
& 321 &	26.09\% &	94.36\% &	77.48\% &	76.28\% &	12.88\% &	8.83\% \\
& 641 &	95.25\% &	97.65\% &	98.64\% &	87.68\% &	2.48\% &	17.06\%
\end{tabular}
}
\caption{Percentage of computational time spent in linear solvers for the Examples in Section \ref{subsec:perform}.}
\label{Tab:solverpercentage}
\end{table}

\end{document}